\newtheorem{thm}{Theorem}[section]
\newtheorem{cor}[thm]{Corollary}
\newtheorem{prop}[thm]{Proposition}
\newtheorem{lem}[thm]{Lemma}
\newtheorem{conj}[thm]{Conjecture}
\newtheorem{quest}[thm]{Question}
\theoremstyle{definition}
\newtheorem{defn}[thm]{Definition}
\newtheorem{exmp}[thm]{Example}
\newtheorem*{exer*}{Exercise}
\theoremstyle{remark}
\newtheorem{rmk}[thm]{Remark}
\newcommand{\p}{\partial}
\newcommand{\pbar}{\overline{\partial}}
\newcommand{\C}{\mathbb C}
\newcommand{\Q}{\mathbb Q}
\newcommand{\R}{\mathbb R}
\newcommand{\CP}{\mathbb{CP}}
\newcommand{\e}{\varepsilon}
\newcommand{\vp}{\varphi}
\newcommand{\vol}{\mathrm{vol}}
\providecommand{\abs}[1]{\left\lvert#1\right\rvert}
\providecommand{\norm}[1]{\left\lVert#1\right\rVert}
\newcommand{\ov}[1]{\overline{#1}}
\newcommand{\PSH}{\mathrm{PSH}}
\newcommand{\ti}[1]{\widetilde{#1}}
\let\c@equation\c@thm
\numberwithin{equation}{section}
\title{Plurisupported Currents on Compact K\"ahler Manifolds}
\author{Nicholas McCleerey}
\begin{document}

\begin{abstract}
Let $X$ be a compact K\"ahler manifold. We study plurisupported currents on $X$, i.e. closed, positive $(1,1)$-currents which are supported on a pluripolar set. In particular, we are able present a technical generalization of Witt-Nystr\"om's proof of the BDPP conjecture on projective manifolds, showing that this conjecture holds on $X$ admitting at least one plurisupported current $T$ such that $[T]$ is K\"ahler.

One of the steps in our proof is to show an upper-bound for the pluripolar mass of certain envelopes of quasi-psh functions when the cohomology class is shifted, a result of independent interest. Using this, we are able to generalize an inequality of McKinnon and Roth to arbitrary pseudoeffective classes on compact K\"ahler manifolds.
\end{abstract}

\maketitle

\section{Introduction}

Suppose that $(X^n, \omega)$ is a closed, compact K\"ahler manifold of complex dimension $n$. It is a well known phenomena that a large number of algebro-geometric questions can be formulated and solved on $X$ by replacing divisors on $X$ with suitable closed positive currents $T$, trading algebraic techniques for analytic ones.

A particular, very explicit example of this is the replacement of a divisor $D$ with its current of integration, $\llbracket D\rrbracket$. From the analytic point of view, $\llbracket D\rrbracket$ is quite singular, being supported on a set of (real) Hausdorff dimension $2n - 2$. In this case, our ability to make statements about or using $\llbracket D\rrbracket$ relies essentially on the underlying (smooth) structure of the supporting set itself -- as is well known, $D$ will be an analytic manifold outside a proper set of (real) Hausdorff dimension $2n - 4$. This allows wide latitude to work with $\llbracket D\rrbracket$ by restricting to the smooth part of $D$, among other things, and results in strong rigidity theorems for currents which charge divisors, e.g. the various support theorems (\cite{Fed, Dem7}) and the Siu Decomposition \cite{Siu1}.

The starting point of the present work is the observation that one can work with a much larger class of objects by requiring only that the support of the current be pluripolar. We make the following definition:

\begin{defn}
We say a closed, positive current $T$, of bidegree $(1,1)$, is {\bf plurisupported} if $T$ is supported on a pluripolar set.
\end{defn}

Recall that, on a closed manifold $X$, we say a set $A$ is pluripolar if there exists a quasi-psh function $\vp$ on $X$ such that $A \subseteq \{\vp = -\infty\}$.

It is an immediate result of Bedford-Taylor theory \cite{BT2, BT3} (see also \cite{BEGZ}, right after (1.2)), Federer's Support theorem \cite[Thm. 4.1.20]{Fed} (see also \cite[Thm. 2.4.2]{King}), and basics of plurisubharmonic functions (c.f. \cite{GZ_book}), that if $T$ is plurisupported and $T = \theta + i\p\pbar\vp$ for some smooth, closed, real $(1,1)$-form $\theta$ with $\vp\in\PSH(X,\theta)$, then $\mathrm{Supp}(T)\subseteq \{\vp = -\infty\}$ and the $(2n-2)$-Hausdorff measure of $\mathrm{Supp}(T)$ is non-zero. If one assumes even moderately stronger conditions than this however, such currents usually end up being currents of integration along divisors:

\begin{prop}[Dinh-Lawrence \cite{DinhLawr}, Theorem 4.6, Corollary 4.7]\label{eyy}
Suppose that $T$ is a closed positive $(1,1)$-current on a closed complex manifold $X$ of complex dimension $\geq 2$. If the support of $T$ has locally finite $(2n-2)$-Hausdorff measure outside of some Stein domain, then $T$ is the current of integration along an effective $\R$-divisor.
\end{prop}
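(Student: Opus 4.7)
The strategy is to apply Siu's decomposition theorem to isolate the divisorial part of $T$, then show that the residual part $R$ vanishes by combining Federer's support theorem (outside the Stein domain) with the Stein hypothesis itself (inside). By Siu, write $T = \sum_k \lambda_k \llbracket D_k\rrbracket + R$ where the $D_k$ are distinct irreducible analytic hypersurfaces of $X$ with multiplicities $\lambda_k > 0$, and $R$ is a closed positive $(1,1)$-current whose positive Lelong upper-level sets $E_c(R) = \{\, x : \nu(R,x) \ge c \,\}$ are analytic of complex dimension at most $n-2$ for every $c > 0$ (so $R$ carries no divisorial Lelong masses). Because the sum is locally finite and $X$ is compact, only finitely many $D_k$ appear, so $D := \sum_k \lambda_k D_k$ is already an effective $\R$-divisor; the task reduces to showing $R \equiv 0$.

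Set $V := X \setminus \overline U$. By hypothesis, $\mathrm{Supp}(T) \cap V$ has locally finite $(2n-2)$-Hausdorff measure, and therefore so does $\mathrm{Supp}(R) \cap V$. Applying Federer's support theorem together with the King/Harvey--Shiffman analyticity criterion --- exactly the combination invoked in the paragraph preceding the proposition --- any closed positive $(1,1)$-current whose support has locally finite $\mathcal H^{2n-2}$-measure must be a locally finite sum $\sum_j c_j \llbracket W_j \rrbracket$ of currents of integration over irreducible analytic hypersurfaces $W_j$. Applied to $R|_V$, any nonzero term would produce divisorial Lelong masses for $R$, contradicting the defining property of the residual part. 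Hence $R|_V = 0$, i.e.\ $\mathrm{Supp}(R) \subseteq \overline U$.

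The heart of the argument, and what I expect to be the main obstacle, is upgrading this to $R \equiv 0$ globally using only the Stein hypothesis. The guiding principle is that a nontrivial closed positive $(1,1)$-current cannot be supported in the closure of a Stein open subset of a compact complex manifold. Letting $\rho : U \to \R$ be a smooth strictly psh exhaustion and $U_c := \{\rho < c\} \Subset U$, the identity $(i\p\pbar\rho)^{n-1} = d\bigl[i\pbar\rho \wedge (i\p\pbar\rho)^{n-2}\bigr]$ combined with closedness of $R$ gives, via Stokes,
\[
0 \;\le\; \int_{U_c} R \wedge (i\p\pbar\rho)^{n-1} \;=\; \int_{\partial U_c} R \wedge \bigl[ i\pbar\rho \wedge (i\p\pbar\rho)^{n-2} \bigr].
\]
The left-hand side is a nondecreasing function of $c$ capturing the mass of $R|_U$ against a strictly positive form, while a careful analysis of the boundary term --- exploiting that $R$ extends by zero across $\partial U$ as a closed positive current on $X$, so that any local quasi-psh potential for $R$ near $\partial U$ must be pluriharmonic from the $V$-side --- should force the boundary integral to vanish as $c \to \infty$. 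This would yield $R|_U = 0$, and combined with the previous step, $R \equiv 0$ on $X$. Then $T = \sum_k \lambda_k \llbracket D_k \rrbracket$ is the current of integration along the effective $\R$-divisor $D$, as desired.
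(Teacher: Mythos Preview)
The paper does not supply its own proof of this proposition --- it is quoted as a background result of Dinh--Lawrence \cite{DinhLawr}. So there is no in-paper argument to compare against; the relevant benchmark is the original proof in \cite{DinhLawr}.

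Your reduction through step 3 is fine: Siu's decomposition plus the Federer/King support theorem on $V = X \setminus \overline U$ correctly forces the residual $R$ to be supported in $\overline U$. The genuine gap is step 4, and you already flag it as the ``main obstacle''; I want to explain why the sketch you give does not close it.

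First, the Stokes identity $(i\p\pbar\rho)^{n-1} = d\bigl[i\pbar\rho \wedge (i\p\pbar\rho)^{n-2}\bigr]$ is correct, but the resulting boundary term $\int_{\partial U_c} R \wedge \bigl[i\pbar\rho \wedge (i\p\pbar\rho)^{n-2}\bigr]$ has no a priori meaning for a general closed positive current $R$ --- restriction of a current to a real hypersurface is not automatically defined. Second, and more seriously, your heuristic that a local potential of $R$ is ``pluriharmonic from the $V$-side'' gives no control over mass that $R$ may carry on $\partial U$ itself. Nothing in the outline excludes $R$ being concentrated on $\partial U$, and in that regime the boundary contributions are exactly where all the mass sits; there is no mechanism forcing them to vanish as $c \to \infty$.

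The principle you are implicitly invoking --- that a closed positive $(1,1)$-current with compact support in a Stein manifold vanishes --- is correct and does follow from a clean integration-by-parts, but it applies to $R$ only if $\overline U$ lies inside a \emph{larger} Stein open set $W$, so that $R$ genuinely has compact support in $W$. For an arbitrary Stein domain in a compact manifold no such Stein neighborhood is guaranteed, and your argument does not bridge the gap between ``supported in $\overline U$'' and ``compactly supported in some Stein manifold.''

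The proof in \cite{DinhLawr} takes a different route, based on polynomial/holomorphic hulls: one shows that the analytic hypersurface structure detected outside $U$ by the support theorem propagates across $U$ via holomorphic convexity and extension of analytic sets, rather than by trying to kill a residual current. That approach avoids the boundary-mass issue entirely.
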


It is known that not all plurisupported currents are currents of integration. For instance, one can construct explicit examples on $\C$ by looking at the Evan's potential of certain generalized Cantor sets, e.g. Example \ref{not_irred} (c.f. \cite{Rans, Car}) -- these can then be pulled back and pushed forward to provide examples on any projective manifold, as detailed in \cite[Section 3]{ThaiVu}, for instance. Another example comes from a well-known construction due to Wermer \cite{Werm}, which produces a plurisupported current on $\CP^2$ whose support has no ``analytic structure" in the unit ball \cite[Example 6.1]{DuvSib} (see also \cite{Lev, LevSlod}).

While the underlying supporting sets of plurisupported currents may be rather extreme, the currents themselves are very regular in a potential theoretic sense, allowing us to generalize many classical properties of divisors to plurisupported currents. In particular, we will make repeated use of the following result, originally due to Demailly; the version we use can be found in \cite[Lemma 4.1]{ACCP}:

\begin{lem}\label{subtraction}
Let $X^n$ be a compact K\"ahler manifold, and suppose that $T, S$ are closed, positive $(1,1)$-currents on $X$. Suppose that $S := \beta + i\p\pbar\psi$ is plurisupported and let $T := \alpha + i\p\pbar\vp$.

If $\vp$ is more singular than $\psi$, then $\vp - \psi$ is an $(\alpha - \beta)$-psh function on $X$.
\end{lem}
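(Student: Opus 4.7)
The plan is to recast the lemma as an extension problem across the closed pluripolar set $\mathrm{Supp}(S)$. Setting $u := \vp - \psi$, the conclusion amounts to showing that $u$ extends to an $(\alpha - \beta)$-psh function on $X$. The key formal identity is
$$(\alpha - \beta) + i\p\pbar u = T - S,$$
and on the open set $U := X \setminus \mathrm{Supp}(S)$, where $S$ vanishes as a current, this reads $(\alpha - \beta) + i\p\pbar u = T \geq 0$. Since $T$ is positive, the right-hand side is a nonnegative current, so $u$ will be $(\alpha-\beta)$-psh on $U$ once it is confirmed to be a sensible usc function there.

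To that end, I would first show that $\psi$ is smooth on $U$. The current identity $i\p\pbar \psi = -\beta$ holds on $U$; taking the trace with $\omega$ yields a Poisson equation $\Delta_\omega \psi = -\mathrm{tr}_\omega \beta$ with smooth right-hand side, and elliptic regularity then gives $\psi \in C^\infty(U)$. (Alternatively, choose smooth local potentials for $\beta$ and observe that $\psi$ differs from them by a pluriharmonic function.) It follows that $u = \vp - \psi$ is usc on $U$ (as the difference of a usc and a smooth function) and $(\alpha - \beta)$-psh there. Moreover, the hypothesis that $\vp$ is more singular than $\psi$ gives a global upper bound $\vp \leq \psi + C$, which transfers to $u \leq C$ on $U$.

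The final step is to extend $u$ across $X \setminus U = \mathrm{Supp}(S)$. This set is closed, and it sits inside $\{\psi = -\infty\}$, so it is pluripolar. I would then invoke the classical removable-singularities theorem for (quasi-)psh functions, which, via local potentials for $\alpha - \beta$, reduces to the standard Grauert-Remmert-type statement on domains in $\C^n$: a $\theta$-psh function on the complement of a closed pluripolar set that is locally bounded above extends uniquely (as its usc regularization) to a $\theta$-psh function on the whole manifold. Applied to our $u$ on $U$, this yields the desired $(\alpha - \beta)$-psh extension. The only genuine obstacle in the whole argument -- identifying $X \setminus U$ as pluripolar so that removable singularities applies -- is supplied at no cost by the plurisupported hypothesis on $S$.
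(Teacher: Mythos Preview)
Your argument is correct. The paper does not actually supply its own proof of this lemma: it attributes the result to Demailly, cites \cite[Lemma 4.1]{ACCP} for the version used, and remarks that an earlier preprint contained a proof via plurifine potential theory which was removed as redundant. Your approach---showing that $\psi$ is smooth off the closed pluripolar set $\mathrm{Supp}(S)$, so that $\vp-\psi$ is $(\alpha-\beta)$-psh and bounded above there, and then extending across $\mathrm{Supp}(S)$ by the classical removable-singularities theorem for quasi-psh functions---is precisely the standard elementary argument, and is in the same spirit as the cited proof in \cite{ACCP}. One small point worth making explicit: you use that $\mathrm{Supp}(S)\subseteq\{\psi=-\infty\}$, which the paper records just after Definition~1.1 as a consequence of Bedford--Taylor theory; combined with your observation that $\psi$ is smooth on $U$, this gives $\mathrm{Supp}(S)=\{\psi=-\infty\}$, so the extension really does recover $\vp-\psi$ wherever that difference is defined.
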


Our original preprint provided a simple proof of Lemma \ref{subtraction} using plurifine potential theory -- however, since this lemma was already known with a simple proof not utilizing this theory, we've removed this proof. We thank Duc-Viet Vu for pointing this out to us.

Using Lemma \ref{subtraction}, it makes sense to define a generalized Siu decomposition (which we call the pluripolar Siu decomposition (Defn. \ref{pSiuDecom_curr})), given by decomposing a closed positive current $T$ into its plurisupported and non-pluripolar components, as in \cite{BEGZ}. If $T = \theta_\vp$, then Lemma \ref{subtraction} allows us to extend this to a decomposition of the singularity type $[\vp]$ as well (Defn. \ref{pSiuDecom_singtype}). The most notable difference between the pluripolar and the classical Siu decompositions comes from the fact that a general plurisupported current cannot be decomposed into a sum of irreducible components (Example \ref{not_irred}), even though there exists a natural notion of an irreducible plurisupported current (which ends up agreeing the notion of an extremal current (Defn. \ref{irred})).

Lemma \ref{subtraction} allows us to prove the existence of maxima and minima of plurisupported currents (see subsection \ref{subsect_maxmin}). The existence of minima in particular allows us to define a natural mass norm on the space of differences of plurisupported currents (it is essentially the total variation of the difference), which turns this space into a Banach space (Theorem \ref{Banach}) -- compare this, for instance, to \cite{CegWik}, which uses the Monge-Amp{\`e}re operator to turn the local space $\mathscr{F}(\Omega)$ into a Banach space, and shows that this topology need not be equivalent to the topology induced by the Laplacian (which agrees with the norm studied here when restricted to the set of plurisupport potentials). See also \cite{AhagCegCzyz}, for instance. 

Theorem \ref{Banach} is shown by comparing the metric induced by our mass norm with the $d_\mathcal{S}$ metric of \cite{DDL4}, and showing that they are basically equivalent; in particular, we thus see that, in this case, $d_\mathcal{S}$-convergence provides an analogue for potentials of the convergence in total variation of measures, affirming its apparent position as the strongest known type of convergence for quasi-psh functions; compare this to the results on convergence in capacity in \cite{HaiHiep, Czyz, HawZaw, KarNguTru}.

\begin{thm}\label{converge_intro}
(Theorem \ref{converge}) Suppose that a sequence $\vp_i\in \PSH(X,\omega)$, with $\int_X \langle \omega_{\vp_i}^n\rangle \geq \delta > 0$, $d_\mathcal{S}$-converges to some $\vp\in\PSH(X,\omega)$ as $i\rightarrow\infty$. Then the singular parts of the pluripolar Siu decomposition $\llbracket \omega_{\vp_i}\rrbracket$ converge strongly to $\llbracket \omega_{\vp}\rrbracket$ as $i\rightarrow\infty$.
\end{thm}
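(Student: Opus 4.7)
The plan is to reduce the theorem to a statement about $d_\mathcal{S}$-convergence of the singular envelopes $P[\vp_i]$ and then invoke Theorem \ref{Banach}. By the pluripolar Siu decomposition (Defn. \ref{pSiuDecom_curr}), write $\omega_{\vp_i} = N_i + \llbracket\omega_{\vp_i}\rrbracket$ with $N_i$ non-pluripolar and $\llbracket\omega_{\vp_i}\rrbracket$ plurisupported, and similarly $\omega_\vp = N + \llbracket\omega_\vp\rrbracket$. Lemma \ref{subtraction} lets us promote this to a decomposition at the level of singularity types (Defn. \ref{pSiuDecom_singtype}), where the less singular part has the singularity type of a potential carrying all of the non-pluripolar mass. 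Under the hypothesis $\int_X\langle\omega_{\vp_i}^n\rangle \geq \delta > 0$, this is precisely the singularity type of the DDL model envelope $P[\vp_i]$, so everything is set up in the regime of \cite{DDL4}.

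The first substantive step is to invoke $d_\mathcal{S}$-continuity of the envelope assignment $\vp\mapsto P[\vp]$ from \cite{DDL4}: $d_\mathcal{S}(\vp_i,\vp)\to 0$ forces $d_\mathcal{S}(P[\vp_i], P[\vp])\to 0$, i.e.\ the less singular components converge in $d_\mathcal{S}$. A singularity-type subtraction argument, using Lemma \ref{subtraction} to realize the plurisupported components as honest differences of quasi-psh potentials in the appropriate shifted class, then produces $d_\mathcal{S}$-convergence of those plurisupported components. Finally, Theorem \ref{Banach} --- which identifies the mass norm on plurisupported currents with (the restriction of) $d_\mathcal{S}$ --- converts this into strong convergence $\llbracket\omega_{\vp_i}\rrbracket \to \llbracket\omega_\vp\rrbracket$ in the Banach norm on plurisupported currents, which is exactly the claim.

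The principal obstacle is the middle step: extracting $d_\mathcal{S}$-convergence of the plurisupported parts from $d_\mathcal{S}$-convergence of both the full potentials and their less singular models. This is delicate because $d_\mathcal{S}$ is defined via non-pluripolar Monge-Amp\`ere products, which do not subtract linearly, while the singularity-type subtraction is only well-defined \emph{a posteriori} through Lemma \ref{subtraction}. Quantitative control here depends essentially on the lower mass bound $\delta > 0$, which prevents the non-pluripolar mass from collapsing along the sequence and keeps all the Monge-Amp\`ere products non-degenerate; without it, $d_\mathcal{S}$-small perturbations of $\vp_i$ could produce large rearrangements of the plurisupported part. I expect to effect this step by combining the minimum/maximum operations on plurisupported currents from subsection \ref{subsect_maxmin} --- which are the natural operations stable under both the mass norm and $d_\mathcal{S}$ --- with the sup-type stability of $d_\mathcal{S}$ established in \cite{DDL4}, so as to bound $\|\llbracket\omega_{\vp_i}\rrbracket - \llbracket\omega_\vp\rrbracket\|$ directly in terms of $d_\mathcal{S}(\vp_i,\vp)$ with constants depending only on $\delta$.
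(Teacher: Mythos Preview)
Your proposal has two concrete problems, one logical and one substantive.

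First, the logical one: you invoke Theorem \ref{Banach} to convert $d_\mathcal{S}$-convergence of the plurisupported parts into strong convergence. But in the paper, Theorem \ref{Banach} is proved \emph{using} Theorem \ref{converge} (the very statement you are trying to prove) --- look at the last paragraph of its proof. So this is circular. What you actually need for the final conversion is just Proposition \ref{strong} (norm convergence implies strong convergence), and before that you need to produce norm convergence directly, not $d_\mathcal{S}$-convergence of the plurisupported pieces.

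Second, and more seriously, your ``middle step'' is not just delicate --- as written it does not go through. You propose to obtain $d_\mathcal{S}$-convergence of the plurisupported components by a subtraction argument: $\vp_i \to \vp$ and $P[\vp_i] \to P[\vp]$ in $d_\mathcal{S}$, hence the differences converge. But $d_\mathcal{S}$ is not a norm on a vector space, so there is no subtraction principle available. Moreover, the only tool in the paper that passes from $d_\mathcal{S}$-smallness to smallness in the mass norm is Proposition \ref{equivalent}\,(\ref{equivalent_2}), and that inequality requires the two singularity types to be \emph{comparable} ($[\vp] \leq [\psi]$ or $[\psi] \leq [\vp]$). For a general $d_\mathcal{S}$-convergent sequence there is no reason the $\llbracket \vp_i\rrbracket$ are comparable to $\llbracket \vp\rrbracket$, so you cannot apply it.

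The paper's proof handles exactly this by first passing to an arbitrary subsequence and then invoking \cite[Thm. 5.6]{DDL4} to sandwich $\vp_i$ between a $d_\mathcal{S}$-convergent \emph{increasing} sequence $w_i$ and a \emph{decreasing} sequence $v_i$. Monotonicity forces $\llbracket w_i\rrbracket \geq \llbracket \vp\rrbracket$ and $\llbracket v_i\rrbracket \leq \llbracket \vp\rrbracket$ (Proposition \ref{sing_ordering}), so the norm reduces to a single mass integral difference $\int_X (T_i - T)\wedge\omega^{n-1}$, which \cite[Lemma 3.7]{DDL4} controls by $d_\mathcal{S}$. Then a triangle inequality squeezes $\llbracket \vp_i\rrbracket$. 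This monotone sandwich is the missing idea in your outline; your proposed use of the max/min operations from subsection \ref{subsect_maxmin} does not supply the needed ordering between $\llbracket \vp_i\rrbracket$ and $\llbracket \vp\rrbracket$.
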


See subsection \ref{subsect_Banach} for more specific results.

The results in Section 2 suggest a number of interesting follow up questions. For currents of integration, the classical line of thinking is that the rigidity of the currents stems from the rigidity of the underlying divisor. One would like to know if a similar result holds for plurisupported currents:

\begin{conj}\label{conjecture}
Suppose that $T$ and $S$ are two essentially disjoint (Defn. \ref{essdisj}), plurisupported currents such that $\mathrm{Supp}(T) = \mathrm{Supp}(S)$. Then either $T = 0$ or $S = 0$.
\end{conj}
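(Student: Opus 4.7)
The plan is to argue by contradiction. Assume both $T \ne 0$ and $S \ne 0$; write $T = \alpha + i\p\pbar\vp$ with $\vp\in\PSH(X,\alpha)$ and $S = \beta + i\p\pbar\psi$ with $\psi\in\PSH(X,\beta)$, and set $E := \mathrm{Supp}(T) = \mathrm{Supp}(S)$. The goal is to produce a nonzero plurisupported current $R$ with $R\le T$ and $R\le S$, directly contradicting essential disjointness (Defn.~\ref{essdisj}).

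The construction of $R$ would combine Lemma \ref{subtraction} with the min operation on plurisupported currents from subsection \ref{subsect_maxmin}. The idea is to extract a common ``singular envelope'' of $\vp$ and $\psi$: after rescaling (and shifting cohomology classes using pseudoeffectiveness), consider
\[
u := \bigl(\sup\{v\in\PSH(X,\alpha):\; v\le\vp,\ v\le\psi + C\text{ for some }C\in\R\}\bigr)^{*}.
\]
Standard envelope theory gives $u \in \PSH(X,\alpha)$. Applying Lemma \ref{subtraction} symmetrically in $\vp$ and $\psi$, the associated current $R:=\alpha+i\p\pbar u$ should be plurisupported on $E$ and dominated by both $T$ and $S$, so that essential disjointness forces $R=0$.

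The decisive step is then to show $R \ne 0$, which will produce the contradiction. Equality of supports should play the key role here: both $\vp$ and $\psi$ diverge to $-\infty$ on dense subsets of $E$, so the family of candidate potentials in the supremum defining $u$ is nontrivially singular along $E$, and one expects this to translate into nontrivial mass for $R$. Concretely, one would try to exhibit a single candidate $v$ strictly less than $0$ on a set of positive capacity, e.g. by taking $v$ to be an Evans-type potential of a well-chosen polar subset of $E$ on which both $\vp$ and $\psi$ are uniformly very negative.

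The principal obstacle is precisely this last step. Unlike in the divisorial setting -- where each irreducible component carries an intrinsic multiplicity, so that equality of supports automatically forces shared components -- plurisupported currents can fail to admit any irreducible decomposition (Example \ref{not_irred}), and two quasi-psh functions with identical polar sets can be genuinely incomparable near every point: on disjoint dense subsets, $\vp\ll\psi$ and $\psi\ll\vp$ can both happen. Successfully executing the envelope construction therefore appears to require a new rigidity statement for plurisupported currents -- in spirit, a form of unique continuation along the support, or the identification of a new ``multiplicity-type'' invariant of a closed pluripolar set intrinsic enough to detect $\mathrm{Supp}(T)=\mathrm{Supp}(S)$. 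Given the wildness of possible supports (cf.\ the Wermer and generalized Cantor examples), this is where I expect the real difficulty of the conjecture to lie.
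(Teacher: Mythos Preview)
The statement you are attempting to prove is labeled a \emph{Conjecture} in the paper and is presented as open; the paper gives no proof of it, so there is nothing to compare your argument against. The relevant question is therefore whether your proposal actually constitutes a proof.

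It does not, and you essentially acknowledge this yourself in the final paragraph. But there is also a structural issue you should be aware of. By the discussion in subsection~\ref{subsect_maxmin}, the largest plurisupported current dominated by both $T$ and $S$ is precisely the plurisupported representative of $\llbracket \max\{\vp,\psi\}\rrbracket$, and essential disjointness (Defn.~\ref{essdisj}) is \emph{defined} to be the vanishing of this object. Thus ``find a nonzero plurisupported $R$ with $R\le T$ and $R\le S$'' is not a difficult step awaiting a clever construction---it is tautologically equivalent to negating essential disjointness. Any argument along these lines must therefore convert the hypothesis $\mathrm{Supp}(T)=\mathrm{Supp}(S)$ into such an $R$ by some genuinely new mechanism, which is exactly the content of the conjecture and exactly what you concede you cannot do.

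There is also a sign issue in your envelope. Your $u$ satisfies $u\le\vp$, so $[\vp]\le[u]$ in the paper's convention. By Corollary~\ref{ordering} (applied with the plurisupported current being $T$) this yields $T\le R$, not $R\le T$; your construction produces a current \emph{larger} than $T$, not smaller. More concretely, under essential disjointness Proposition~\ref{maximum} gives $[P(\min\{\vp,\psi\})]=[\vp+\psi]$, so the natural envelope of the pair has singularity type $[\vp]+[\psi]$ and corresponds to $T+S$, again on the wrong side of the desired inequality.
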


A variant of this question has been asked before by Slodkowski \cite{Slod}, in relation to extremal currents. There are many examples of extremal currents which are not plurisupported, e.g. many fractal sets from dynamics support extremal currents, and these generally turn out to have continuous potentials.

Our main motivation in proposing Conjecture \ref{conjecture} is actually to study intersections of supports of essentially disjoint plurisupported currents, and probe how similar they are to intersections of divisors -- a clear first step is to rule out the situation in Conjecture \ref{conjecture}. Optimistically, one hopes that such intersections should have Hausdorff dimension $< 2n -2$, which would imply, for instance, the reverse direction of Theorem \ref{converge_intro} -- see Remark \ref{MeStupid}. It does not seem reasonable to expect a similar result for extremal currents.\\
\newline

Aside from studying plurisupported currents on their own, we provide an application of their existence to the well-known conjecture of Boucksom--Demailly--Pa\u{u}n--Peternell \cite[Conj. 10.1]{BDPP} on the duality between the pseudoeffective and movable cones, in a manner analogous to the recent, celebrated paper of Witt-Nystr\"om \cite{WN1} (indeed, upon reading \cite{WN1}, one is naturally lead to wonder if such currents can be substituted for divisors in the main argument). Our main result in this direction is the following:

\begin{thm}\label{kookoo}
(Theorem \ref{crank}) Let $Y$ be a compact K\"ahler manifold, and suppose that $\mu: X\rightarrow Y$ is a proper modification such that $X$ admits a plurisupported current $T$ with $[T]$ a K\"ahler class. Then \cite[Conj. 10.1]{BDPP} is true on $Y$.
\end{thm}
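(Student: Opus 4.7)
The plan is to follow the strategy of Witt-Nystr\"om's proof of the projective BDPP conjecture in \cite{WN1}, using the plurisupported K\"ahler current $T$ on $X$ as a transcendental substitute for the ample $\Q$-divisor exploited there.

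The first step is a reduction from $Y$ to $X$: under a proper modification $\mu : X \to Y$, pseudoeffective classes on $Y$ correspond to their pullbacks on $X$, and movable classes on $X$ push forward to movable classes on $Y$, in a manner compatible with intersection pairings. A standard cone-theoretic argument then shows that if the duality part of \cite[Conj. 10.1]{BDPP} holds on $X$, it holds on $Y$: given a class $\alpha$ on $Y$ in the dual of movable, for any movable $\beta'$ on $X$ we have $\mu^*\alpha\cdot\beta' = \alpha\cdot\mu_*\beta' \geq 0$, so $\mu^*\alpha$ is pseudoeffective on $X$ by BDPP there, and pushing forward recovers $\alpha$. It therefore suffices to establish the conjecture on $X$ itself.

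On $X$, write $T = \beta_T + i\p\pbar\psi$ with $[\beta_T]$ K\"ahler and $\{\psi = -\infty\}$ pluripolar. The potential $\psi$ will play the role that sections of an ample line bundle play in Witt-Nystr\"om's argument: given any candidate class $\alpha$, one studies envelopes of quasi-psh functions in the classes $\alpha + \e[\beta_T]$ as $\e \to 0^+$. Lemma \ref{subtraction} lets us subtract $\psi$ from more singular potentials and remain in the correct cohomology class, which is the analytic replacement for taking sections vanishing to high order along an ample divisor. The upper bound on the pluripolar mass of these envelopes under shift of the class (the result of independent interest highlighted in the abstract) then plays the role of Witt-Nystr\"om's asymptotic section count, and, combined with the generalized McKinnon--Roth inequality, yields the required intersection estimate. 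The pluripolar Siu decomposition (Defn.~\ref{pSiuDecom_curr}, Defn.~\ref{pSiuDecom_singtype}) organizes the book-keeping between the plurisupported part of these envelopes (controlled by $T$) and the non-pluripolar part (controlled by the Kähler shift). Running Witt-Nystr\"om's positive intersection argument in this setting then gives the duality of the pseudoeffective and movable cones on $X$.

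The main obstacle I expect is that \cite{WN1} invokes at several points the algebraic nature of the projective setting---sections of line bundles, bounded multiplicities, Okounkov-body estimates---and one must verify that the plurisupport hypothesis on $T$ provides enough singular structure to emulate each of these arguments analytically. In particular, a delicate technical point is ensuring the envelope mass bound is compatible with the positive intersection products along the sequence $\e \to 0$, so that the degeneration of the envelopes yields a genuine movable class pairing nontrivially with $\alpha$ whenever $\alpha$ is not pseudoeffective; this is precisely where the analytic properties of plurisupported currents developed in the earlier sections (subtraction, maxima and minima, the total-variation Banach structure of Theorem \ref{Banach}) are expected to substitute for the missing algebraic input.
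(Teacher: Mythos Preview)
Your reduction from $Y$ to $X$ is reasonable (and essentially corresponds to taking $\mu=\mathrm{id}$ in the paper's statement), but the core of your argument on $X$ misidentifies the mechanism. The proof does \emph{not} use the McKinnon--Roth inequality, the Banach structure of Theorem~\ref{Banach}, or a family of envelopes in $\alpha + \e[\beta_T]$ as $\e\to 0^+$; those are separate applications of the mass bound, not inputs to BDPP. What is actually needed is a two-step argument modeled on the \emph{appendix} of \cite{WN1}: first, a weak transcendental Morse inequality
\[
\vol(\alpha - P(\beta)) \;\geq\; \vol(\alpha) - n\,\langle\alpha^{n-1}\rangle\cdot P(\beta)
\]
whenever $[\beta]$ is big and plurisupported (this is Theorem~\ref{WN_MN}, proved by combining the mass estimate of Theorem~\ref{ppbound_intro} with the transfer lemma, Theorem~\ref{transfer}); second, one feeds this Morse inequality into the orthogonality argument for approximate Zariski decompositions $\mu_k^*\alpha = \alpha_k + E_k$ on modifications $X_k\to X$, applying it with $\beta = t\,\pi_k^*T$ to show $\int_{X_k}\alpha_k^{n-1}\wedge\llbracket E_k\rrbracket \to 0$. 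The plurisupported K\"ahler current $T$ enters only through this Morse step, as the ``$\beta$'' against which one differentiates the volume.

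Your proposal as written is not a proof sketch so much as a list of tools from the paper, several of which are irrelevant here. The genuine missing idea is the Morse inequality for plurisupported $\beta$ and its role in the orthogonality of approximate Zariski decompositions; without naming that intermediate step you have no route from the envelope mass bound to the cone duality. The pluripolar Siu decomposition and the max/min machinery play no direct role in this particular deduction.
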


Currently, \cite[Conj. 10.1]{BDPP} is known to hold for projective manifolds, thanks to \cite{WN1}. If $X$ is only K\"ahler, some partial progress has been made by showing special cases of equivalent versions of \cite[Conj. 10.1]{BDPP} (c.f. \cite{BFJ, WN1, Xiao3}); a result of Tosatti \cite{To7} shows that $\langle \alpha^n\rangle = \alpha\cdot\langle\alpha^{n-1}\rangle$ for pseudoeffective classes $[\alpha]$ which are not big, and another, very recent, result of Witt-Nystr\"om shows that $\vol(\alpha + t\beta)$ is differentiable in $t$ whenever $[\beta] = c_1(D)$, for an effective divisor $D$ \cite[Thrm. C]{WN3}. We also mention work on the ``qualitative" part of this conjecture, which is known on even some non-K\"ahler manifolds \cite{Xiao1, Pop, GuLu2}.

Our proof follows the same route as \cite{WN1}, i.e. we first prove a particular case of (the general form of) Demailly's weak transcendental holomorphic Morse inequalities  (c.f. \cite[Prop. 1.1]{Xiao3}) in Theorem \ref{WN_MN}. We then use these to show the asymptotic orthogonality property of approximate Zariski decompositions (see \cite{BDPP} or the appendix to \cite{WN1}). Compared to \cite{WN1}, we are obviously able to handle general plurisupported currents (as opposed to only divisors), but we are also able to deal directly with arbitrary big classes in Theorem \ref{WN_MN}, as opposed to only working with nef classes.

Theorem \ref{kookoo} provides a potentially new approach to solving the BDPP conjecture: showing that an arbitrary K\"ahler manifold admits at least one plurisupported current whose cohomology class is K\"ahler. To the best of the author's knowledge, very little seems to be known in this direction. The currently available methods of construction fundamentally come from working on $\C^n$, and, in light of Proposition \ref{eyy}, it seems difficult to adapt them directly to produce plurisupported currents on non-projective manifolds (again, on projective manifolds, the conjecture is already known thanks to Witt-Nystr\"om \cite{WN1}). This leads to the following question:

\begin{quest}\label{ookook}
Suppose that $T$ is a plurisupported current on a compact K\"ahler manifold $X$. Is $[T] \in \mathrm{NS}(X, \R)$?
\end{quest}

Refuting Question \ref{ookook} would be a major first step towards showing that Theorem \ref{kookoo} represents an honest generalization of the main result of \cite{WN1}. Conversely, an affirmative answer to Question \ref{ookook} would represent an extraordinary generalization of Kodaira's Embedding Theorem, replacing divisors with plurisupported currents. 

Question \ref{ookook} is also related to the question of studying ``complements" of complete K\"ahler domains -- if $T$ is plurisupported and $\mathrm{Supp}(T)$ is closed, then $X\setminus \mathrm{Supp}(T)$ is a complete K\"ahler manifold (c.f. \cite[Ex. 3.2.1]{XuLiu_thesis}).\\

Our proof of Theorem \ref{kookoo} relies on Lemma \ref{subtraction} and a separate generalization of the later half of the proof of \cite{WN1}:

\begin{thm}\label{ppbound_intro}
(Theorem \ref{ppbound}) Let $X^n$ be a compact K\"ahler manifold, and suppose that $[\theta], [\sigma], [\rho]$ are pseudoeffective classes on $X$ such that $\theta = \sigma+\rho$. Let $\xi, \psi\in\PSH(X,\theta)$ and $\eta\in\PSH(X,\rho)$ be such that $P_\sigma(\psi - \eta) \in\PSH(X,\sigma)$ is not $-\infty$, $\psi\leq\xi$ (so that $P_\sigma(\xi - \eta) \not=-\infty$ also), and $\eta$ has small unbounded locus. Then we have the following estimate:
\[
\int_X \langle\sigma^n_{P_\sigma(\xi - \eta)}\rangle - \int_X \langle \sigma_{P_\sigma(\psi - \eta)}^n\rangle \leq \int_X\langle\theta^n_\xi\rangle - \int_X \langle\theta_\psi^n\rangle.
\]
\end{thm}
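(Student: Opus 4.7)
The plan is to separate the argument into an ``algebraic'' reduction using multilinearity and monotonicity of the non-pluripolar product, followed by a ``geometric'' step that exploits the maximality of the envelope $P_\sigma$. The first part is modeled on the bookkeeping in Witt-Nystr\"om's appendix, while the second should generalize his divisor-theoretic argument to genuinely plurisupported data.

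Set $u := P_\sigma(\xi - \eta)$ and $v := P_\sigma(\psi - \eta)$ in $\PSH(X,\sigma)$. Since the envelope is order-preserving, $\psi \leq \xi$ gives $v \leq u$. Pass to the lifts $\chi := u + \eta$ and $\chi' := v + \eta$ in $\PSH(X,\theta)$, which satisfy $\chi \leq \xi$, $\chi' \leq \psi$, $\chi' \leq \chi$, together with the positive current identities $\theta_\chi = \sigma_u + \rho_\eta$ and $\theta_{\chi'} = \sigma_v + \rho_\eta$. The multilinearity of the non-pluripolar product (applicable because $\eta$ has small unbounded locus) together with a binomial expansion gives
\[
\int_X \langle \theta_\chi^n \rangle - \int_X \langle \sigma_u^n \rangle = \sum_{k=1}^n \binom{n}{k} \int_X \langle \sigma_u^{n-k} \wedge \rho_\eta^k \rangle,
\]
and analogously for the pair $(\chi', v)$. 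Since $u \geq v$, monotonicity of mixed non-pluripolar masses (Witt-Nystr\"om / Darvas--DiNezza--Lu) renders every difference $\int_X \langle \sigma_u^{n-k} \wedge \rho_\eta^k \rangle - \int_X \langle \sigma_v^{n-k} \wedge \rho_\eta^k \rangle$ non-negative, and subtracting the two identities yields the intermediate bound
\[
\int_X \langle \sigma_u^n \rangle - \int_X \langle \sigma_v^n \rangle \leq \int_X \langle \theta_\chi^n \rangle - \int_X \langle \theta_{\chi'}^n \rangle.
\]

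It then suffices to establish
\[
\int_X \langle \theta_\chi^n \rangle - \int_X \langle \theta_{\chi'}^n \rangle \leq \int_X \langle \theta_\xi^n \rangle - \int_X \langle \theta_\psi^n \rangle, \qquad (\ast)
\]
which is the geometric core. The envelope structure enters via the orthogonality property: on the contact set $D_\xi := \{u = \xi - \eta\}$ one has $\chi = \xi$, so $\theta_\chi$ agrees with $\theta_\xi$ there, while off $D_\xi$ the envelope $u$ is $\sigma$-maximal, so $\langle \sigma_u^{n-k} \wedge \rho_\eta^k \rangle$ is concentrated on $D_\xi$ up to polar sets; the parallel statement holds for $D_\psi := \{v = \psi - \eta\}$. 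I would attempt to prove $(\ast)$ by a Demailly-type truncation ($\xi_j := \max(\xi, -j)$, and similarly for $\psi$ and $\eta$) to reduce to the bounded case, then establish $(\ast)$ in the bounded setting via a direct integration-by-parts argument against Bedford-Taylor products off the unbounded locus of $\eta$, and finally pass to the limit using the monotone continuity of the non-pluripolar product.

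The main obstacle is precisely $(\ast)$. The algebraic reduction above is essentially formal, but $(\ast)$ requires a genuine interplay between the envelope's maximality and the monotone comparison $\psi \leq \xi$. This step is significantly more delicate than its counterpart in \cite{WN1}, where $\eta$ is the potential of an effective $\R$-divisor and one can exploit the explicit geometry of the divisor's support; here one must handle arbitrary plurisupported singularities, for which the small unbounded locus hypothesis on $\eta$ is essential in order to make sense of the mixed non-pluripolar products cohomologically off a negligible set.
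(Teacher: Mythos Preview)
Your algebraic reduction is correct: multilinearity plus mixed monotonicity do give
\[
\int_X \langle \sigma_u^n\rangle - \int_X \langle \sigma_v^n\rangle \leq \int_X \langle \theta_\chi^n\rangle - \int_X \langle \theta_{\chi'}^n\rangle.
\]
But this step buys you almost nothing: it is in fact the special case of the theorem itself in which the pair $(\xi,\psi)$ already has the form $(u+\eta,\, v+\eta)$, and the remaining inequality $(\ast)$ carries the entire difficulty. There is no a priori reason the ``mass defect'' $\int_X\langle\theta_\xi^n\rangle - \int_X\langle\theta_\chi^n\rangle$ should dominate $\int_X\langle\theta_\psi^n\rangle - \int_X\langle\theta_{\chi'}^n\rangle$; these are gaps between four $\theta$-psh functions with no envelope relation linking $\chi$ to $\psi$ or $\chi'$ to $\xi$. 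Your proposed truncation $\xi_j=\max(\xi,-j)$, etc., is also problematic: once you truncate $\eta$, the functions $\chi=u+\eta$ and $\chi'=v+\eta$ no longer arise from the truncated envelope, so the contact-set orthogonality you want to invoke is destroyed. And that orthogonality itself (the measure $\langle\sigma_u^n\rangle$ lives on $\{u=\xi-\eta\}$) compares $\sigma_u$ to $\theta_\xi$, not $\theta_\chi$ to $\theta_\xi$; it does not obviously feed into $(\ast)$.

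The paper avoids $(\ast)$ entirely. The contact-set argument is used \emph{pointwise}, as the measure inequality $\langle \sigma_{P_\sigma(\psi-\eta)}^n\rangle \leq \langle\theta_\psi^n\rangle$ (Lemma preceding the theorem). The key idea you are missing is then an approximation of $\psi$ by a decreasing sequence $\psi_j := P_\theta(\min\{\tilde\psi_j,\xi\})$ with $\tilde\psi_j$ smooth and $\tilde\psi_j\searrow\psi$; crucially $[\psi_j]=[\xi]$, so by Witt-Nystr\"om monotonicity $\int_X\langle\theta_{\psi_j}^n\rangle = \int_X\langle\theta_\xi^n\rangle$ and likewise $\int_X\langle\sigma_{P_\sigma(\psi_j-\eta)}^n\rangle = \int_X\langle\sigma_{P_\sigma(\xi-\eta)}^n\rangle$. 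One then bounds $\int_X\langle\sigma_{P_\sigma(\psi-\eta)}^n\rangle$ from below by the cut-off functions $h_{R,\e}$ of Darvas--Di Nezza--Lu, passes to the limit $j\to\infty$ via Xing's lemma (this is where small unbounded locus of $\eta$ is actually used, to get locally bounded barriers), and applies the pointwise measure inequality to transfer the error term from the $\sigma$-side to the $\theta$-side. The singularity-type-preserving approximation is what makes the $\xi$-masses appear for free; your route through $\chi,\chi'$ has no analogue of this mechanism.
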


The proof of Theorem \ref{ppbound_intro} builds on the techniques in \cite{DDL2} (see also \cite{DT}) to generalize the proof in \cite{WN1}. Read literally, Theorem \ref{ppbound_intro} asserts an upper bound on the pluripolar mass of certain envelopes when one shifts cohomology classes. It is a generalization of a similar result from the author's thesis \cite[Theorem 3.2.1]{McC3}.

Using the same idea as in \cite{McC3}, Theorem \ref{ppbound_intro} can be used to generalize a well-known inequality of McKinnon-Roth \cite{MR} (which they attribute to Salberger, in unpublished work) related to Seshadri constants, Theorem \ref{MR}. This inequality has recently found a number of applications in algebraic geometry, particularly in relation to the study of K-stability (c.f. \cite[Thrm. 2.3]{Fuj1} and \cite{Fuj2}), and we hope our generalization might find similar usage in the study of constant scalar curvature metrics, for example. Compared to the versions in \cite{McC3}, Theorems \ref{ppbound_intro} and \ref{MR} apply to general pseudoeffective classes, instead of just K\"ahler classes.\\
\newline

We now briefly outline the rest of the paper. In Section 2 we define the pluripolar Siu decomposition (Defns. \ref{pSiuDecom_curr} and \ref{pSiuDecom_singtype}) and prove basic properties of plurisupported currents. We prove Theorem \ref{ppbound_intro} in Section 3, as well as the claimed generalization of the results in \cite{WN1}, Theorems \ref{WN_MN} and \ref{kookoo} (Theorem \ref{crank}); Theorem \ref{WN_MN} in particular requires the use of Theorem \ref{transfer}, which is a generalization of an observation of Berman \cite{Be3} (Lemma \ref{hoponpop}). Finally, in Section 4 we make an attempt to study the problem of determining existence of plurisupported currents, discussing two different, naive starting points of inquiry.
\\
\newline

We finish this introduction by listing here some of the notation we will use in this paper, especially that which is not standard in the literature. If $\vp: X\rightarrow \R\cup\{-\infty\}$ is a function, we define its singularity type to be:
\[
[\vp] := \{ \psi: X\rightarrow \R\cup\{-\infty\}\ |\ \abs{\vp - \psi} \leq C\}
\]
where here $C$ is a constant that is allowed to depend on $\psi$. In general, we will only work with singularity types of quasi-psh functions, so one should assume that any function $\vp$ is quasi-psh unless otherwise stated. There is a standard ordering on singularity types, given as follows:
\[
[\vp] \leq [\psi] \text{ if and only if } \vp \geq \psi + C.
\]
This makes sense grammatically (as it reads ``$[\vp]$ is less singular than $[\psi]$"), but runs counter to the functional inequality.

If $\theta$ is a smooth, closed, real $(1,1)$-form, we write $[\theta]\in H^{1,1}(X,\R)$ for the corresponding (real) Dolbeault cohomology class. We write $\PSH(X,\theta)$ for the space of $\theta$-psh functions. If $f: X\rightarrow \R$, we write:
\[
P_\theta(f) := \left(\sup\{\vp\in\PSH(X,\theta)\ |\ \vp \leq f\}\right)^*,
\]
where here the star $\cdot^*$ represents upper-semicontinuous regularization. If $f$ is a difference of quasi-psh functions, we can remove the star by Hartog's Lemma, so that $P_\theta(f) \leq f$ -- we will only consider such $f$ in this paper. When $f= 0$, we write $V_\theta:= P_\theta(0)$. It is easy to see that $V_\theta$ realizes the unique minimal singularity type among all $\theta$-psh functions \cite{DPS}.

If $E\subset X$ is a divisor, we write $[E]$ for the first Chern class of $E$ (usually written $c_1(E)$) and $\llbracket E\rrbracket$ for the current of integration along $E$.

If $T_1,\ldots, T_m$ are closed positive currents on $X$, we denote the non-pluripolar product \cite{BEGZ} of these by $\langle T_1\wedge\ldots\wedge T_m\rangle$. If $[\theta]$ is a cohomology class, we denote the movable product of $[\theta]$ \cite{BDPP} by slight abuse of notation as $\langle \theta^k\rangle := \langle \theta_{V_\theta}^k\rangle$, $1\leq k \leq n$.

\subsection*{Acknowledgments:} We would like to thank V. Guedj, for pointing out \cite{Rans} to us, D. Witt-Nystr\"om for discussions about the McKinnon-Roth inequality, and also J. Wiegerinck, for help with an earlier draft. We also thank T. Darvas, V. Tosatti, and D.-V. Vu for many helpful comments and suggestions. Additionally, we would like to thank M. Geis and S. Zelditch for their encouragement and interest in the current work. Parts of this work was carried out while the author was still a graduate student at Northwestern University, paritally supported under the NSF RTG training grant DMS-1502632. 


\section{Plurisupported Currents}


We define the pluripolar Siu decomposition and other show other facts about plurisupported currents, including Theorem \ref{Banach}. Theorem \ref{transfer} is proved in subsection \ref{subsect_Berman}, and will be used crucially later in Section 3. We start by proving a simple corollary of Lemma \ref{subtraction}. Recall that our convention for the partial ordering for two singularity types is opposite to their partial ordering as functions.

\begin{cor}\label{ordering}
Let $X^n$ be a compact K\"ahler manifold, and suppose that $T, S$ are closed, positive $(1,1)$-currents on $X$. Suppose that $S := \beta + i\p\pbar\psi$ is plurisupported and let $T := \alpha + i\p\pbar\vp$. Then:
\[
[\psi] \leq [\vp]\quad\text{ if and only if }\quad S \leq T.
\]
\end{cor}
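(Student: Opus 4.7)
The plan is to prove each direction separately.

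The backward direction ($[\psi] \leq [\vp] \implies S \leq T$) should be essentially a restatement of Lemma \ref{subtraction}: by the convention on singularity types recalled in the introduction, $[\psi] \leq [\vp]$ means that $\vp$ is more singular than $\psi$, in which case Lemma \ref{subtraction} yields that $\vp - \psi$ is $(\alpha - \beta)$-psh, i.e.\ $T - S = (\alpha - \beta) + i\p\pbar(\vp - \psi) \geq 0$. Note that this direction makes no use of the hypothesis that $S$ is plurisupported beyond what is already built into Lemma \ref{subtraction}.

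For the forward direction, I would argue by constructing a potential for the positive current $T - S$ directly. Assuming $S \leq T$, the current $T - S$ is closed and positive in the cohomology class $[\alpha] - [\beta]$, so by the $\p\pbar$-lemma on the compact K\"ahler manifold $X$ we may write $T - S = (\alpha - \beta) + i\p\pbar u$ for some $(\alpha - \beta)$-psh function $u$. The key observation is then that $\psi + u \in \PSH(X, \alpha)$, since
\[
\alpha + i\p\pbar(\psi + u) = (\beta + i\p\pbar\psi) + ((\alpha - \beta) + i\p\pbar u) = S + (T - S) = T \geq 0.
\]
Both $\vp$ and $\psi + u$ are now $\alpha$-psh functions with the same distributional $i\p\pbar$, so by the standard uniqueness of quasi-psh potentials on a connected compact manifold (up to an additive constant) they agree up to a constant: $\vp = \psi + u + C$. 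Since $u$ is quasi-psh on compact $X$, it is bounded above, so this identity rearranges to $\vp \leq \psi + C'$, i.e.\ $\vp$ is more singular than $\psi$, which gives $[\psi] \leq [\vp]$.

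I do not anticipate any substantial obstacle. The mildly delicate point is that $\vp$, $\psi$, and $u$ can all take the value $-\infty$ on pluripolar sets, but this causes no trouble here: the equality $\vp = \psi + u + C$ is understood as an equality of quasi-psh functions (equivalently, an equality outside a pluripolar set), and the rearrangement to a pointwise inequality only uses that $u$, being quasi-psh on a compact manifold, is bounded above by a constant. Notably, neither direction appears to need anything more sophisticated than Lemma \ref{subtraction} together with the global $\p\pbar$-lemma.
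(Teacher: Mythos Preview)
Your proposal is correct and matches the paper's proof essentially line for line: one direction is Lemma \ref{subtraction}, and the other writes $T-S=(\alpha-\beta)+i\p\pbar u$ with $u$ quasi-psh, then uses uniqueness of potentials on a compact manifold to get $\vp=\psi+u+C$ and hence $\vp\leq\psi+C'$. The only point the paper adds that you omit is the remark that the implication $S\leq T\Rightarrow[\psi]\leq[\vp]$ holds without any plurisupported hypothesis on $S$.
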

\begin{proof}
If $\vp \leq \psi + C$, this is just Lemma \ref{subtraction}. The other direction is in fact always true, without any assumption on $\psi$ -- we provide the short proof for completeness. If $S\leq T$, then $[T - S] = [\alpha - \beta]$ is pseudoeffective, and so there exists a $\xi\in\PSH(X, \alpha - \beta)$, $\xi \leq 0$, such that $\alpha - \beta + i\p\pbar\xi = T-S$. It follows that:
\[
\alpha + i\p\pbar(\xi + \psi) = T = \alpha + i\p\pbar \vp
\]
and so $\psi \geq \xi + \psi = \vp - C$, as $X$ is closed.
\end{proof}


\subsection{The Pluripolar Siu Decomposition}


In this subsection, we decompose any closed positive current into a sum of a plurisupported current and a current which is weakly ``regular," in some pluripotential theoretic sense (Proposition \ref{char_regular_component}). This will be the pluripolar Siu decomposition mentioned in the introduction. We then collect some of the more basic, formal properties of this decomposition.

Suppose that $T\in[\theta]$ is a closed positive current, and that $T = \theta + i\p\pbar\vp$. We define the non-pluripolar product of $T$, written $\langle T\rangle$, by:
\[
\langle T\rangle = \chi_{\{\vp > -\infty\}} T.
\]
By \cite{BEGZ}, $\langle T\rangle$ will be closed and positive. It follows that the current:
\[
\llbracket T \rrbracket = T - \langle T\rangle = \chi_{\{\vp=-\infty\}} T
\]
is also closed. It is clearly also positive, and evidently plurisupported. 

\begin{defn}\label{pSiuDecom_curr}
We shall refer to the decomposition:
\[
T := \langle T\rangle + \llbracket T \rrbracket
\] 
as the {\bf pluripolar Siu decomposition of $T$}.
\end{defn}

As justification for the name, note that when $T$ has analytic singularities, the pluripolar Siu decomposition precisely agrees with the usual Siu decomposition (see Proposition \ref{I-sings} for a slightly more general statement). In general, the two decompositions will differ however, as the Siu decomposition will only pick up the divisorial components of $\llbracket T\rrbracket$.

Choose smooth closed forms $\sigma\in[\langle T\rangle]$ and $\rho\in[\llbracket T\rrbracket]$ such that $\theta = \sigma + \rho$. Then we can find potentials $u \in \PSH(X,\sigma)$, $v\in \PSH(X,\rho)$, $u, v \leq 0$, such that:
\[
\langle T\rangle = \sigma + i\p\pbar u\text{ and } \llbracket T\rrbracket = \rho + i\p\pbar v.
\]
It follows that there exists a constant $C$ such that:
\[
\vp = u + v + C,
\]
and so $[\vp] = [u] + [v]$. Changing any of the smooth forms $\sigma$, $\rho$, or $\theta$ will only change $u$ and/or $v$ up to a smooth, bounded term, so the singularity types $[u]$ and $[v]$ are well defined. 

\begin{defn}\label{pSiuDecom_singtype}
We refer to the above splitting of $[\vp]$ as the {\bf pluripolar Siu decomposition of the singularity type $[\vp]$}, and write:
\[
\langle \vp\rangle = [u]\text{ and } \llbracket \vp \rrbracket = [v].
\]
\end{defn}

The following pair of definitions will be convenient, but please note the potential for confusion:

\begin{defn}
Suppose that $[\vp]$ is the singularity type of a quasi-psh function $\vp$. We say that $[\vp]$ is a {\bf plurisupported singularity type} if there exists some $\psi\in [\vp]$ and some smooth, closed, real $(1,1)$ form $\theta$, such that $\theta_\psi$ is a plurisupported current.
\end{defn}

Note that, if $[\vp]$ is plurisupported, then for a general $v \in [\vp]$ with $v\in\PSH(X,\sigma)$, we will not usually have that $\sigma_v$ is plurisupported, even if $\theta_v$ is plurisupported for some other form $\theta$. Also note that for most quasi-psh $v\in[\vp]$, there will not be any corresponding form $\theta$ such that $\theta_v$ is plurisupported at all.

Thus, in order to get a plurisupported current from a plurisupported singularity type, one must specify both a specific quasi-psh function $\psi$ and a specific smooth form $\theta$ simultaneously (though there will always be many such choices). We call such a pair $(\psi, \theta)$ a {\bf plurisupported representative of $[\vp]$}, implicitly referring to the current $\theta_\psi$. Note that this current will always be unique, by Corollary \ref{ordering}.\\

It is easy to see from the definitions that $\llbracket\vp\rrbracket$ is plurisupported for any singularity type $[\vp]$ with $\vp$ quasi-psh.\\

While the pluripolar Siu decomposition of $[\vp]$ as currently defined does not depend on our specific choices of potentials for the current $T$, it superficially appears to depend on $T$ itself. The following alternate characterization of $\llbracket \vp \rrbracket$ shows this is not the case.

\begin{prop}\label{char_singular_part}
Fix a closed positive current $T = \theta + i\p\pbar \vp$. We have the following characterizations of $\llbracket T\rrbracket$ and $\llbracket \vp\rrbracket$:
\begin{enumerate}
\item $\llbracket T\rrbracket$ is the largest plurisupported current such that $\llbracket T\rrbracket \leq T$.
\item $\llbracket \vp\rrbracket$ is the largest plurisupported singularity type such that $\llbracket \vp \rrbracket \leq [\vp]$.
\end{enumerate}
\end{prop}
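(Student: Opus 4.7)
The plan is to deduce (2) from (1), so I would focus first on (1). By construction $\llbracket T\rrbracket = \chi_{\{\vp=-\infty\}}T$ is plurisupported and satisfies $\llbracket T\rrbracket \leq T$, so it only remains to show that any plurisupported current $S\leq T$ already satisfies $S\leq \llbracket T\rrbracket$. Write $S=\beta+i\p\pbar\psi$. Since $S$ is plurisupported and $S\leq T$, Corollary \ref{ordering} gives $[\psi]\leq [\vp]$, i.e.\ $\psi\geq \vp - C$ for some constant $C$; this immediately yields the level-set inclusion $\{\psi=-\infty\}\subseteq \{\vp=-\infty\}$. Combined with the Bedford--Taylor/Federer fact (recalled just after the definition of plurisupported currents) that $\mathrm{Supp}(S)\subseteq \{\psi=-\infty\}$, we conclude
\[
S \;=\; \chi_{\{\vp=-\infty\}} S \;\leq\; \chi_{\{\vp=-\infty\}}T \;=\; \llbracket T\rrbracket,
\]
finishing (1).

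For (2) I would first verify that $\llbracket\vp\rrbracket$ is itself a plurisupported singularity type satisfying $\llbracket\vp\rrbracket\leq [\vp]$: the former is the observation noted immediately after Definition \ref{pSiuDecom_singtype}, and the latter follows from the decomposition $\vp = u+v+C$ by normalizing $u\leq 0$, which gives $v\geq \vp - C$, i.e.\ $[v]\leq [\vp]$ in the paper's singularity-type convention. Now let $[\psi]$ be any plurisupported singularity type with $[\psi]\leq [\vp]$ and pick a plurisupported representative $(\psi',\beta)$ of $[\psi]$, so that $S:=\beta+i\p\pbar\psi'$ is plurisupported with $[\psi']=[\psi]\leq [\vp]$. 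Applying Corollary \ref{ordering} to the pair $(S,T)$ gives $S\leq T$; part (1) then upgrades this to $S\leq \llbracket T\rrbracket$; and a second application of Corollary \ref{ordering}, this time to the pair $S$ and $\llbracket T\rrbracket$ (both plurisupported, so either may play the role of ``$S$" in the statement), yields $[\psi]=[\psi']\leq [v] = \llbracket\vp\rrbracket$, as required.

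I do not anticipate a substantive obstacle here: the real content is already packaged in Lemma \ref{subtraction}/Corollary \ref{ordering} together with the basic fact $\mathrm{Supp}(S)\subseteq \{\psi=-\infty\}$ for plurisupported $S$. The only items that require attention are keeping the direction of the singularity-type ordering straight (it is reversed from the functional ordering), and verifying that in passing from $S\leq T$ to $S\leq \llbracket T\rrbracket$ the localizing factor $\chi_{\{\vp=-\infty\}}$ discards nothing of $S$ — which is precisely the inclusion $\{\psi=-\infty\}\subseteq \{\vp=-\infty\}$ established above.
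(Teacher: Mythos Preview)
Your proof is correct and follows essentially the same route as the paper: for (1) you use Corollary \ref{ordering} to get the level-set inclusion $\{\psi=-\infty\}\subseteq\{\vp=-\infty\}$ and then localize with $\chi_{\{\vp=-\infty\}}$, and for (2) you pass through a plurisupported representative and apply (1) together with Corollary \ref{ordering} twice. The only cosmetic difference is that you spell out explicitly why $\llbracket\vp\rrbracket\leq[\vp]$, which the paper simply asserts ``by definition.''
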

\begin{proof}
$(1)$ is clear-- suppose $T = R + \llbracket T'\rrbracket$ is another decomposition such that $\llbracket T'\rrbracket$ is plurisupported and $R \geq 0$. Let $\vp$ be a potential for $T$ and $\psi$ a potential for $\llbracket T'\rrbracket$. By Corollary \ref{ordering}, $[\psi] \leq [\vp]$, so that $\{\psi = -\infty\} \subseteq \{\vp = -\infty\}$. Since $\psi$ is plurisupported, $\llbracket T'\rrbracket$ is supported on $\{\vp = -\infty\}$, and hence:
\[
\llbracket T'\rrbracket = \chi_{\{\vp = -\infty\}} \llbracket T'\rrbracket \leq \chi_{\{\vp = -\infty\}} T = \llbracket T\rrbracket.
\]

We now show $(2)$. Suppose that $[\xi] \leq [\vp]$ is plurisupported. Let $\gamma_\xi$ be a plurisupported representative of $[\xi]$. By Corollary \ref{ordering}, we have that $\gamma_\xi \leq T$, and so by $(1)$, $\gamma_\xi \leq \llbracket T\rrbracket$. Applying Corollary \ref{ordering} again shows that $[\xi] \leq \llbracket \vp\rrbracket$. Since $\llbracket \vp\rrbracket$ is plurisupported and $\llbracket \vp \rrbracket \leq [\vp]$ by definition, we are done.
\end{proof}

We see that $(2)$ is defined only in terms of the singularity type $[\vp]$ -- thus, it follows that if $\vp, \vp' \in[\vp]$ are two quasi-psh functions such that $\vp\in\PSH(X,\theta)$ and $\vp'\in\PSH(X,\theta')$ for some smooth, closed, real $(1,1)$-forms $\theta$ and $\theta'$, then we do in fact have $\llbracket \vp\rrbracket = \llbracket \vp'\rrbracket$, independent of the two currents $\theta_\vp$ and $\theta'_{\vp'}$.

We finish this subsection by collecting some basic properties of the pluripolar Siu decomposition of $[\vp]$ -- all are quite elementary, given Lemma \ref{subtraction}, and will be familiar to most readers. To minimize redundancy, we skip the equivalent statements for the pluripolar Siu decomposition of a closed positive current; one can recover these by choosing appropriate representatives in the pluripolar Siu decomposition of $[\vp]$. All statements can be carried over in this manner; so for instance, if $T = \theta + i\p\pbar\vp$ and $\langle \vp\rangle = [0]$, then this is the same as saying $\langle T\rangle$ has a bounded potential. The one slight point of note is that $\llbracket \vp \rrbracket = [0]$ implies that $\llbracket T\rrbracket \equiv 0$, by Bedford and Taylor \cite{BT2, BT3} (c.f. the introduction, right after Definition 1.1).

\begin{prop}\label{projection}
Suppose that $[\vp]$ is the singularity type of a quasi-psh function $\vp$. Then $\langle \langle \vp\rangle \rangle = \langle \vp\rangle$ and $\llbracket \langle \vp \rangle\rrbracket = 0$. Similarly $\langle\llbracket \vp \rrbracket\rangle = 0$ and $\llbracket \llbracket \vp\rrbracket \rrbracket = \llbracket \vp\rrbracket$.
\end{prop}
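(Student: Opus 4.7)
The plan is to verify the four identities at the level of currents, and then translate them into singularity types via Definition \ref{pSiuDecom_singtype}. Write $T = \theta + i\p\pbar\vp$ for a closed positive current representing $[\vp]$, and choose potentials $u \in \PSH(X,\sigma)$, $v \in \PSH(X,\rho)$ with $\langle T\rangle = \sigma + i\p\pbar u$, $\llbracket T\rrbracket = \rho + i\p\pbar v$, so that $\vp = u + v + C$ on the nose. Since $u,v$ are quasi-psh (hence locally bounded above), this identity yields the set-theoretic decomposition $\{\vp = -\infty\} = \{u=-\infty\}\cup\{v=-\infty\}$.

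I will start with the second pair, since it is the most direct. The current $\llbracket T\rrbracket$ is plurisupported by construction, and by the standard fact recalled just after Definition 1.1 in the introduction, $\mathrm{Supp}(\llbracket T\rrbracket)\subseteq\{v=-\infty\}$. Applying the definitions to the current $\llbracket T\rrbracket$ with potential $v$ then immediately gives $\langle\llbracket T\rrbracket\rangle = \chi_{\{v>-\infty\}}\llbracket T\rrbracket = 0$ and $\llbracket\llbracket T\rrbracket\rrbracket = \chi_{\{v=-\infty\}}\llbracket T\rrbracket = \llbracket T\rrbracket$. Through Definition \ref{pSiuDecom_singtype}, this translates to $\langle\llbracket\vp\rrbracket\rangle = 0$ and $\llbracket\llbracket\vp\rrbracket\rrbracket = \llbracket\vp\rrbracket$.

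For the first pair, I will use that $\langle T\rangle = \chi_{\{\vp>-\infty\}}T$ puts no mass on the pluripolar set $\{\vp=-\infty\}$ (this is the defining property of the non-pluripolar product, cf. \cite{BEGZ}). Since $\{u=-\infty\}\subseteq\{\vp=-\infty\}$, it follows at once that $\llbracket\langle T\rangle\rrbracket = \chi_{\{u=-\infty\}}\langle T\rangle = 0$, and dually $\langle\langle T\rangle\rangle = \chi_{\{u>-\infty\}}\langle T\rangle = \langle T\rangle$. Translating back via Definition \ref{pSiuDecom_singtype} gives $\llbracket\langle\vp\rangle\rrbracket = 0$ and $\langle\langle\vp\rangle\rangle = \langle\vp\rangle$.

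The argument is almost entirely formal once the right set inclusions are in hand. The only real point to watch is the convention recorded right before the proposition: the equality $\llbracket\cdot\rrbracket = 0$ at the level of singularity types is shorthand for the trivial singularity type $[0]$, which by Bedford--Taylor corresponds to the zero plurisupported current. An alternative, slightly slicker route for the identity $\llbracket\langle\vp\rangle\rrbracket = 0$ is to invoke Proposition \ref{char_singular_part}(2): any plurisupported singularity type $[\xi]\leq\langle\vp\rangle$ has a plurisupported representative $R$ with $R\leq\langle T\rangle$ by Corollary \ref{ordering}, but then $R = \chi_E R\leq\chi_E\langle T\rangle = 0$ for any pluripolar $E\supseteq\mathrm{Supp}(R)$, forcing $[\xi] = [0]$.
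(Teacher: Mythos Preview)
Your proof is correct, but it takes a different route from the paper's. The paper works entirely at the level of singularity types and uses the maximality characterization from Proposition~\ref{char_singular_part}(2): writing out $[\vp] = \llbracket\vp\rrbracket + \llbracket\langle\vp\rangle\rrbracket + \langle\langle\vp\rangle\rangle$, one observes that $\llbracket\vp\rrbracket + \llbracket\langle\vp\rangle\rrbracket$ is plurisupported and $\leq [\vp]$, hence $\leq \llbracket\vp\rrbracket$ by maximality, forcing $\llbracket\langle\vp\rangle\rrbracket = 0$. The second pair is handled the same way: $\llbracket\vp\rrbracket$ is plurisupported and $\leq \llbracket\vp\rrbracket$, so maximality gives $\llbracket\llbracket\vp\rrbracket\rrbracket = \llbracket\vp\rrbracket$. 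Your approach instead drops down to currents and computes directly with characteristic functions, using the set inclusion $\{u=-\infty\}\subseteq\{\vp=-\infty\}$ and the fact that a plurisupported current is supported on the $-\infty$ set of its own potential. This is more elementary in that it avoids the maximality characterization (indeed, your main argument would go through even before Proposition~\ref{char_singular_part} is proved), at the cost of being slightly less self-contained at the singularity-type level. The ``slicker route'' you sketch at the end is essentially the paper's argument in disguise.
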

\begin{proof}
Consider the pluripolar Siu decomposition of $\langle \vp \rangle$; expanding it out gives:
\[
[\vp] = \llbracket \vp \rrbracket + \llbracket \langle \vp\rangle \rrbracket +\langle \langle \vp\rangle \rangle.
\]
By $(2)$ of Proposition \ref{char_singular_part}, $\llbracket \vp\rrbracket + \llbracket \langle \vp \rangle \rrbracket \leq \llbracket \vp\rrbracket$, implying that $\llbracket \langle\vp\rangle \rrbracket = 0$ and hence $\langle \langle \vp\rangle \rangle = \langle \vp\rangle$.

Similarly, $\llbracket \llbracket \vp\rrbracket \rrbracket = \llbracket \vp \rrbracket$ by $(2)$ of Proposition \ref{char_singular_part}, as $\llbracket \vp\rrbracket$ is plurisupported and obviously $\llbracket \vp \rrbracket \leq \llbracket \vp\rrbracket$ and $\llbracket\llbracket\vp\rrbracket\rrbracket \leq \llbracket\vp\rrbracket$.
\end{proof}

\begin{prop}\label{char_regular_component}
Fix a singularity type $[\vp]$ of a quasi-psh function $\vp$. We have that $v \in \mathcal{E}_1(X, \omega)$ for each $v\in \langle \vp\rangle$ and each K\"ahler form $\omega$ such that $v\in\PSH(X,\omega)$. Here $\mathcal{E}_1(X,\omega) := \{\psi\in\PSH(X,\omega)\ |\ \int_X \langle \omega_\psi\wedge\omega^{n-1}\rangle = \int_X \omega^n\}$.
\end{prop}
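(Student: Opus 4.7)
The plan is to show that the current $\omega_v$ does not charge any pluripolar set, after which the desired identity reduces to Stokes' theorem and the plurifine-local description of the non-pluripolar product.

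First, I would use Proposition~\ref{projection} to simplify the singularity type of $v$: applied to $\langle\vp\rangle$ it gives $\llbracket\langle\vp\rangle\rrbracket=[0]$, and since $v\in\langle\vp\rangle$ means $[v]=\langle\vp\rangle$, we obtain $\llbracket v\rrbracket=[0]$. Next, I would form the pluripolar Siu decomposition of the specific current $\omega_v:=\omega+i\p\pbar v$. By Definitions~\ref{pSiuDecom_curr} and \ref{pSiuDecom_singtype}, any potential of $\llbracket\omega_v\rrbracket$ represents the singularity type $\llbracket v\rrbracket=[0]$, hence is bounded. The Bedford--Taylor fact recorded just before Proposition~\ref{projection} (\cite{BT2,BT3}), which says $\llbracket v\rrbracket=[0]$ forces the associated plurisupported current to vanish, then gives $\llbracket\omega_v\rrbracket=0$, i.e., $\omega_v$ puts no mass on the pluripolar set $\{v=-\infty\}$.

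Then I would wedge with the smooth closed form $\omega^{n-1}$, producing a positive Radon measure $\omega_v\wedge\omega^{n-1}$ which, by the previous step, also does not charge $\{v=-\infty\}$. Combining this with the standard description of the non-pluripolar product by plurifine locality (using the canonical truncations $\max(v,-j)$) identifies
\[
\langle\omega_v\wedge\omega^{n-1}\rangle=\chi_{\{v>-\infty\}}\omega_v\wedge\omega^{n-1}=\omega_v\wedge\omega^{n-1}.
\]
Finally, integration by parts against the smooth closed form $\omega^{n-1}$ yields
\[
\int_X\omega_v\wedge\omega^{n-1}=\int_X\omega\wedge\omega^{n-1}+\int_X i\p\pbar v\wedge\omega^{n-1}=\int_X\omega^n,
\]
which together with the previous display gives the claimed $\int_X\langle\omega_v\wedge\omega^{n-1}\rangle=\int_X\omega^n$.

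The only genuinely nontrivial input is the Bedford--Taylor vanishing used to pass from $\llbracket v\rrbracket=[0]$ to $\llbracket\omega_v\rrbracket=0$, which the paper has already invoked. Everything else is formal bookkeeping around the pluripolar Siu decomposition, with the conceptual content being that, by construction, the ``regular part'' $\langle\vp\rangle$ of any singularity type has trivial plurisupported component.
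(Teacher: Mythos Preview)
Your proposal is correct and follows essentially the same route as the paper: both arguments use Proposition~\ref{projection} to obtain $\llbracket v\rrbracket=[0]$, invoke the Bedford--Taylor fact to deduce $\llbracket\omega_v\rrbracket=0$, and then reduce the mass identity to Stokes' theorem via the decomposition $\omega_v=\langle\omega_v\rangle+\llbracket\omega_v\rrbracket$. The paper compresses your last three paragraphs into the single line $\int_X(\omega^n-\langle\omega_v\rangle\wedge\omega^{n-1})=\int_X\llbracket\omega_v\rrbracket\wedge\omega^{n-1}$, but the content is identical.
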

\begin{proof}
We claim this property is equivalent to $\llbracket \langle \vp\rangle\rrbracket = 0$. Suppose that $v\in\langle \vp\rangle$ and $\omega$ is a K\"ahler form with $v\in \PSH(X,\omega)$. We need to check that $v\in \mathcal{E}_1(X,\omega)$. Note that:
\[
\int_X (\omega^n - \langle\omega_v\rangle\wedge\omega^{n-1}) = \int_X \llbracket \omega_v\rrbracket \wedge \omega^{n-1}
\]
so this is equivalent to just checking that $\llbracket \omega_v\rrbracket = 0$, as claimed.  By Proposition \ref{projection}, this is immediate.
\end{proof}

\begin{rmk}
The class $\mathcal{E}_1(X,\omega)$ has appeared before in the pluripotential literature, in a paper of Coman-Guedj-Zeriahi \cite{CGZ} (they refer to it as $\mathcal{E}^1(\omega,\omega)$, thinking of it as a kind of ``twisted" finite energy class). In their paper, they ask if it is possible to determine a relation between $\mathcal{E}_1(X,\omega)$ and the domain of definition of the complex Monge-Amp{\`e}re operator on a compact K\"ahler surface; in its most optimistic formulation, they ask if these spaces are in fact equal \cite[Question 5.3]{CGZ}. 

We believe this optimistic take to be true, though it would appear that almost no progress has been made in addressing this question since \cite{CGZ} (to be clear, the question is very subtle, given the known differences between the global domain of definition of the complex Monge-Amp{\`e}re operator, and the local domain of definition -- see \cite[Section 3]{CGZ} and B\l ocki \cite{Bl2, Bl3}). If true, this would provide a stronger characterization of the regular part of the pluripolar Siu decomposition (at least in dimension 2), exactly in line with what is known for the regular part of the Siu decomposition of a current with analytic singularities, by a well-known theorem of Demailly \cite[Thrm. 4.5, pg. 152]{Dem7}.

As a matter of terminology, we remark that the class $\mathcal{E}_1(X,\omega)$ is the set of $\omega$-psh functions which have ``full-mass" when considered as $\omega$-subharmonic functions. We make no use of this fact, but it explains our deviation in notation.
\end{rmk}

\begin{prop}\label{sing_ordering}
The singular part of the pluripolar Siu decomposition is order preserving; if $[\psi] \leq [\vp]$ for two quasi-psh functions $\vp$ and $\psi$, then:
\[
\llbracket \psi \rrbracket \leq \llbracket \vp \rrbracket.
\]
\end{prop}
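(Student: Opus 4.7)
The plan is to appeal directly to the universal characterization of $\llbracket\cdot\rrbracket$ given in part (2) of Proposition \ref{char_singular_part}, together with transitivity of the ordering on singularity types.

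First I would observe that the partial order $\leq$ on singularity types is transitive: if $\psi_1 \geq \psi_2 + C_1$ and $\psi_2 \geq \psi_3 + C_2$ for constants $C_1, C_2$, then $\psi_1 \geq \psi_3 + (C_1+C_2)$, so $[\psi_1] \leq [\psi_2]$ and $[\psi_2] \leq [\psi_3]$ imply $[\psi_1] \leq [\psi_3]$. By definition $\llbracket \psi\rrbracket \leq [\psi]$, and by hypothesis $[\psi] \leq [\vp]$, so by transitivity $\llbracket \psi\rrbracket \leq [\vp]$.

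Now $\llbracket \psi \rrbracket$ is, by construction (or by Proposition \ref{projection}), a plurisupported singularity type. Since Proposition \ref{char_singular_part}(2) asserts that $\llbracket \vp\rrbracket$ is the \emph{largest} plurisupported singularity type dominated by $[\vp]$, the inequality $\llbracket \psi\rrbracket \leq [\vp]$ forces $\llbracket \psi\rrbracket \leq \llbracket \vp\rrbracket$, which is the desired conclusion.

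There is no serious obstacle here; the work has already been done in setting up Proposition \ref{char_singular_part}(2), which in turn rests on Corollary \ref{ordering} (and hence on Lemma \ref{subtraction}). The only point to be slightly careful about is not to confuse the two opposite orderings (functional vs.\ singularity-type), but the definition $[\vp]\leq[\psi] \iff \vp \geq \psi + C$ makes the chain of inequalities unambiguous. Thus the proof is essentially a one-line application of the maximality property established earlier in the subsection.
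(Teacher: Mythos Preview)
Your proof is correct, and it is actually a cleaner route than the one the paper takes. The paper argues directly at the level of currents: writing $\llbracket\psi\rrbracket \leq \langle\vp\rangle + \llbracket\vp\rrbracket$, it picks plurisupported representatives $\theta_\xi$ of $\llbracket\psi\rrbracket$ and $\rho_v$ of $\llbracket\vp\rrbracket$, together with a positive representative $\sigma_u$ of $\langle\vp\rangle$, uses Corollary~\ref{ordering} to get $\theta_\xi \leq \sigma_u + \rho_v$, and then multiplies by the indicator of the pluripolar set $\{\xi=-\infty\}\cup\{v=-\infty\}$; since $\sigma_u = \langle\sigma_u\rangle$ does not charge pluripolar sets, this kills the $\sigma_u$ term and yields $\theta_\xi \leq \rho_v$, whence $\llbracket\psi\rrbracket \leq \llbracket\vp\rrbracket$ by Corollary~\ref{ordering} again.

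Your argument sidesteps all of this by noting that exactly that work has already been packaged into Proposition~\ref{char_singular_part}(2): once you know $\llbracket\psi\rrbracket$ is plurisupported and $\llbracket\psi\rrbracket \leq [\psi] \leq [\vp]$, maximality of $\llbracket\vp\rrbracket$ among plurisupported types below $[\vp]$ finishes immediately. There is no circularity, since the proof of Proposition~\ref{char_singular_part}(2) uses only Corollary~\ref{ordering} and part~(1). So both proofs rest on the same foundation (Lemma~\ref{subtraction} via Corollary~\ref{ordering}); yours simply invokes the consequence already drawn, while the paper re-runs the current-theoretic computation by hand.
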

\begin{proof}
We have:
\[
\llbracket \psi\rrbracket \leq \langle \vp\rangle + \llbracket \vp \rrbracket.
\]
Pick quasi-psh representatives of each singularity type above, denoted $\xi, u,$ and $v$, respectively, and smooth forms, denoted $\theta, \sigma,$ and $\rho$, respectively, such that $\theta_\xi$ and $\rho_v$ are plurisupported, $\sigma_u \geq 0$, and $u + v \leq \xi$. 

By Corollary \ref{ordering}, we have 
\[
\theta_\xi \leq \sigma_u + \rho_v.
\]
Let $A = \{\xi = -\infty\}\cup\{v = -\infty\}$. This is a complete pluripolar set, and since:
\[
\sigma_u = \langle \sigma_u\rangle,
\]
we have $\chi_A \sigma_u = 0$, by \cite{BEGZ}. On the other hand $\chi_A\theta_\xi = \theta_\xi$ and $\chi_A\rho_v = \rho_v$, so that actually:
\[
\theta_\xi \leq \rho_v.
\]
Corollary \ref{ordering} now implies $[\xi] \leq [v]$, as desired.
\end{proof}

\begin{prop}\label{sing_add}
The pluripolar Siu decomposition is additive; if $[\vp]$ and $[\psi]$ are singularity types of two quasi-psh functions, then:
\[
\langle \vp + \psi\rangle = \langle \vp\rangle + \langle \psi\rangle\quad \text{ and }\quad \llbracket \vp + \psi\rrbracket = \llbracket \vp\rrbracket + \llbracket \psi\rrbracket.
\]
\end{prop}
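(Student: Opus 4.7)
The natural strategy is to prove the statement at the current level, using the explicit formula $\llbracket T\rrbracket = \chi_{\{\vp=-\infty\}}T$ from the definition of the pluripolar Siu decomposition. Pick representatives $\vp\in\PSH(X,\theta)$ and $\psi\in\PSH(X,\sigma)$, and set $T:=\theta_\vp$, $S:=\sigma_\psi$, so that $T+S = (\theta+\sigma)_{\vp+\psi}$ represents $[\vp+\psi]$. Since $[\vp+\psi]=[\vp]+[\psi]$ is tautological, it suffices to prove the additivity of the singular part $\llbracket\cdot\rrbracket$; additivity of $\langle\cdot\rangle$ will follow by subtraction.

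For the singular part, the key observation is that $\vp$ and $\psi$ are bounded above on the compact $X$, so
\[
\{\vp+\psi=-\infty\} \;=\; \{\vp=-\infty\}\cup\{\psi=-\infty\} \;=:\; A\cup B,
\]
a complete pluripolar set. Consequently
\[
\llbracket T+S\rrbracket \;=\; \chi_{A\cup B}(T+S) \;=\; \chi_{A\cup B}T + \chi_{A\cup B}S,
\]
and I would decompose each summand via its own pluripolar Siu decomposition. The non-pluripolar piece contributes nothing: $\chi_{A\cup B}\langle T\rangle = 0$, since $\langle T\rangle$ does not charge pluripolar sets (the standard BEGZ fact already invoked in the proof of Proposition \ref{sing_ordering}). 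The plurisupported piece contributes all of itself: $\chi_{A\cup B}\llbracket T\rrbracket = \llbracket T\rrbracket$, because $\llbracket T\rrbracket = \chi_A T$ is already supported in $A\subseteq A\cup B$. Hence $\chi_{A\cup B}T = \llbracket T\rrbracket$, and the analogous computation for $S$ yields $\llbracket T+S\rrbracket = \llbracket T\rrbracket + \llbracket S\rrbracket$.

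Passing back to singularity types gives $\llbracket \vp+\psi\rrbracket = \llbracket \vp\rrbracket + \llbracket \psi\rrbracket$, and subtracting this from $[\vp+\psi]=[\vp]+[\psi]$ completes the proof of additivity for $\langle\cdot\rangle$. The only nontrivial ingredient is the BEGZ no-charging property of $\langle T\rangle$ on pluripolar sets; beyond that the argument is formal bookkeeping with characteristic functions, in the same spirit as the proofs of Propositions \ref{projection} and \ref{sing_ordering}. There is no serious obstacle.
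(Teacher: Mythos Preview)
Your proof is correct, and it is genuinely more direct than the paper's. The paper argues at the level of singularity types: it first uses the abstract characterization in Proposition~\ref{char_singular_part}(2) to get the inequality $\llbracket\vp\rrbracket + \llbracket\psi\rrbracket \leq \llbracket\vp+\psi\rrbracket$, then invokes the subtraction Lemma~\ref{subtraction} to form the plurisupported difference $[\xi]$, rewrites things so that $[\xi] \leq \langle\vp\rangle + \langle\psi\rangle$, and finally appeals to Propositions~\ref{projection}, \ref{char_regular_component}, \ref{sing_ordering} to conclude $\llbracket\langle\vp\rangle + \langle\psi\rangle\rrbracket = 0$. That last equality, however, when unwound, is precisely the computation you carry out: a sum of two currents that do not charge pluripolar sets again does not charge pluripolar sets. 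Your approach short-circuits all of this by working directly with the defining formula $\llbracket T\rrbracket = \chi_{\{\vp=-\infty\}}T$ and the single BEGZ input, getting both inequalities at once. What the paper's route buys is that it illustrates how the structural Propositions~\ref{char_singular_part}--\ref{sing_ordering} interlock; what your route buys is transparency and brevity, and it makes clear that nothing beyond the no-charging property is actually needed.
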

\begin{proof}
It is clear from $(2)$ of Proposition \ref{char_singular_part} that:
\[
\llbracket \vp \rrbracket + \llbracket \psi\rrbracket \leq \llbracket \vp+ \psi\rrbracket.
\]
Let $[\xi] = \llbracket \vp+ \psi\rrbracket - \llbracket \vp \rrbracket -  \llbracket\psi\rrbracket$ denote the difference; by Lemma \ref{subtraction} it will contain a quasi-psh representative and be plurisupported, so that $[\xi] = \llbracket\xi\rrbracket$. We have:
\[
(\llbracket \vp + \psi\rrbracket - [\xi]) + \langle \vp\rangle + \langle \psi\rangle = [\vp] + [\psi] = [\vp + \psi] = \llbracket \vp + \psi\rrbracket + \langle \vp + \psi\rangle,
\]
so that
\[
\langle \vp \rangle + \langle\psi\rangle = \langle\vp + \psi\rangle + [\xi].
\]
By Propositions \ref{projection}, \ref{char_regular_component}, and \ref{sing_ordering} we have:
\[
[\xi] = \llbracket \xi\rrbracket \leq \llbracket \langle \vp \rangle + \langle \psi \rangle\rrbracket = 0,
\]
so we are done.
\end{proof}


\subsection{Structure of Plurisupported Currents}\label{subsect_maxmin}


We show that one can take maxima and minima of plurisupported currents. We use these to define what if means for a plurisupported current to be irreducible, which turns out to be equivalent to it being extremal. Then, we provide an example of a plurisupported current which cannot be decomposed as an infinite sum of extremal plurisupported currents.

Suppose $[\vp], [\psi]$ are plurisupported singularity types and $\theta_\vp$ and $\sigma_\psi$ are two plurisupported representatives. It is easy to see that the singularity type $[\max\{\vp, \psi\}]$ is well-defined, and that it is the largest singularity type such that:
\[
[\max\{\vp, \psi\}] \leq [\vp]\quad \text{ and }\quad [\max\{\vp, \psi\}] \leq [\psi].
\]
Suppose that $[v]$ is plurisupported and $[v] \leq [\max\{\vp, \psi\}]$. Then by Proposition \ref{sing_ordering} we have that:
\[
\llbracket v\rrbracket = [v] \leq \llbracket \max\{\vp, \psi\} \rrbracket \leq \llbracket \vp\rrbracket = [\vp].
\]
Thus, $\llbracket \max\{\vp, \psi\}\rrbracket$ is the largest plurisupported singularity type less than both $[\vp]$ and $[\psi]$. It is also clear from Corollary \ref{ordering} that if $\rho_\xi$ is a plurisupported representative of $\llbracket \max\{\vp, \psi\}\rrbracket$, then $\rho_\xi$ is the largest plurisupported current such that $\rho_\xi \leq \theta_\vp$ and $\rho_\xi \leq \sigma_\psi$.

\begin{rmk}\label{minimum_remark}
It might be more appropriate to denote $\llbracket \max\{\vp, \psi\}\rrbracket$ by something like $\min\{[\vp], [\psi]\}$, but in the interest of not adding more notation, we have chosen not to do this. Note however that the existence of a minimal current is quite special --  in general the minimum of two currents is not a current at all (it usually fails to be additive, e.g. the minimum of two delta functions).

Also, note that we almost always have that $[\max\{\vp, \psi\}] > \llbracket \max\{\vp, \psi\}\rrbracket$. Consider for instance the case when $\theta_\vp$ and $\sigma_\psi$ are the currents of integration along two irreducible divisors -- generically, $[\max\{\vp, \psi\}]$ will have log poles at exactly a finite number of isolated points, while $\llbracket \max\{\vp, \psi\}\rrbracket$ will be $[0]$.
\end{rmk}

We can also define a maximum of two plurisupported currents by considering $[P_\omega(\min\{\vp, \psi\})]$ for some sufficiently large K\"ahler form $\omega$. We first need the following lemma, which is essentially due to Berman (modulo our usage of Lemma \ref{subtraction}), and will be generalized by Theorem \ref{transfer}:
\begin{lem}\label{hoponpop}
Suppose that $\psi\in \PSH(X,\sigma)$ is plurisupported and that $f \leq C < \infty$ is a difference of quasi-psh functions. Suppose additionally that the envelope:
\[
P_\theta(f + \psi) = \sup\{u\in\PSH(X,\theta)\ |\ u\leq f + \psi\}
\]
is not $-\infty$. Then we have:
\[
P_{\theta}(f + \psi) = \psi + P_{\theta - \sigma}(f).
\]
\end{lem}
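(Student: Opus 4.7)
The plan is to prove the two inequalities separately, with both halves being short once one recognizes the role of Lemma \ref{subtraction}. Write $u := P_\theta(f + \psi)$ and $v := P_{\theta-\sigma}(f)$; by assumption $u \not\equiv -\infty$, and we want to show $u = \psi + v$.

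For the inequality $\psi + v \leq u$, I would simply observe that $v \in \PSH(X,\theta-\sigma)$ (assuming a priori that $v \not\equiv -\infty$, which will follow from the other inequality anyway) and $\psi \in \PSH(X,\sigma)$, so $\psi + v \in \PSH(X,\theta)$. Since $v \leq f$, we have $\psi + v \leq \psi + f$, so $\psi + v$ is one of the functions competing in the envelope defining $u$, which gives $\psi + v \leq u$.

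For the reverse inequality $u - \psi \leq v$, the point is that the hypothesis $f \leq C$ forces $u \leq f + \psi \leq \psi + C$, i.e., $u$ is more singular than $\psi$. Since $\sigma_\psi$ is plurisupported, Lemma \ref{subtraction} applies and yields that $u - \psi$ is $(\theta - \sigma)$-psh. Combined with $u - \psi \leq f$, this shows $u - \psi$ is a candidate in the envelope defining $v$, so $u - \psi \leq v$. As a bonus, this shows $v \not\equiv -\infty$, which retroactively justifies the first inequality.

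The only real subtlety, and the thing that would be the main obstacle without Lemma \ref{subtraction}, is extracting a $(\theta-\sigma)$-psh function from $u - \psi$: without the plurisupported hypothesis on $\psi$ one would have no reason to expect that subtracting $\psi$ from a $\theta$-psh function more singular than $\psi$ produces a quasi-psh function at all. Once Lemma \ref{subtraction} is invoked, the boundedness assumption $f \leq C$ is exactly what allows us to verify its hypothesis that $u$ is more singular than $\psi$.
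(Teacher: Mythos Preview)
Your proof is correct and essentially identical to the paper's: both directions are handled exactly as you describe, with Lemma~\ref{subtraction} doing the work for the inequality $u-\psi\leq v$ and the trivial observation $\psi+v\in\PSH(X,\theta)$ handling the other. The only cosmetic difference is that the paper argues with generic candidates in each envelope and then takes suprema, whereas you work directly with the envelopes themselves; the paper also explicitly cites Hartog's Lemma when passing from $u\leq f+\psi$ to $u-\psi\leq f$ (to handle the pluripolar set $\{\psi=-\infty\}$), a point you pass over silently.
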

\begin{proof}
Suppose that $u\in\PSH(X,\theta)$ is such that $u \leq f + \psi \leq C + \psi$. By Lemma \ref{subtraction}, we have that:
\[
u - \psi \in \PSH(X, \theta - \sigma),
\]
and $u - \psi \leq f$ by Hartog's Lemma. Thus, $u - \psi \leq P_{\theta - \sigma}(f)$, so taking the supremum over all such $u$ establishes one inequality.

For the other inequality, let $v \in \PSH(X, \theta - \sigma)$ be such that $v \leq f$. Then $v + \psi \in \PSH(X, \theta)$ and $v + \psi \leq f + \psi$, so:
\[
v + \psi \leq P_\theta(f+\psi).
\]
Taking the supremum over all such $v$ concludes the lemma.
\end{proof}

\begin{prop}\label{maximum}
Suppose that $[\vp], [\psi]$ are plurisupported, and let $[\xi] = \llbracket\max\{\vp, \psi\}\rrbracket$. Let $\theta_\vp, \sigma_\psi,$ and $\rho_\xi$ be plurisupported representatives of their respective singularity types. Then:
\[
P_{\theta + \sigma - \rho}(\min\{\vp, \psi\}) = \psi + P_{\theta - \rho}(\min\{0, \vp - \psi\}) = \psi + P_{\theta - \rho}(\vp -\psi) + C= \vp + \psi - \xi + C,
\]
where here $C$ is a constant whose exact value might change in each equality. If $\omega \geq \theta + \sigma - \rho$ is K\"ahler, then the above still holds up to taking singularity types:
\begin{equation}\label{formuler}
[P_{\omega}(\min\{\vp, \psi\})] = \llbracket \vp + \psi - \xi\rrbracket = [\vp + \psi - \xi].
\end{equation}
\end{prop}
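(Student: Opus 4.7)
After normalizing so that $\vp, \psi \leq 0$, the strategy is to prove the chain of equalities in turn, with the final identification $P_{\theta - \rho}(\vp - \psi) = \vp - \xi + C$ being the core technical step, and then deduce the K\"ahler statement.

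\textbf{Step 1 (Candidacy of $\vp+\psi-\xi$).} The characterization Proposition \ref{char_singular_part} gives $[\xi] \leq [\max\{\vp,\psi\}]$; since $\max\{\vp,\psi\} \geq \vp + \psi$ (by the normalization), we have $\vp + \psi \leq \xi + C$, so $\vp + \psi$ is more singular than the plurisupported $\xi$. Hence Lemma \ref{subtraction} places $\vp + \psi - \xi \in \PSH(X, \theta + \sigma - \rho)$. The chain $[\xi]\leq [\max\{\vp,\psi\}]\leq [\vp], [\psi]$ yields $\xi \geq \vp + C$ and $\xi \geq \psi + C$, so $\vp + \psi - \xi \leq \min\{\vp, \psi\} + C$. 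This gives the $\geq$ direction of all the claimed equalities and confirms the envelope is not $-\infty$.

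\textbf{Step 2 (First equality via Lemma \ref{hoponpop}).} Write $\min\{\vp, \psi\} = \psi + \min\{0, \vp - \psi\}$, noting $\min\{0, \vp - \psi\} = \vp - \max\{\vp, \psi\} \leq 0$ is a difference of quasi-psh functions. Applying Lemma \ref{hoponpop} with the plurisupported $\psi \in \PSH(X, \sigma)$ and target class $\theta + \sigma - \rho$ gives the first equality immediately.

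\textbf{Step 3 (Second equality, up to constant).} Clearly $P_{\theta - \rho}(\min\{0, \vp - \psi\}) \leq P_{\theta - \rho}(\vp - \psi)$. For the reverse, any candidate $u' \in \PSH(X, \theta - \rho)$ with $u' \leq \vp - \psi$ satisfies $u' \leq \sup_X P_{\theta - \rho}(\vp - \psi) < \infty$; subtracting $\max\{0, \sup_X P_{\theta - \rho}(\vp - \psi)\}$ from $u'$ produces a candidate $\leq 0$, hence $\leq \min\{0, \vp - \psi\}$, establishing equality up to a uniform constant.

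\textbf{Step 4 (Main identification).} It remains to prove $P_{\theta - \rho}(\vp - \psi) = \vp - \xi + C$. The $\geq$ direction follows from Step 1 (or directly: $\vp - \xi \in \PSH(X, \theta - \rho)$ by Lemma \ref{subtraction}, and $\vp - \xi \leq \vp - \psi + C$ from $\xi \geq \psi + C$). For the $\leq$ direction, given $u \in \PSH(X, \theta - \rho)$ with $u \leq \vp - \psi$, I must show $u + \xi \leq \vp + C$, i.e., $u + \xi \in \PSH(X, \theta)$ is more singular than $\vp$. Applying Lemma \ref{hoponpop} with plurisupported $\xi$ shows
\[
\xi + P_{\theta + \sigma - \rho}(\min\{\vp,\psi\}) = P_{\theta + \sigma}(\xi + \min\{\vp,\psi\}),
\]
so it suffices to identify the latter envelope with $\vp + \psi + C$. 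The lower bound $\vp + \psi - C \leq P_{\theta + \sigma}(\xi + \min\{\vp,\psi\})$ is Step 1 translated. For the upper bound, any admissible $u' \in \PSH(X, \theta + \sigma)$ satisfies $u' - \vp \in \PSH(X, \sigma)$ and $u' - \vp \leq \xi$ (by Lemma \ref{subtraction}, since $u' \leq \xi + \vp \leq M + \vp$), and we need to promote this to $u' - \vp \leq \psi + C$. This is the heart of the argument: using Proposition \ref{sing_ordering} and Proposition \ref{sing_add}, together with the extremality of $[\xi]$ as the largest plurisupported singularity type $\leq [\max\{\vp,\psi\}]$, one shows via Corollary \ref{ordering} that any plurisupported singularity type dominating both $[\vp]$ and $[\psi]$ must dominate $[\vp+\psi-\xi]$, giving the desired refinement.

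\textbf{Step 5 (K\"ahler version).} For $\omega \geq \theta + \sigma - \rho$ K\"ahler, the inclusion $\PSH(X, \theta + \sigma - \rho) \subseteq \PSH(X, \omega)$ yields $P_\omega(\min\{\vp, \psi\}) \geq P_{\theta + \sigma - \rho}(\min\{\vp, \psi\}) = \vp + \psi - \xi + C$, giving $[P_\omega(\min\{\vp,\psi\})] \leq [\vp+\psi-\xi]$. For the reverse, Proposition \ref{sing_ordering} applied to $P_\omega(\min\{\vp,\psi\}) \leq \vp, \psi$ gives $\llbracket P_\omega(\min\{\vp,\psi\})\rrbracket \geq [\vp], [\psi]$, and the minimality argument from Step 4 upgrades this to $\llbracket P_\omega(\min\{\vp,\psi\})\rrbracket \geq [\vp+\psi-\xi]$. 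Combined with the plurisupported $[\vp+\psi-\xi] = \llbracket \vp + \psi - \xi\rrbracket$, this closes the K\"ahler statement.

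\textbf{Main obstacle.} The technical crux is the upper bound in Step 4. A direct application of Lemma \ref{hoponpop} (via $P_{\theta + \sigma - \rho}(\vp) = \vp + V_{\sigma - \rho}$) only yields the weaker bound $u \leq \vp - \psi + V_{\sigma - \rho}$; sharpening this to $u \leq \vp - \xi + C$ requires exploiting the plurisupported structure of $\xi$ through Corollary \ref{ordering} and the maximality property defining $\xi$, rather than merely the positivity of the class $\sigma - \rho$.
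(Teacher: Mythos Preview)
Your Steps 1--3 are fine and track the paper's argument closely (the paper actually proves the equalities in reverse order, doing your Step 4 first, but this is cosmetic). The genuine gap is in Step 4, at exactly the point you flag as ``the heart of the argument.''

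You reduce to showing $u' - \vp \leq \psi + C$ for an arbitrary candidate $u' \in \PSH(X,\theta+\sigma)$ with $u' \leq \xi + \min\{\vp,\psi\}$, and then invoke the claim that ``any plurisupported singularity type dominating both $[\vp]$ and $[\psi]$ must dominate $[\vp+\psi-\xi]$.'' There are two problems. First, $u' - \vp$ is not plurisupported in general, so the claim does not apply to it; you would need to pass to $\llbracket u'\rrbracket$, which loses the pointwise bound you are after. Second, and more seriously, the claim itself is precisely the content of \eqref{formuler} --- it asserts that $[\vp+\psi-\xi]$ is the supremum (in the lattice of plurisupported singularity types) of $[\vp]$ and $[\psi]$. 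Invoking it here is circular, and it is not a consequence of Propositions \ref{sing_ordering}, \ref{sing_add}, or Corollary \ref{ordering} alone. Your Step 5 then inherits this gap, since it relies on the same ``minimality argument.''

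The paper's resolution is different and more direct: rather than working with an arbitrary candidate, it takes the pluripolar Siu decomposition of the \emph{envelope itself}, writing $P_{\theta-\rho}(\vp-\psi) = \eta + \zeta$ with $[\eta]$ plurisupported and $\llbracket\zeta\rrbracket = 0$. The lower bound $\vp - \xi - C \leq \eta + \zeta$ (your Step 1) combined with the upper bound $\eta + \zeta \leq \vp - \psi$ sandwiches $\vp - \eta$ between $\psi + \zeta$ and $\xi + \zeta + C$; applying $\llbracket\cdot\rrbracket$ and using $\llbracket\zeta\rrbracket = 0$ gives $[\psi] \geq [\vp-\eta] \geq [\xi]$. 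Since $[\vp - \eta]$ is itself plurisupported (as a difference of comparable plurisupported types) and also $\leq [\vp]$, the \emph{defining} maximality of $[\xi]$ forces $[\vp-\eta] = [\xi]$, hence $[\eta] = [\vp-\xi]$ and $[\zeta] = [0]$. A final application of Lemma \ref{hoponpop} with the trivial class upgrades this to the pointwise equality $P_{\theta-\rho}(\vp-\psi) = \vp - \xi + C$. The key point is that the Siu decomposition produces a plurisupported object $[\vp - \eta]$ to which the maximality of $[\xi]$ applies \emph{in the direction it was defined} (largest below both), avoiding the dual supremum statement you need.
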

\begin{proof}
Throughout the proof, we allow the additive constant $C$ to change line-by-line, to minimize notation. Note that $P_{\theta - \rho} (\vp - \psi)$ is not $-\infty$, as $\vp - \xi \in \PSH(X,\theta - \rho)$ by Lemma \ref{subtraction}. 

We will go in reverse order, and start by showing the last equality:
\[
P_{\theta - \rho} (\vp - \psi) = \vp - \xi - C.
\]
Since $\psi \leq \xi + C$, we have:
\[
P_{\theta - \rho}(\vp - \psi) \geq P_{\theta - \rho}(\vp - \xi - C) = \vp - \xi - C,
\]
by Lemma \ref{subtraction}. Let:
\[
P_{\theta - \rho} (\vp - \psi) = \eta + \zeta,
\]
where $\eta\in\llbracket P_{\theta - \rho}(\vp - \psi)\rrbracket$ is plurisupported and $\zeta\in\langle P_{\theta - \rho}(\vp - \psi)\rangle$. We have:
\begin{equation}\label{poopy2}
\vp - \xi - C \leq \eta + \zeta \leq \vp - \psi.
\end{equation}
It follows from the first inequality in \eqref{poopy2} that:
\[
\vp \leq \eta + C,
\]
so that $\vp - \eta$ is well-defined. Since $[\vp]$ is plurisupported, so too is $[\vp - \eta]$. Rearraigning \eqref{poopy2} then gives:
\[
\psi + \zeta \leq \vp - \eta \leq \xi + \zeta + C
\]
so that:
\[
[\psi + \zeta] \geq [\vp - \eta] \geq [\xi + \zeta].
\]
Using Propositions \ref{sing_ordering}, \ref{sing_add}, and \ref{projection}, the above implies that actually:
\[
[\psi] \geq [\vp-\eta] \geq [\xi];
\]
this follows from taking $\llbracket \cdot\rrbracket$ of each term, and then noting that $\llbracket \psi + \zeta\rrbracket = \llbracket \psi\rrbracket + \llbracket\zeta\rrbracket = [\psi]$, and similarly for $[\xi + \zeta]$.

Since we also clearly have $[\vp - \eta] \leq [\vp]$, we thus have that $[\vp - \eta] = [\xi]$ by the definition of $[\xi] = \llbracket\max\{\vp, \psi\}\rrbracket$. Hence we have that $\eta \leq \vp - \xi + C$. But:
\[
\vp - \xi - C \leq \eta + \zeta \leq \vp - \xi + C,
\]
as $\zeta \leq C$, and so we see $[\zeta] = [0]$ and 
\begin{equation}\label{terminate}
[\eta] = [P_{\theta-\rho}(\vp - \psi)] = [\vp - \xi].
\end{equation}
To conclude the last equality, define:
\[
u := P_{\theta - \rho}(\vp - \psi).
\]
Then by Lemma \ref{hoponpop},
\[
u = P_{\theta - \rho}(u) = P_{0}(u - \vp + \xi) + \vp - \xi = C + \vp - \xi,
\]
as needed.

We now show the second equality, namely that:
\[
P_{\theta-\rho}(\min\{0, \vp - \psi\}) = P_{\theta-\rho}(\vp - \psi) - C.
\]
It is clear that there is some $C \geq 0$ such that $P_{\theta-\rho}(\vp - \psi) - C \leq \min\{0, \vp-\psi\}$. In the other direction, note that definitionally $P_{\theta-\rho}(\min\{0, \vp - \psi\}) \leq \vp - \psi$, and so:
\[
P_{\theta-\rho}(\min\{0, \vp - \psi\}) \leq P_{\theta-\rho}(\vp - \psi),
\]
showing that $[P_{\theta-\rho}(\min\{0, \vp - \psi\})] = [P_{\theta-\rho}(\vp - \psi)] = [\vp - \xi]$. The same argument as in the last paragraph establishes the second inequality.

The first equality:
\[
P_{\theta + \sigma - \rho}(\min\{\vp, \psi\}) = \psi + P_{\theta - \rho}(\min\{0, \vp - \psi\})
\]
is just a direct application of Lemma \ref{hoponpop}, as $\min\{0, \vp-\psi\} = \vp - \psi - \max\{0, \vp-\psi\} \leq 0$.\\

To see \eqref{formuler}, note that since $\omega \geq \theta + \sigma - \rho$, we have that $\vp + \psi - \xi \in \PSH(X,\omega)$, and this is enough to repeat all of the above arguments except the one immediately after \eqref{terminate}, which converts equality of the singularity types into pointwise equality. This finishes the proof.

\end{proof}

\begin{rmk}
Equation \eqref{formuler} is the analogue of the familiar formula for functions:
\[
f + g = \max\{f, g\} + \min\{f, g\}.
\]
Given that we need to project $\min\{\vp, \psi\}$ onto the space of PSH functions in \eqref{formuler}, the fact that equality holds is perhaps surprising; compare for instance with the recent inequality \cite[Theorem. 5.4]{DDL4}, which is in general strict \cite[Remark 5.5]{DDL4}. The difference between the two situations is that our minimum $\llbracket\max\{\vp, \psi\}\rrbracket$ is also actually a projection, as noted in Remark \ref{minimum_remark}
\end{rmk}

Subtracting $\xi$ from both sides of Proposition \ref{maximum} gives us:
\begin{equation}\label{hereforafriend}
P_{\theta + \sigma - 2\rho}(\min\{\vp - \xi, \psi - \xi\}) = \vp + \psi - 2\xi,
\end{equation}
and implies that $\llbracket \max\{\vp - \xi, \psi - \xi\}\rrbracket = [0]$. For two random quasi-psh functions, one generally expects $u+v$ to be significantly more singular than $P(\min\{u, v\})$; this suggests that $\vp - \xi$ and $\psi - \xi$ are in some sense ``disjoint." This motivates the following definition:

\begin{defn}\label{essdisj}
We say two plurisupported singularity types $[\vp]$ and $[\psi]$ are {\bf essentially disjoint} if $\llbracket \max\{\vp, \psi\}\rrbracket = 0$.
\end{defn}

The above notion is relative, and one would like an absolute version.

\begin{defn}\label{irred}
We say that a plurisupported singularity type $[\vp]$ is {\bf extremal} if for any other plurisupported singularity type $[\psi]$, we have that $\llbracket \max\{\vp, \psi\}\rrbracket = c[\vp]$ for some $0 \leq c\leq 1$.
\end{defn}

It is easy to see that this is a simple reformulation of the usual definition of an extremal current ($T = T_1 + T_2\implies T_1 = cT_2$) in our setting.

\begin{rmk}
The above constant $c$ could probably be computed as some kind of ``relative type,"  similar to the work of Rashkovskii \cite{Rash}. This would likely be the appropriate replacement for the Lelong number in this very general setting.
\end{rmk}

The canonical example of an extremal plurisupported current is the current of integration along a divisor, by Federer's Support Theorem. One might hope that, similar to the case with divisors, one could decompose any plurisupported current into a countable sum of extremal, plurisupported currents. However, this is not the case, as the following one-dimensional example shows:

\begin{exmp}\label{not_irred}
As is well-known, sufficiently small generalized Cantor sets in $\C$ are polar sets \cite{Rans} (see also \cite{Car}). Fix such a Cantor set $E$ contained in the interval $[0, 1]$. By Evan's Theorem \cite{Rans}, there exists a measure $\mu$ supported on $E$, such that the Newton potential $p_\mu$ of $\mu$ is a subharmonic function on $\C$, harmonic on $\C\setminus E$, and $\equiv -\infty$ on $E$ (see the recent nice paper of Li \cite[Prop. 3.3]{Li2} for a complete, elementary proof of these facts). 

Let $E_{1, j}, \ldots, E_{2^j, j}$, $j \in \{0, 1, \ldots\}$ be the $2^j$ intervals in the $j^\text{th}$ step in the construction of $E$. For any $j$, we then have:
\[
\mu = \sum_{i=1}^{2^j}\chi_{E_{i,j}}\mu,
\]
and hence:
\[
p_\mu = \sum_{i=1}^{2^j} p_{\chi_{E_{i, j}}\mu}.
\]
Clearly each $p_{\chi_{E_{i,j}}\mu}$ is plurisupported, and equally clearly none of them are extremal, each being a sum of two smaller plurisupported currents with disjoint support. 

We now easily see that $[p_\mu]$ cannot be decomposed as a sum of extremal plurisupported singularity types -- suppose it could be:
\[
[p_\mu] = \sum_{k=1}^\infty [\vp_k].
\]
Then for any fixed $k$ and $j$, by definition we would have that $\llbracket \max\{\vp_k, p_{\chi_{E_{i,j}}\mu}\}\rrbracket = c_{i,j}[\vp_k]$ for some constant $0 \leq c_{i,j}\leq 1$. Since the $E_{i,j}$ are disjoint and cover $E$, we must have that there exists exactly one $i_{j,k}$ such that $c_{i_{j,k},j} = 1$ and all the other $c_{i,j} = 0$. Setting $E_j := E_{i_{j,k}, j}$, we have $[\vp_k] \leq [p_{\chi_{E_j}\mu}]$. This holds for any $j$, and so if $T_k$ is a plurisupported representative of $[\vp_k]$, we see that:
\[
\mathrm{Supp}(T_k) \subseteq \cap_{j} E_{j}.
\]
But this intersection can contain at most a single point, so that we must have $T_k = 0$ by the support theorem, a contradiction.
\end{exmp}

One can pull this current back to any appropriate product manifold to produce an example in higher dimensions.


\subsection{Mass Transfer via Plurisupported Currents}\label{subsect_Berman}


We will now use the pluripolar Siu decomposition to prove Theorem \ref{transfer}, which is an essentially optimal version of an observation originally due to Berman \cite{Be3}.

\begin{thm}\label{transfer}
Suppose that $\psi\in \PSH(X,\sigma)$ is plurisupported, $\eta$ is quasi-psh, and $f \leq C < \infty$ is a difference of quasi-psh functions. Suppose additionally that the envelope:
\[
P_\theta(f + \psi - \eta) = \sup\{u\in\PSH(X,\theta)\ |\ u\leq f + \psi - \eta\}
\]
is not $-\infty$. Let $\xi\in\PSH(X,\rho)$ be a plurisupported representative of $\llbracket \max\{\psi, \eta\}\rrbracket$. Then we have:
\[
P_{\theta}(f + \psi - \eta) = \psi - \xi + P_{\theta - \sigma + \rho}(f - \eta +\xi).
\]
In particular, if $[\sigma]$ is big, $\eta = V_\sigma$, and $f$ is quasi-psh with $[f] \leq [V_{\theta - \sigma + \rho}]$, then we have:
\begin{equation}\label{mass}
\int_X \langle (\theta + i\p\pbar P_\theta(f + \psi - V_\sigma))^n\rangle = \vol(\theta - P(\sigma)),
\end{equation}
where here $P(\sigma)$ is the positive part of the divisorial Zariski decomposition \cite{Bo3}.
\end{thm}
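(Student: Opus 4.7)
The plan is to prove the envelope identity first, then combine it with Boucksom's divisorial Zariski decomposition to compute the non-pluripolar mass.

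The central structural observation is that $\sigma_\psi - \rho_\xi$ is a positive plurisupported $(1,1)$-current: positivity follows from Corollary \ref{ordering} applied to $[\xi] \leq [\psi]$ (since $\xi$ represents $\llbracket \max\{\psi,\eta\}\rrbracket \leq [\psi]$), while its support is contained in $\{\psi=-\infty\}$. Equivalently, by Lemma \ref{subtraction}, $\psi - \xi \in \PSH(X, \sigma-\rho)$ is itself a plurisupported potential, so it can play the role of $\psi$ in Lemma \ref{hoponpop}.

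For the $(\geq)$ direction of the envelope identity, I take any $v \in \PSH(X, \theta-\sigma+\rho)$ with $v \leq f - \eta + \xi$: then $v + (\psi - \xi) \in \PSH(X,\theta)$ and is bounded above by $f + \psi - \eta$, so $\psi - \xi + P_{\theta-\sigma+\rho}(f-\eta+\xi) \leq P_\theta(f + \psi - \eta)$. For the $(\leq)$ direction, I rewrite $f + \psi - \eta = g + (\psi - \xi)$ with $g := f + \xi - \eta$, and apply Lemma \ref{hoponpop} with the plurisupported $\psi - \xi$ to obtain $P_\theta(g + (\psi-\xi)) = (\psi-\xi) + P_{\theta-\sigma+\rho}(g)$. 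The only caveat is that $g$ is not uniformly bounded above (since $-\eta$ is unbounded above), but this is handled by the truncation $\eta_k := \max(\eta, -k)$: applying Lemma \ref{hoponpop} to the bounded $g_k := f + \xi - \eta_k$ and passing to the limit $k \to \infty$ gives the identity, using monotone convergence of envelopes and the hypothesis $P_\theta(f+\psi-\eta) \not\equiv -\infty$ to ensure the limits remain finite.

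For the mass identity \eqref{mass}, specializing to $\eta = V_\sigma$ the identity reads $P_\theta(f + \psi - V_\sigma) = (\psi - \xi) + v$, where $v := P_{\theta - \sigma + \rho}(f - V_\sigma + \xi)$. Since the non-pluripolar product ignores plurisupported currents (by its very definition in \cite{BEGZ}), adding the plurisupported $\sigma_\psi - \rho_\xi$ to $(\theta-\sigma+\rho)_v$ does not affect the mass, so
\[
\int_X \langle \theta^n_{P_\theta(f + \psi - V_\sigma)} \rangle \;=\; \int_X \langle (\theta - \sigma + \rho)^n_v \rangle.
\]
Now Boucksom's divisorial Zariski decomposition \cite{Bo3} gives $[\sigma] = [P(\sigma)] + [N(\sigma)]$ with $\llbracket V_\sigma \rrbracket = [N(\sigma)]$, and $\xi$ is a plurisupported representative of the overlap $\min([\psi], [N(\sigma)])$. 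Thus $V_\sigma - \xi$ is a residual plurisupported potential, and applying the transfer principle a second time — this time to absorb $V_\sigma - \xi$ into the cohomology class of $v$ — shifts the effective class from $[\theta - \sigma + \rho]$ to $[\theta - \sigma + \rho] + [N(\sigma) - \rho] = [\theta - P(\sigma)]$. The hypothesis $[f] \leq [V_{\theta - \sigma + \rho}]$ is calibrated precisely to ensure $v$ attains minimal singularities in this shifted class, so by the BEGZ characterization of volume in terms of non-pluripolar mass of envelopes with minimal singularities, the integral equals $\vol(\theta - P(\sigma))$.

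The main obstacle is the final cohomological bookkeeping. The residual plurisupported piece $V_\sigma - \xi$ is a difference of quasi-psh functions, not itself quasi-psh, so the second transfer application is more delicate than the first; one must verify both that it can be absorbed correctly into the class shift $[\sigma - \rho] \to [P(\sigma)]$, and that the bound on $[f]$ prevents $v$ from being strictly more singular than $V_{\theta - P(\sigma)}$ (which would yield a strictly smaller mass). This is where the hypothesis that $\eta$ has small unbounded locus (in the envelope identity) and the specific minimality of $V_\sigma$ together do the work of making the two plurisupported absorptions line up to produce exactly the positive part $P(\sigma)$.
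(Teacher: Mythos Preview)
Your argument has two genuine gaps, one in each half.

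\textbf{The envelope identity.} The $(\geq)$ direction is fine and matches the paper. For $(\leq)$, your truncation $\eta_k = \max(\eta,-k)$ does not work as written. First, to apply Lemma~\ref{hoponpop} at level $k$ you need $P_\theta(f+\psi-\eta_k)\not\equiv -\infty$, but since $-\eta_k \leq -\eta$ this is a \emph{smaller} obstacle than $f+\psi-\eta$, and the hypothesis only guarantees the latter envelope is finite. Second, and more seriously, envelopes do not commute with increasing limits of obstacles: if $u\in\PSH(X,\theta)$ satisfies $u\leq f+\psi-\eta$, there is no reason $u\leq f+\psi-\eta_k$ (on $\{\eta\leq -k\}$ the truncated obstacle equals $f+\psi+k$, which can be very negative where $\psi$ is singular), so you cannot conclude $u\leq\lim_k P_\theta(f+\psi-\eta_k)$. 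The paper avoids this entirely: it shows directly that any $u\leq f+\psi-\eta$ satisfies $[u]\geq[\psi-\xi]$, using that $u\leq C+(\psi-\xi)-\max\{\eta-\xi,\psi-\xi\}$ together with the essential disjointness $\llbracket\max\{\eta-\xi,\psi-\xi\}\rrbracket=[0]$ from \eqref{hereforafriend}. Once $[u]\geq[\psi-\xi]$ is known, Lemma~\ref{subtraction} gives $u-(\psi-\xi)\in\PSH(X,\theta-\sigma+\rho)$ and the proof of Lemma~\ref{hoponpop} goes through verbatim. This is the key new ingredient beyond Lemma~\ref{hoponpop}, and your truncation does not substitute for it.

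\textbf{The mass identity.} You have overcomplicated this and made an error. When $\eta=V_\sigma$, since $\psi\in\PSH(X,\sigma)$ we have $[\psi]\geq[V_\sigma]$, so $[\max\{\psi,V_\sigma\}]=[V_\sigma]$ and hence $\llbracket\max\{\psi,V_\sigma\}\rrbracket=\llbracket V_\sigma\rrbracket=[N(\sigma)]$ by Lemma~\ref{mnef}. Thus $[\rho]=[N(\sigma)]$ and $[\theta-\sigma+\rho]=[\theta-P(\sigma)]$ \emph{on the nose}: no second transfer is needed. Moreover $V_\sigma-\xi$ is \emph{not} plurisupported; its singularity type is $\langle V_\sigma\rangle$, the regular part, and in particular it is bounded above. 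This boundedness is exactly what is used: $f-V_\sigma+\xi\geq f-\sup_X(V_\sigma-\xi)\geq V_{\theta-\sigma+\rho}-C$, so the envelope $P_{\theta-\sigma+\rho}(f-V_\sigma+\xi)$ has minimal singularity type, and \eqref{mass} follows from \cite{BEGZ}. (Also, there is no small-unbounded-locus hypothesis in Theorem~\ref{transfer}; you are thinking of Theorem~\ref{ppbound}.)
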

\begin{proof}
Let $u\in \PSH(X,\theta)$ be such that:
\[
u \leq f + \psi - \eta \leq C+ (\psi - \xi) - (\eta - \xi).
\]
We wish to show that $\vp := u - \psi + \xi \leq C$; if this is true, one can simply copy the proof of Proposition \ref{hoponpop}.

Since $u$ is bounded from above, we have that $\vp \leq C -\psi + \xi$. Combining this with the above expression gives that $\vp \leq C - \max\{\eta - \xi, \psi - \xi\}$. Replacing $\vp$ and rearranging gives:
\[
u + \max\{\eta - \xi, \psi - \xi\} \leq C + \psi - \xi,
\] 
or
\begin{equation}\label{placeholder}
[u + \max\{\eta - \xi, \psi - \xi\}] \geq [\psi - \xi].
\end{equation}
By \eqref{hereforafriend} (located immediately after Proposition \ref{maximum}), we have that $\llbracket \max\{\eta - \xi, \psi - \xi\}\rrbracket = [0]$ (stictly speaking, one needs to replace $\eta$ with a quasi-psh representative of $\llbracket \eta\rrbracket$ -- we skip the details). We now conclude in a manner similar to the proof of Proposition \ref{maximum}; namely, taking $\llbracket \cdot \rrbracket$ of \eqref{placeholder} gives:
\[
[u] \geq \llbracket u \rrbracket \geq [\psi - \xi],
\]
and so we are done by Proposition \ref{ordering}.

To see \eqref{mass}, recall that $\llbracket V_\sigma\rrbracket$ produces exactly the negative part of the divisorial Zariski decomposition of $\sigma$ (Lemma \ref{mnef}), and that $V_{\theta - \sigma + \rho}$ has minimal singularities in $[\theta - \sigma + \rho]$. Thus, if $[f] \leq [V_{\theta - \sigma + \rho}]$, we have:
\[
f - V_\sigma + \xi + \sup_X (V_\sigma - \xi)\geq f \geq V_{\theta - \sigma + \rho} - C
\]
and it follows that $[P_{\theta - \sigma + \rho}(f -V_\sigma + \xi)] = [V_{\theta - \sigma + \rho}]$. \eqref{mass} now follows from \cite{BEGZ}.
\end{proof}

\begin{lem}\label{mnef}
Suppose that $[\sigma]$ is a big class. Then $\llbracket V_\sigma\rrbracket = N(\sigma)$, the negative part of the divisorial Zariski decomposition.
\end{lem}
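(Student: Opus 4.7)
The plan is to show $\llbracket V_\sigma\rrbracket \geq N(\sigma)$ and $\llbracket V_\sigma\rrbracket \leq N(\sigma)$ separately, viewing $N(\sigma)$ as the plurisupported singularity type given by any divisorial potential $\psi_N$ corresponding to the effective $\mathbb{R}$-divisor $N(\sigma) = \sum_D \lambda_D D$ (with $\lambda_D := \nu([\sigma], D)$).

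For the first inequality, since $[\sigma]$ is big the envelope $V_\sigma$ has minimal singularities in $\PSH(X,\sigma)$, and a standard consequence of the definition of $\nu([\sigma],D)$ (see \cite{Bo3}) is that $\nu(V_\sigma,D) = \lambda_D$ for every prime divisor $D$. Siu's classical decomposition applied to $T := \sigma + i\p\pbar V_\sigma$ therefore yields $T \geq \sum_D \lambda_D \llbracket D\rrbracket = N(\sigma)$ as currents, so Proposition~\ref{char_singular_part}(1) and Corollary~\ref{ordering} combine to give $N(\sigma) \leq \llbracket V_\sigma\rrbracket$ as singularity types.

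For the reverse inequality, I plan to show that the closed positive $(1,1)$-current $S := \llbracket T\rrbracket - N(\sigma) \geq 0$, which is plurisupported (being supported on $\{V_\sigma = -\infty\}$), vanishes identically. First, comparing the classical Siu decomposition $T = R + N(\sigma)$ (where $R$ has no generic divisorial Lelong numbers) against the pluripolar Siu decomposition $T = \langle T\rangle + \llbracket T\rrbracket$, and using that $\nu(\langle T\rangle,D) = 0$ for every prime $D$ (either because $D \subseteq \{V_\sigma = -\infty\}$ generically, so $\langle T\rangle$ is supported off $D$, or because $\lambda_D = 0$ already), one deduces $\nu(\llbracket T\rrbracket,D) = \lambda_D$ and hence $\nu(S,D) = 0$ for every prime divisor $D$. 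Next, I would invoke Boucksom's theorem that, for $[\sigma]$ big, the polar set $\{V_\sigma = -\infty\}$ is contained in the non-K\"ahler locus $E_{nK}([\sigma])$, which is a proper analytic subvariety of $X$ whose codimension-one components are precisely the components of $\mathrm{Supp}(N(\sigma))$. Since $\mathrm{Supp}(S) \subseteq \{V_\sigma = -\infty\}$ and $S$ has no divisorial mass, $\mathrm{Supp}(S)$ lies in an analytic subset of codimension $\geq 2$, hence of Hausdorff dimension $\leq 2n-4$, and Federer's Support Theorem (recalled in the introduction) then forces $S \equiv 0$.

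The main obstacle is this final analytic input from Boucksom's theory of divisorial Zariski decompositions: without the identification of the polar locus of $V_\sigma$ as an analytic subvariety whose non-divisorial part has codimension at least two, one cannot exclude ``Wermer-type'' non-divisorial plurisupported components of $\llbracket V_\sigma\rrbracket$, which would produce a genuinely nontrivial $S$ invisible to the divisorial Siu decomposition. Everything else is purely formal, relying only on Proposition~\ref{char_singular_part}, Corollary~\ref{ordering}, and the interaction of Lelong numbers with Siu's decomposition.
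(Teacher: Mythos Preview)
Your proposal is correct and follows essentially the same route as the paper: both use Boucksom's result that $\{V_\sigma = -\infty\}\subseteq E_{nK}(\sigma)$ is an analytic set, apply Federer's support theorem to conclude that $\llbracket V_\sigma\rrbracket$ (or your residual $S$) is divisorial, and then identify the divisorial coefficients via $\nu(V_\sigma,D)=\nu([\sigma],D)$. The only cosmetic difference is that the paper does this in one stroke---arguing directly that $\llbracket V_\sigma\rrbracket$ is divisorial with coefficients $\nu(V_\sigma,D)$---whereas you split into two inequalities and subtract off $N(\sigma)$ first; your intermediate claim that $\nu(\langle T\rangle,D)=0$ is more cleanly justified by noting that otherwise Siu would force $\langle T\rangle$ to charge the pluripolar set $D$.
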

\begin{proof}
By Boucksom \cite{Bo3}, $V_\sigma$ is bounded on $X\setminus E_{nK}(\sigma)$, the ample locus of $[\sigma]$. Hence, $\llbracket V_\sigma\rrbracket$ is supported on $\{V_\sigma = -\infty\}\subseteq E_{nK}(\sigma)$, which we recall is an analytic set; thus, by Federer's support theorem, $\llbracket V_\sigma \rrbracket$ must be divisorial. It follows that $\llbracket V_\sigma\rrbracket$ is supported on those divisors in $E_{nK}(\sigma)$ along which $V_\sigma$ has positive, generic Lelong number -- this exactly characterizes the negative part of the divisorial Zariski decomposition, by \cite[Prop. 3.6, Defn. 3.7]{Bo3}.
\end{proof}


\subsection{Continuity of the Pluripolar Siu Decomposition}\label{subsect_Banach}


We now study the continuity of the singular part of the pluripolar Siu decomposition. The main result of this subsection is that, for plurisupported currents, $d_\mathcal{S}$-convergence in the sense of Darvas-Di Nezza-Lu \cite{DDL4} can be detected by an integral norm, which turns the space of (differences of) plurisupported currents into a Banach space, thanks to the completeness result \cite[Theorem 4.9]{DDL4}. Moreover, we show that for sequences of plurisupported singularity types, $d_\mathcal{S}$-convergence implies strong convergence of the associated plurisupported currents. This sheds some light on the geometric nature of the $d_\mathcal{S}$ metric, at least in this special case.\\

We begin with an obvious preparatory lemma, regarding finding minimal representatives for a difference of plurisupported currents.

\begin{lem}\label{convenient}
Suppose that $[\vp], [\psi]$ are plurisupported singularity types. Then there exists a unique essentially disjoint pair of plurisupported singularity types $[u]$ and $[v]$ such that:
\[
[u-v] = [\vp - \psi].
\]
Moreover, for any other pair of plurisupported singularity types $[\eta], [\xi]$ with $[\eta - \xi] = [\vp - \psi]$, we have
\[
[u] \leq [\eta]\ \ \ \text{ and }\ \ \ [v] \leq [\xi].
\]
\end{lem}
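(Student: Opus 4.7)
The plan is to construct the pair explicitly as $[u] := [\vp - \xi]$ and $[v] := [\psi - \xi]$, where $\xi$ is a plurisupported representative of $\llbracket \max\{\vp, \psi\}\rrbracket$, and then verify the three required properties. Since $\llbracket \max\{\vp, \psi\}\rrbracket \leq \llbracket \vp \rrbracket = [\vp]$ and $\leq [\psi]$ by Proposition \ref{sing_ordering} (using that $\vp$ and $\psi$ are themselves plurisupported), Lemma \ref{subtraction} ensures $\vp - \xi$ and $\psi - \xi$ are honest quasi-psh functions. Applying the additivity of the pluripolar Siu decomposition (Proposition \ref{sing_add}) to $\vp = \xi + (\vp - \xi)$ and $\psi = \xi + (\psi - \xi)$ gives $\llbracket \vp - \xi\rrbracket = [\vp - \xi]$ and similarly for $\psi - \xi$, so $[u]$ and $[v]$ are indeed plurisupported. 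The identity $[u - v] = [\vp - \psi]$ is immediate, and essential disjointness $\llbracket \max\{u, v\}\rrbracket = [0]$ is precisely the content of equation \eqref{hereforafriend} recorded just after Proposition \ref{maximum}.

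For minimality, given another plurisupported pair $[\eta], [\xi']$ with $[\eta - \xi'] = [\vp - \psi]$, additivity yields $[\eta] + [\psi] = [\xi'] + [\vp]$, so after choosing representatives we may assume $\eta + \psi = \xi' + \vp + O(1)$. The elementary identity $\max\{a + c, b + c\} = \max\{a, b\} + c$ then gives
\[
\max\{\eta, \xi'\} + \psi = \max\{\eta + \psi,\, \xi' + \psi\} = \xi' + \max\{\vp, \psi\} + O(1),
\]
so that $[\max\{\eta, \xi'\}] + [\psi] = [\xi'] + [\max\{\vp, \psi\}]$ as singularity types. Applying $\llbracket \cdot \rrbracket$ and using additivity together with $[\xi] = \llbracket \max\{\vp, \psi\}\rrbracket$, we obtain
\[
\llbracket \max\{\eta, \xi'\}\rrbracket + [\psi] = [\xi'] + [\xi],
\]
which rearranges to $[\xi'] = \llbracket \max\{\eta, \xi'\}\rrbracket + [v]$. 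Since the first summand is a plurisupported singularity type (hence $\geq [0]$), this forces $[v] \leq [\xi']$, and the symmetric argument gives $[u] \leq [\eta]$. Uniqueness then follows: if $[\eta], [\xi']$ are themselves essentially disjoint, then $\llbracket \max\{\eta, \xi'\}\rrbracket = [0]$ and the equation above collapses to $[\xi'] = [v]$, and likewise $[\eta] = [u]$.

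The main bookkeeping obstacle is distinguishing pointwise inequalities between quasi-psh representatives from inequalities of singularity types, particularly when formally manipulating differences like $[\vp] - [\xi]$; each such subtraction needs to be justified by verifying first that the subtrahend is less singular and plurisupported, so that Lemma \ref{subtraction} applies and the difference makes sense as a quasi-psh singularity type. Once that is set up carefully, the proof is essentially formal, with the key computational input being the max identity above; the slightly delicate point is that the final rearrangement genuinely requires plurisupportedness of all the involved classes in order to invoke Proposition \ref{sing_add} additively on both sides.
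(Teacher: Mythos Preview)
Your construction is exactly the paper's: set $[u]=[\vp-\xi]$ and $[v]=[\psi-\xi]$ with $\xi$ a plurisupported representative of $\llbracket\max\{\vp,\psi\}\rrbracket$, and then verify the claimed properties. The paper's own proof simply records this formula and declares the rest ``clear from the definitions''; you have supplied those details, and your minimality argument via the identity $\max\{\eta,\xi'\}+\psi=\xi'+\max\{\vp,\psi\}+O(1)$ together with additivity of $\llbracket\cdot\rrbracket$ is correct and cleanly gives both minimality and uniqueness at once.
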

\begin{proof}
Similar to subsection \ref{subsect_maxmin}, $[u]$ and $[v]$ can be given explicitly. Let $\gamma\in\llbracket \max\{\vp, \psi\}\rrbracket$. Then:
\[
[u] = [\vp - \gamma]\text{ and }[v] = [\psi - \gamma].
\]
It is clear from the definitions that the properties in the lemma hold.
\end{proof}

\begin{defn}
Denote the space of plurisupported singularity types by $\mathcal{T}$ and the space of plurisupported currents by $\mathcal{T}'$. Denote the space of differences of plurisupported singularitiy types by $\delta\mathcal{T}$, and similarly for $\delta \mathcal{T}'$.

We define the following norm on $\delta\mathcal{T}$. Fix a K\"ahler metric $\omega$ on $X$. Let $[\vp - \psi]\in\delta\mathcal{T}$. Let $\theta_\vp$, $\sigma_\psi$, $\rho_\xi$, and $\gamma_\eta$ be plurisupported representatives of $[\vp], [\psi], \llbracket \max\{\vp, \psi\}\rrbracket$, and $[P_{\theta + \sigma - \rho}(\min\{\vp, \psi\})]$ respectively. Then
\begin{equation}\label{norman}
\norm{\vp - \psi}_\omega = \norm{\vp - \psi} := \int_X (\gamma_\eta - \rho_\xi)\wedge\omega^{n-1} = \int_X (\theta_\vp + \sigma_\psi - 2\rho_\xi)\wedge\omega^{n-1}.
\end{equation}
We will also denote the corresponding norm on $\delta\mathcal{T}'$ by $\norm{\cdot}_\omega$, and note the two spaces are obviously isomorphic as normed vector spaces.

If $T - S\in \delta\mathcal{T}'$, it is easy to see that $\norm{T - S}_\omega$ is just the mass of the total variation of the measure $(T - S)\wedge\omega^{n-1}$.
\end{defn}

If $[\vp]$ and $[\psi]$ are essentially disjoint, then \eqref{norman} simplifies to:
\[
\norm{\vp - \psi}_\omega = \int_X \gamma_\eta\wedge\omega^{n-1} = \int_X (\theta_\vp  + \sigma_\psi)\wedge\omega^{n-1},
\]
and if $[\vp] \geq [\psi]$ or $[\vp] \leq [\psi]$, then:
\[
\norm{\vp - \psi}_\omega = \abs{ \int_X (\theta_\vp - \sigma_\psi)\wedge\omega^{n-1}}.
\]

It is easy to see using Lemma \ref{convenient} and Proposition \ref{ordering} that $\norm{\vp - \psi}$ is an actual norm.

Changing $\omega$ obviously produces an equivalent norm, so that the topology on $\delta\mathcal{T}$ is independent of $\omega$. It is easy to see that it is at least as strong as the usual strong topology on the space of currents:

\begin{prop}\label{strong}
Suppose that $[\vp_i - \psi_i], [\vp - \psi] \in \delta\mathcal{T}$ are such that each of the pairs $[\vp_i], [\psi_i]$ and $[\vp], [\psi]$ are essentially disjoint, and 
\[
\norm{(\vp_i - \psi_i) - (\vp - \psi)}_\omega \rightarrow 0.
\]
Let $T_i, S_i, T, S$ be plurisupported representatives of $[\vp_i], [\psi_i], [\vp]$, and $[\psi]$, respectively. Then $T_i - S_i$ strongly converges to $T - S$.
\end{prop}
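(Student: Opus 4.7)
The key observation is that the norm $\norm{\cdot}_\omega$ on $\delta\mathcal{T}'$ is, by the remark immediately following its definition, the total variation mass of the associated signed measure on $X$. So the hypothesis will automatically translate into strong convergence once the signed current $(T_i - S_i) - (T - S)$ is expressed as a difference of essentially disjoint positive plurisupported currents and the norm formula in that setting is applied.

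The plan is as follows. First I would observe that $[\vp_i + \psi]$ and $[\vp + \psi_i]$ are both plurisupported singularity types by Proposition \ref{sing_add}, and apply Lemma \ref{convenient} to this pair to produce essentially disjoint plurisupported singularity types $[u_i]$ and $[v_i]$ with
\[
[u_i - v_i] = [(\vp_i + \psi) - (\vp + \psi_i)] = [(\vp_i - \psi_i) - (\vp - \psi)].
\]
Explicitly, if $\gamma_i$ represents $\llbracket\max\{\vp_i + \psi, \vp + \psi_i\}\rrbracket$, then $u_i = \vp_i + \psi - \gamma_i$ and $v_i = \vp + \psi_i - \gamma_i$. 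Next, I would fix compatible smooth forms so that plurisupported representatives $A_i$ of $[u_i]$, $B_i$ of $[v_i]$, and $R_i$ of $[\gamma_i]$ satisfy $A_i + R_i = T_i + S$ and $B_i + R_i = T + S_i$ as closed positive currents; the positivity of $A_i$ and $B_i$ is guaranteed by combining Proposition \ref{sing_ordering} with Corollary \ref{ordering}. The identity $A_i - B_i = (T_i - S_i) - (T - S)$ is then immediate.

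With the decomposition in hand, the essentially disjoint version of \eqref{norman} gives
\[
\norm{(\vp_i - \psi_i) - (\vp - \psi)}_\omega = \int_X (A_i + B_i)\wedge\omega^{n-1},
\]
so the hypothesis forces both $\int_X A_i\wedge\omega^{n-1} \to 0$ and $\int_X B_i\wedge\omega^{n-1} \to 0$. Since for positive currents vanishing mass is equivalent to strong convergence to zero, this would yield $A_i - B_i \to 0$ strongly, i.e., $T_i - S_i \to T - S$ strongly.

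The only delicate point is the bookkeeping of cohomology classes in choosing the representatives $A_i, B_i, R_i$ so that the differences of currents exactly realize the differences of singularity types produced by Lemma \ref{convenient}; after that, the argument is essentially a direct unwinding of definitions together with the tautological identification of the norm with total variation mass, and I do not expect any serious mathematical obstacle.
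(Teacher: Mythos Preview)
Your proposal is correct. The core idea is identical to the paper's: decompose the signed current $(T_i - S_i) - (T - S)$ as a difference of two positive plurisupported currents whose total mass is controlled by the norm, and use that positive currents of vanishing mass go to zero strongly.

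The execution differs slightly. You apply Lemma \ref{convenient} directly to the pair $[\vp_i + \psi], [\vp + \psi_i]$, producing the essentially disjoint pair $A_i, B_i$ in one step, and then read off the norm formula for essentially disjoint pairs. The paper instead first proves the intermediate identity
\[
\norm{(\vp_i - \psi_i) - (\vp - \psi)} = \norm{\vp_i - \vp} + \norm{\psi_i - \psi},
\]
via the equality $\llbracket \max\{\vp_i + \psi, \vp + \psi_i\}\rrbracket = \llbracket \max\{\vp_i, \vp\}\rrbracket + \llbracket \max\{\psi_i, \psi\}\rrbracket$ (which uses essential disjointness of both pairs), and then treats $T_i - T$ and $S_i - S$ separately, each with its own min-current $\ov{T}_i$ or $\ov{S}_i$. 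Your route is a little more direct for this proposition; the paper's route buys the splitting identity itself, which it reuses later in the proof of Theorem \ref{Banach} to show that a $\norm{\cdot}$-Cauchy sequence $[\vp_i - \psi_i]$ forces the positive and negative parts to be separately Cauchy.

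The ``bookkeeping'' you flag is harmless: since $[\gamma_i] \leq [\vp_i + \psi]$ and $[\gamma_i] \leq [\vp + \psi_i]$, Corollary \ref{ordering} gives $R_i \leq T_i + S$ and $R_i \leq T + S_i$, so $A_i := T_i + S - R_i$ and $B_i := T + S_i - R_i$ are automatically closed, positive, and plurisupported (being dominated by plurisupported currents), with no need to juggle cohomology classes explicitly.
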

\begin{proof}

We claim that:
\[
\norm{(\vp_i - \psi_i) - (\vp - \psi)} = \norm{\vp_i - \vp} + \norm{\psi_i - \psi},
\]
so that the positive and negative parts can be handled separately.

Assume the claim for now, and let $0 \leq f \leq 1$ be measurable and $\alpha$ be a smooth, positive $(n-1, n-1)$-form, and choose $\omega$ such that $\alpha \leq \omega^{n-1}$. Let $\ov{T}_i$ be a plurisupported representative of $\llbracket\max\{T_i, T\}\rrbracket$. Then:
\[
\abs{\int_X f(T_i - T)\wedge\alpha} \leq \abs{\int_X f (T_i - \ov{T}_i)\wedge\alpha} + \abs{\int_X f (T - \ov{T}_i)\wedge\alpha} \leq \norm{T_i - T}.
\]
showing strong convergence.

We now show the claim. It follows directly from the equality:
\[
\llbracket \max\{\vp_i + \psi, \vp + \psi_i\}\rrbracket = \llbracket \max\{ \vp_i, \vp\}\rrbracket + \llbracket \max\{\psi_i, \psi\}\rrbracket.
\]
To see this, note first that the $\geq$ inequality is clear from the definitions. The $\leq$ inequality follows from applying $\llbracket \cdot\rrbracket$ to the pointwise inequality:
\begin{align*}
\max\{\vp_i + \psi, \vp + \psi_i\} &\geq \max\{\vp_i + \psi, \vp\} + \max\{\vp_i + \psi, \psi_i\}\\
& \geq \max\{\vp_i, \vp\} + \max\{\psi, \vp\} + \max\{\vp_i, \psi_i\} + \max\{\psi, \psi_i\},
\end{align*}
and using the essential disjointedness of the pairs $[\vp_i], [\psi_i]$ and $[\vp], [\psi]$.
\end{proof}

\begin{rmk}\label{MeStupid}
As discussed immediately after Conjecture \ref{conjecture}, it seems optimistically true that strong convergence is actually equivalent to $\norm{\cdot}$-convergence; if the Hausdorff dimension of the intersections of the supports of $T_i - \ov{T_i}$ and $T - \ov{T}_i$ is $< 2n -2$, then one could separate these components using indicator functions.
\end{rmk}

Compare Proposition \ref{strong} with what is known about weak convergence of currents of integration along divisors, e.g. \cite[Chpt. 13]{Dem6}.\\

In order to show that $\delta\mathcal{T}$ is actually a Banach space, we need to verify that the metric induced by $\norm{\cdot}$ is complete. We will do this by comparing $\norm{\cdot}$ with the (pseduo-)metric of Darvas-Di Nezza-Lu \cite{DDL4} on the space of singularity types; in particular, we will show that the norm dominates the $d_\mathcal{S}$-distance, so that we can use \cite[Theorem 4.9]{DDL4}.

\begin{prop}\label{equivalent}
Suppose that $\omega$ is a sufficiently large K\"ahler form, and that $[\vp], [\psi]$ are plurisupported singularity types such that $\vp, \psi \in \PSH(X,\omega)$. Then there exists a constant $C(n)$, depending only on $n$, such that:
\begin{equation}\label{equivalent_1}
d_\mathcal{S}(\vp, \psi) \leq C(n) \norm{\vp - \psi}_\omega.
\end{equation}
In the reverse direction, if either $[\vp] \leq [\psi]$ or $[\psi] \leq [\vp]$, then:
\begin{equation}\label{equivalent_2}
d_\mathcal{S}(\vp, \psi) \geq \frac{1}{n+1}\norm{\vp - \psi}_\omega.
\end{equation}
\end{prop}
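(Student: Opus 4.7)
The plan is to express both $d_\mathcal{S}(\vp, \psi)$ and $\|\vp - \psi\|_\omega$ as cohomological intersection numbers using the pluripolar Siu decomposition, then compare via elementary inequalities on the pseudoeffective cone. The central observation is that, for a plurisupported singularity type $[\vp]$ with $\vp \in \PSH(X, \omega)$ and plurisupported representative $\theta_{\vp'}$ (in class $[\theta]$), the difference $\vp - \vp'$ is bounded (same singularity type) and lies in $\PSH(X, \omega - \theta)$ by Lemma \ref{subtraction}. Hence the ``regular part'' $R_\vp := \omega_\vp - \theta_{\vp'}$ is a closed positive current in $[\omega - \theta]$ with a bounded potential. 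Since any wedge product involving the pluripolar-supported $\theta_{\vp'}$ vanishes in the non-pluripolar product, Bedford--Taylor theory yields the exact identity
\[
\int_X \langle \omega_\vp^{n-k} \wedge \omega^k \rangle = ([\omega] - [\theta])^{n-k} \cdot [\omega]^k, \qquad 0 \leq k \leq n.
\]

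By Proposition \ref{maximum}, the envelope $P_\omega(\min\{\vp, \psi\})$ has plurisupported singularity type, with associated plurisupported representative in class $[\theta + \sigma - \rho]$, where $\theta, \sigma, \rho$ are the forms of the plurisupported representatives $\theta_\vp, \sigma_\psi, \rho_\xi$ of $[\vp]$, $[\psi]$, $\llbracket \max\{\vp, \psi\} \rrbracket$; the ``sufficiently large $\omega$'' hypothesis guarantees $[\omega] \geq [\theta + \sigma - \rho]$. Substituting the above identity into the defining expression for $d_\mathcal{S}$ from \cite{DDL4} (a $\frac{1}{n+1}$-normalized sum over $k = 0, \dots, n-1$ of mixed non-pluripolar mass gaps involving $\vp$, $\psi$, and $P_\omega(\min\{\vp, \psi\})$) and expanding binomially with $a := [\theta] - [\rho]$, $b := [\sigma] - [\rho]$ (both pseudoeffective by Proposition \ref{sing_ordering}) and $A := [\omega] - [\theta] - [\sigma] + [\rho]$, one obtains the cohomological formula
\[
d_\mathcal{S}(\vp, \psi) = \frac{1}{n+1}\sum_{k=0}^{n-1}\sum_{j=1}^{n-k}\binom{n-k}{j} A^{n-k-j} \cdot [\omega]^k \cdot (a^j + b^j),
\]
while directly $\|\vp - \psi\|_\omega = (a+b) \cdot [\omega]^{n-1}$.

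For the upper bound \eqref{equivalent_1}, monotonicity of pseudoeffective intersection products (since $a, b, A \leq [\omega]$) gives $A^{n-k-j} \cdot [\omega]^k \cdot a^j \leq a \cdot [\omega]^{n-1}$ term-by-term, so collecting the binomial weights bounds the double sum above by a combinatorial constant $C(n)$ (growing like $2^n$) times $\|\vp - \psi\|_\omega$. For the lower bound \eqref{equivalent_2}, assume $[\psi] \leq [\vp]$: then $[\rho] = [\sigma]$ and $b = 0$, and the summand at $k = n-1$, $j = 1$ collapses to $\binom{1}{1} A^0 \cdot [\omega]^{n-1} \cdot a = [\omega]^{n-1} \cdot a = \|\vp - \psi\|_\omega$ exactly. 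Since all remaining summands are nonnegative, the total is at least $\|\vp - \psi\|_\omega$, and dividing by the normalization $(n+1)$ gives the claim. The principal technical challenge will be identifying the model envelope $P[\min\{\vp, \psi\}]$ from \cite{DDL4}'s definition of $d_\mathcal{S}$ with the envelope $P_\omega(\min\{\vp, \psi\})$ computed in Proposition \ref{maximum}; in the plurisupported setting, the machinery of Theorem \ref{transfer} should ensure that these coincide at the level of singularity types, validating the cohomological substitution that drives the whole argument.
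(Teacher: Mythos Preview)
Your overall strategy is sound and parallels the paper's, but your term-by-term bound $A^{n-k-j}\cdot[\omega]^k\cdot a^j \leq a\cdot[\omega]^{n-1}$ has a gap. You appeal to ``monotonicity of pseudoeffective intersection products,'' but this fails for $j\geq 2$: the class $a=[\theta]-[\rho]$ is pseudoeffective but typically not nef (take $a=c_1(E)$ for an exceptional curve on a surface, where $a^2<0$), so $a^2\cdot[\omega]^{n-2}$ need not be dominated by $a\cdot[\omega]^{n-1}$, nor even nonnegative. The fix is easy: telescope instead as $(A+a)^{n-k}-A^{n-k}=a\cdot\sum_{i=0}^{n-k-1}(A+a)^iA^{n-k-1-i}$. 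For $\omega$ sufficiently large, both $A=[\omega-\theta-\sigma+\rho]$ and $A+a=[\omega-\sigma]$ are K\"ahler, so the sum is a nef $(n-1,n-1)$-class dominated by $(n-k)[\omega]^{n-1}$; pairing with the pseudoeffective class $a$ then gives the bound legitimately. The paper handles the analogous issue with its Lemma~\ref{kkforms}.

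Apart from this, your route genuinely differs from the paper's. The paper works on the ``max'' side: it invokes the \emph{inequality} $d_\mathcal{S}(\vp,\psi)\leq\sum_j(2\int\langle\omega_{\max\{\vp,\psi\}}^j\wedge\omega^{n-j}\rangle-\cdots)$ from \cite[Prop.~3.5]{DDL4}, then uses monotonicity of non-pluripolar masses \cite{WN2} to replace $\max\{\vp,\psi\}$ (which is \emph{not} plurisupported) by a representative of $\llbracket\max\{\vp,\psi\}\rrbracket$ (which is), and finally bounds the cohomological terms via Lemma~\ref{kkforms}. You instead work on the ``min'' side, feeding the exact definition of $d_\mathcal{S}$ together with Proposition~\ref{maximum} to obtain an \emph{equality} for $d_\mathcal{S}$ as a cohomological expression---arguably cleaner, and yielding an exact formula the paper does not state. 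The trade-off is that the paper's route sidesteps your ``principal technical challenge'' entirely, whereas you must verify which envelope enters the definition of $d_\mathcal{S}$; this is manageable (only non-pluripolar masses appear, and these depend only on singularity type, which Proposition~\ref{maximum} computes), but deserves a line of justification. The lower-bound arguments \eqref{equivalent_2} are essentially identical in both approaches.
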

\begin{proof}
By \cite[Prop. 3.5]{DDL4}, we have:
\[
d_\mathcal{S}(\vp, \psi) \leq \sum_{j=1}^n 2\int_X\langle\omega_{\max\{\vp, \psi\}}^{j}\wedge\omega^{n-j}\rangle - \int_X \langle \omega^j_\vp\wedge\omega^{n-j}\rangle - \int_X \langle\omega_\psi^j\wedge\omega^{n-j} \rangle.
\]
Increase $\omega$ so that we can find some $\xi\in\llbracket \max\{\vp, \psi\}\rrbracket\cap\PSH(X,\omega)$. Since $\llbracket\max\{\vp, \psi\}\rrbracket\leq [\max\{\vp, \psi\}]$, by \cite{WN2} we have:
\[
d_\mathcal{S}(\vp, \psi) \leq \sum_{j=1}^n 2\int_X\langle\omega_\xi^{j}\wedge\omega^{n-j}\rangle - \int_X \langle \omega_\vp^j\wedge\omega^{n-j}\rangle - \int_X \langle\omega_\psi^j\wedge\omega^{n-j} \rangle.
\]
Let $T, S,$ and $R$ be plurisupported representatives of $[\vp],[\psi],$ and $[\xi]$, respectively, and let $\theta_T, \theta_S$, and $\theta_R$ be smooth representatives of $[T], [S]$, and $[R]$, respectively. Then for any $j$ we have:
\[
\int_X \langle \omega_\vp^j\wedge\omega^{n-j}\rangle = \int_X (\omega - \theta_T)^j\wedge\omega^{n-j} = \int_X \omega^n + \sum_{k=1}^{j} {j\choose k} (-1)^k \int_X\theta_T^k\wedge\omega^{n-k}
\]
as $T$ is plurisupported. Combining this with the similar expressions for the $\psi$ and $\xi$ terms gives:
\begin{align*}
2\int_X\langle\omega_\xi^{j}\wedge\omega^{n-j}\rangle& - \int_X \langle \omega_\vp^j\wedge\omega^{n-j}\rangle - \int_X \langle\omega_\psi^j\wedge\omega^{n-j} \rangle\\
= &\sum_{k=1}^{j} {j\choose k} (-1)^{k+1} \left(\int_X\theta_T^k\wedge\omega^{n-k} + \int_X\theta_S^k\wedge\omega^{n-k} - 2\int_X \theta_R^k\wedge\omega^{n-k}\right).
\end{align*}
 Up to increasing $\omega$ again, we can assume that the classes $[\omega \pm \theta_T], [\omega \pm \theta_S], [\omega \pm \theta_R]$ are K\"ahler. It follows from Lemma \ref{kkforms} that:
\[
\abs{\int_X (\theta_T^k - \theta_R^k)\wedge\omega^{n-k}} = \abs{\int_X (T - R)\wedge\left(\sum_{i = 0}^{k-1} \theta_T^i\wedge\theta_R^{k-i - 1}\right)\wedge\omega^{n-k}} \leq k\cdot 3^{k-1} \int_X (T-R)\wedge\omega^{n-1},
\]
as $(T-R)\wedge\omega^{n-k}$ is a (strongly) positive current. The same holds for the $S-R$ terms, so we get:
\begin{align*}
d_S(\vp, \psi) \leq \sum_{j=1}^n 2\int_X\langle\omega_\xi^{j}\wedge\omega^{n-j}\rangle - \int_X \langle \omega_\vp^j\wedge\omega^{n-j}\rangle - \int_X \langle\omega_\psi^j\wedge\omega^{n-j} \rangle \leq C(n)\norm{T - S}_\omega,
\end{align*}
proving \eqref{equivalent_1}.

Seeing \eqref{equivalent_2} is trivial. Suppose without loss of generality that $\vp \geq \psi$. Then by \cite[Lemma 3.4]{DDL4}, we have:
\begin{align*}
d_\mathcal{S}(\vp, \psi) &= \frac{1}{n+1} \sum_{j=1}^n \int_X \langle \omega_\vp^j\wedge\omega^{n-j}\rangle - \int_X \langle \omega_\psi^j\wedge\omega^{n-j}\rangle\\
& \geq \frac{1}{n+1}\left(\int_X \langle\omega_\vp\wedge\omega^{n-1}\rangle - \int_X \langle\omega_\psi\wedge\omega^{n-1}\rangle\right)\\
& = \frac{1}{n+1}\norm{\psi - \vp}_\omega.
\end{align*}
\end{proof}

\begin{lem}\label{kkforms}
Suppose that $\theta_1, \ldots, \theta_n$ are closed, smooth real $(1,1)$-forms on $X$, and that $\omega$ is a K\"ahler form such that:
\[
\left[\omega \pm \theta_i\right]
\]
is a  K\"ahler class, for each $i$. Then for any $0\leq k \leq n$ and any closed, positive $(n-k, n-k)$-current $T_k$, we have:
\[
\abs{\int_X \theta_K \wedge T_k} \leq 3^k \int_X \omega^k \wedge T_k,
\]
where here $\theta_K := \theta_1\wedge\ldots\wedge\theta_k$. 
\end{lem}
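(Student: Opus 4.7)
The strategy is purely cohomological: because $T_k$ is closed, the integral $\int_X \theta_K \wedge T_k$ depends only on the cohomology classes $[\theta_1],\ldots,[\theta_k]$. Replacing any $\theta_i$ with a cohomologous smooth form $\theta_i + i\p\pbar g_i$ alters the integral by $\int_X i\p\pbar g_i \wedge (\text{closed current})$, which vanishes by integration by parts. So I would begin by producing well-chosen cohomologous representatives for each $\theta_i$.

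Using that $[\omega \pm \theta_i]$ is K\"ahler, I would pick smooth functions $\psi_i^\pm$ making $\beta_i^\pm := \omega \pm \theta_i + i\p\pbar \psi_i^\pm$ K\"ahler forms, and set $\alpha_i^\pm := \tfrac12 \beta_i^\pm$. A direct computation yields
\[
\alpha_i^+ - \alpha_i^- = \theta_i + \tfrac12 i\p\pbar(\psi_i^+ - \psi_i^-), \qquad \alpha_i^+ + \alpha_i^- = \omega + \tfrac12 i\p\pbar(\psi_i^+ + \psi_i^-),
\]
so $\alpha_i^+ - \alpha_i^-$ is cohomologous to $\theta_i$ while $\alpha_i^+ + \alpha_i^-$ is cohomologous to $\omega$, with each $\alpha_i^\pm$ itself K\"ahler.

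The remainder is formal. Substituting each $\theta_i$ by $\alpha_i^+ - \alpha_i^-$ one at a time (at every stage the remaining wedge against $T_k$ is a closed current, so the substitution preserves the integral), expanding the resulting product, and taking absolute values,
\[
\abs{\int_X \theta_K \wedge T_k} \le \sum_{\epsilon \in \{+,-\}^k} \int_X \bigwedge_{i=1}^k \alpha_i^{\epsilon_i} \wedge T_k = \int_X \prod_{i=1}^k (\alpha_i^+ + \alpha_i^-) \wedge T_k = \int_X \omega^k \wedge T_k,
\]
where the triangle inequality drops the signs, the first equality uses that each $\bigwedge_{i=1}^k \alpha_i^{\epsilon_i} \wedge T_k$ is a positive measure (K\"ahler forms wedged against a closed positive current), and the last step is one more cohomological substitution $\alpha_i^+ + \alpha_i^- \rightsquigarrow \omega$.

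In fact this yields the sharper inequality $\abs{\int_X \theta_K \wedge T_k} \le \int_X \omega^k \wedge T_k$; the constant $3^k$ in the statement appears to be comfortable slack reserved for the application in Proposition \ref{equivalent}. I don't foresee any genuine obstacle: the only delicate point is the step-by-step cohomological substitution, which reduces to $dT_k = 0$ together with the fact that wedging closed smooth forms against a closed current again produces a closed current.
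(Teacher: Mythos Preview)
Your proof is correct and in fact yields the sharper constant $1$ in place of $3^k$. The paper argues by induction: it writes $\theta_{k+1} = (\omega + \theta_{k+1}) - \omega$, applies the inductive hypothesis to the closed positive currents $\tilde\omega \wedge T_{k+1}$ (with $\tilde\omega\in[\omega+\theta_{k+1}]$ K\"ahler) and $\omega\wedge T_{k+1}$, and then bounds $\int_X (\omega+\theta_{k+1})\wedge\omega^k\wedge T_{k+1} \leq 2\int_X \omega^{k+1}\wedge T_{k+1}$ using the base case. This asymmetric splitting is what produces the factor of $3$ per step. Your symmetric decomposition $\theta_i \sim \alpha_i^+ - \alpha_i^-$ with $\alpha_i^+ + \alpha_i^- \sim \omega$ avoids the loss entirely, since the $2^k$ cross terms collapse exactly into $\omega^k$ after the final cohomological substitution. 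Both arguments rest on the same two ingredients (Stokes for closed currents and positivity of wedge products of K\"ahler forms against $T_k$), but yours packages them globally rather than inductively, which is why the constant comes out optimal; the case $\theta_i = \omega$ shows $1$ is sharp.
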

\begin{proof}
We proceed by induction  on $k$. The case $k = 1$ is clear by Stokes' theorem:
\[
\int_X (\omega \pm \theta_i)\wedge T_1 \geq 0.
\]
Suppose then the conclusion is true for $0 \leq k < n$; then for $k+1$, we compute:
\begin{align*}
\abs{\int_X \theta_{K+1} \wedge T_{k+1}} &\leq \abs{\int_X \theta_K\wedge (\omega + \theta_{k+1})\wedge T_{k+1}} + \abs{\int_X \theta_K \wedge \omega\wedge T_{k+1}}\\
&\leq  3^k \int_X (\omega + \theta_{k+1})\wedge \omega^k \wedge T_{k+1}+ 3^k\int_X \omega^{k+1}\wedge T_{k+1}\\
& \leq 3^{k+1}\int_X \omega^{k+1}\wedge T_{k+1},
\end{align*}
using the induction hypothesis 3 times (once with the closed positive current $\ti{\omega}\wedge T_{k+1}$, where $\ti{\omega}\in [\omega + \theta_{k+1}]$ is K\"ahler, which we can do using Stokes' Theorem).
\end{proof}

The inequality \eqref{equivalent_2} seems likely to be false in general, as $\llbracket\max\{\vp, \psi\}\rrbracket < [\max\{\vp, \psi\}]$. Thus, it is it unlikely that $d_\mathcal{S}$ and $\norm{\cdot}$ are truly equivalent; however, it is easy to see that they are close enough that they induce the same topologies.

Before proving this, we need a brief proposition to address the fact that $d_\mathcal{S}$ is really only a quasi-metric on the space of singularity types.

\begin{prop}\label{ignore}
Suppose that $\vp \in\PSH(X,\theta)$ and that $\vp \leq 0$. Define the singularity type envelope:
\[
P_\theta[\vp] := \sup\{v\in \PSH(X,\theta)\ |\ v\leq 0, v\leq \vp + \mathcal{O}(1)\}^*,
\]
and the model type envelople
\[
\mathcal{C}_\theta(\vp) := \lim_{\e\rightarrow 0^+} P_\theta[(1-\e)\vp + \e V_\theta].
\]
Then $\llbracket\vp\rrbracket = \llbracket P_\theta[\vp]\rrbracket = \llbracket \mathcal{C}_\theta(\vp)\rrbracket$.
\end{prop}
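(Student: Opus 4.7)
The plan is to establish the two equalities separately: first $\llbracket P_\theta[\vp]\rrbracket = \llbracket\vp\rrbracket$ via an explicit structural formula for $P_\theta[\vp]$ obtained from Lemma \ref{hoponpop}, then $\llbracket\mathcal{C}_\theta(\vp)\rrbracket = \llbracket\vp\rrbracket$ by applying the first equality to the approximating family and passing to the limit.

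For $\llbracket P_\theta[\vp]\rrbracket = \llbracket\vp\rrbracket$: since $\vp \leq 0$ is itself a competitor in the defining sup, we have $P_\theta[\vp] \geq \vp$, so $[P_\theta[\vp]] \leq [\vp]$, giving $\llbracket P_\theta[\vp]\rrbracket \leq \llbracket\vp\rrbracket$ by Proposition \ref{sing_ordering}. For the reverse, I would use the pluripolar Siu decomposition to write $\vp = \psi + u + a$, where $\psi \in \PSH(X,\rho)$ is a plurisupported representative of $\llbracket\vp\rrbracket$, $u \in \PSH(X, \theta-\rho)$ represents $\langle\vp\rangle$, $a \in \R$, and $\psi \leq 0$. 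Using $P_\theta[\vp] = \lim_{M \to \infty} P_\theta(\min(\vp + M, 0))$ together with the identity $\min(\vp + M, 0) = \psi + \min(-\psi, u + M + a)$ and Lemma \ref{hoponpop}, we obtain
\[
P_\theta(\min(\vp + M, 0)) = \psi + P_{\theta-\rho}(\min(-\psi, u + M + a)).
\]
Letting $M \to \infty$ and taking the usc regularization produces a $(\theta-\rho)$-psh function $w$ with $P_\theta[\vp] = \psi + w$. Hence $\theta_{P_\theta[\vp]} = \rho_\psi + (\theta-\rho)_w \geq \rho_\psi$ as currents, and since $\rho_\psi$ is plurisupported, Proposition \ref{char_singular_part}(1) gives $\llbracket P_\theta[\vp]\rrbracket \geq [\psi] = \llbracket\vp\rrbracket$.

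For $\llbracket\mathcal{C}_\theta(\vp)\rrbracket = \llbracket\vp\rrbracket$: the $\leq$ direction follows from $\mathcal{C}_\theta(\vp) \geq \vp$ (obtained by taking $\e \to 0^+$ in $P_\theta[(1-\e)\vp + \e V_\theta] \geq (1-\e)\vp + \e V_\theta$) together with Proposition \ref{sing_ordering}. For the reverse, I would apply the already-established first equality to each $(1-\e)\vp + \e V_\theta$; combined with additivity (Proposition \ref{sing_add}), non-negative scaling of the Siu decomposition (a direct consequence of Proposition \ref{char_singular_part}(2)), and Lemma \ref{mnef}'s identification $\llbracket V_\theta\rrbracket = N(\theta)$, this yields
\[
\llbracket P_\theta[(1-\e)\vp + \e V_\theta]\rrbracket = (1-\e)\llbracket\vp\rrbracket + \e N(\theta).
\]
Since $\mathcal{C}_\theta(\vp) \leq P_\theta[(1-\e)\vp + \e V_\theta]$ for each $\e > 0$, Proposition \ref{sing_ordering} gives the current inequality $\llbracket\mathcal{C}_\theta(\vp)\rrbracket \geq (1-\e)\llbracket\vp\rrbracket + \e N(\theta)$, and letting $\e \to 0^+$ via strong convergence of closed positive currents yields $\llbracket\mathcal{C}_\theta(\vp)\rrbracket \geq \llbracket\vp\rrbracket$.

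The main obstacle is justifying the application of Lemma \ref{hoponpop}: one must exhibit $\min(-\psi, u + M + a)$ explicitly as a difference of two quasi-psh functions bounded above (using identities such as $\min(\alpha,\beta) = \alpha + \min(0, \beta - \alpha)$ and $\min(0, h) = h - \max(h, 0)$), and then verify that the increasing limit of the resulting $(\theta-\rho)$-psh envelopes, after usc regularization, is itself $(\theta-\rho)$-psh and supplies the correct potential for $\theta_{P_\theta[\vp]} - \rho_\psi$. These verifications follow from Choquet's lemma and continuity of $i\p\pbar$ under monotone limits of psh functions bounded above.
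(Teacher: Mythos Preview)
Your proposal is correct and follows essentially the same route as the paper: both peel off the plurisupported potential $\psi$ from $P_\theta[\vp]$ via Lemma~\ref{hoponpop} (the paper cites the more general Theorem~\ref{transfer}, but uses it in exactly the same way) to exhibit the remainder as a $(\theta-\rho)$-psh function with trivial singular part, and both handle $\mathcal{C}_\theta(\vp)$ by applying the first equality to the approximants $(1-\e)\vp+\e V_\theta$ and letting $\e\to 0$---the paper does this at the level of pointwise inequalities (bounding $\mathcal{C}_\theta(\vp)\le (1-\delta)\psi+C$ directly), while you do it at the level of currents, but the content is the same. One small point: your appeal to Lemma~\ref{mnef} requires $[\theta]$ to be big, which is not assumed in the proposition; however, the identification $\llbracket V_\theta\rrbracket=N(\theta)$ is not actually needed for your argument, since all you use is that $\e\llbracket V_\theta\rrbracket\to 0$ as $\e\to 0^+$, and this holds regardless.
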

\begin{proof}
Note that, by Theorem \ref{transfer}:
\[
P_\theta[\vp] = \left(\lim_{C\rightarrow\infty} P_\theta(\min\{0, \vp + C\})\right)^* = \psi  + \left(\lim_{C\rightarrow\infty} P_{\theta - \sigma}(\min\{-\psi, \vp - \psi + C\})\right)^*,
\]
where $\sigma_{\psi}$ is a plurisupported representative of $\llbracket \vp \rrbracket$. It is easy to see that there is some $C_0$ such that:
\[
\vp - \psi - C_0 \leq \left(\lim_{C\rightarrow\infty} P_{\theta - \sigma}(\min\{-\psi, \vp - \psi + C\})\right)^* =: \eta,
\]
so that $\llbracket \eta \rrbracket = 0$ and hence $\llbracket P_\theta[\vp]\rrbracket = [\psi] = \llbracket \vp\rrbracket$.

For $\mathcal{C}_\theta(\vp)$, we have from the first case that:
\[
\mathcal{C}_\theta(\vp) \leq (1-\delta)\psi + P_{\theta}(-\psi) \leq (1-\delta)\psi + C,
\]
for any $0 < \delta < 1$. Letting $\delta \rightarrow 0$ thus finishes, as it is obvious that $P_\theta[\vp] \leq \mathcal{C}_\theta(\vp)$.

\end{proof}

As a remark, if $\int_X \langle\theta_\vp^n\rangle > 0$, then $P_\theta[\vp] = \mathcal{C}_\theta[\vp]$, by \cite[Prop. 2.6]{DDL4} -- conjecturally, equality should always hold \cite[Conj. 2.5]{DDL4}.

\begin{thm}\label{converge}
Suppose that $\omega$ is a K\"ahler form, and let $\mathcal{S}_\delta$, $\delta > 0$, be as in \cite{DDL4}. Suppose that $[\vp_i], [\vp]\in \mathcal{S}_\delta$ are such that $[\vp_i]$ $d_S$-converges to $[\vp]$. Then the sequence $\llbracket \vp_i\rrbracket$ $\norm{\cdot}_\omega$-converges to $\llbracket \vp\rrbracket$.
\end{thm}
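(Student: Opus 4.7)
The plan is to establish directly the bound
\[
\|\llbracket\vp_i\rrbracket - \llbracket\vp\rrbracket\|_\omega \leq C(n) \bigl[d_\mathcal{S}(\vp_i, \max\{\vp_i,\vp\}) + d_\mathcal{S}(\vp, \max\{\vp_i,\vp\})\bigr],
\]
so that the result will follow from the fact that on $\mathcal{S}_\delta$, $d_\mathcal{S}$-convergence of $[\vp_i]$ to $[\vp]$ forces convergence of each of the two terms on the right to zero (indeed, $d_\mathcal{S}$ on non-comparable singularity types is defined precisely via these ``max-legs''). Up to replacing $\omega$ by a larger K\"ahler form (which only rescales $\|\cdot\|_\omega$ and does not affect convergence), we may assume throughout that all singularity types under consideration admit $\omega$-psh representatives.

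The first step is to identify the common ``minimum.'' Let $\xi_i$ be a plurisupported representative of $\llbracket \max\{\vp_i,\vp\}\rrbracket$. By Proposition \ref{char_singular_part}, any plurisupported singularity type dominated by both $\llbracket\vp_i\rrbracket$ and $\llbracket\vp\rrbracket$ is also dominated by $[\vp_i]$ and $[\vp]$ and hence by $\llbracket\max\{\vp_i,\vp\}\rrbracket$; conversely $[\xi_i]$ is plurisupported and dominated by both singular parts. Thus $[\xi_i] = \llbracket\max\{\llbracket\vp_i\rrbracket, \llbracket\vp\rrbracket\}\rrbracket$, which is exactly the common-minimum singularity type entering the definition of $\|\llbracket\vp_i\rrbracket - \llbracket\vp\rrbracket\|_\omega$.

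The second step is the key computation. Choose $\omega$-psh representatives and consider the pluripolar Siu decompositions $\omega_{\vp_i} = \langle\omega_{\vp_i}\rangle + \llbracket\omega_{\vp_i}\rrbracket$, and analogously for $\vp$ and $\max\{\vp_i,\vp\}$. Since $\omega_{\vp_i}$ lies in $[\omega]$ and $\omega^{n-1}$ is smooth and closed, Stokes' theorem yields
\[
\int_X \llbracket\omega_{\vp_i}\rrbracket \wedge \omega^{n-1} = \int_X \omega^n - \int_X \langle \omega_{\vp_i}\wedge \omega^{n-1}\rangle,
\]
and similarly for the other two currents. Using these $\omega$-psh representatives in the norm formula \eqref{norman}, the $\int_X \omega^n$ terms cancel with coefficients $1 + 1 - 2 = 0$, leaving
\[
\|\llbracket\vp_i\rrbracket - \llbracket\vp\rrbracket\|_\omega = 2\int_X \langle\omega_{\max\{\vp_i,\vp\}}\wedge\omega^{n-1}\rangle - \int_X\langle\omega_{\vp_i}\wedge\omega^{n-1}\rangle - \int_X\langle\omega_\vp\wedge\omega^{n-1}\rangle.
\]

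The third step compares this expression with $d_\mathcal{S}$. Since $\vp_i \leq \max\{\vp_i,\vp\}$, \cite[Lemma 3.4]{DDL4} gives
\[
(n+1)\, d_\mathcal{S}(\vp_i, \max\{\vp_i,\vp\}) = \sum_{j=1}^n \Bigl[\int_X\langle\omega_{\max\{\vp_i,\vp\}}^j\wedge\omega^{n-j}\rangle - \int_X\langle\omega_{\vp_i}^j\wedge\omega^{n-j}\rangle\Bigr],
\]
with analogous formula for $(\vp, \max\{\vp_i,\vp\})$. Each summand in both expressions is non-negative by monotonicity of the non-pluripolar product under the singularity-type ordering (cf.\ \cite{WN2}). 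Throwing away all but the $j=1$ terms and summing the two identities exactly recovers the right-hand side of the displayed expression for $\|\llbracket\vp_i\rrbracket - \llbracket\vp\rrbracket\|_\omega$, giving the desired inequality, which lets us conclude.

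The main obstacle is the Stokes-theoretic bookkeeping in the second step: the three plurisupported currents $\llbracket\omega_{\vp_i}\rrbracket$, $\llbracket\omega_\vp\rrbracket$, $\llbracket\omega_{\max\{\vp_i,\vp\}}\rrbracket$ lie in different cohomology classes, so one must verify that the linear combination appearing in the norm really does simplify as claimed regardless of the implicit choices of smooth representatives. Once this cohomological identity is in hand, the rest is essentially a one-line comparison with the $j=1$ term of $d_\mathcal{S}$.
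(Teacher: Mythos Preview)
Your computational steps are correct and the approach is cleaner than the paper's, but there is a genuine gap in the final implication.

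Steps 1--3 are fine. The identification $\llbracket\max\{\vp_i,\vp\}\rrbracket = \llbracket\max\{\llbracket\vp_i\rrbracket,\llbracket\vp\rrbracket\}\rrbracket$ is correct (both directions follow from Proposition~\ref{char_singular_part}), and once you work with $\omega$-psh representatives of $\vp_i$, $\vp$, and $\max\{\vp_i,\vp\}$, all three currents $\omega_{\vp_i}$, $\omega_\vp$, $\omega_{\max\{\vp_i,\vp\}}$ lie in the same class $[\omega]$, so the Stokes bookkeeping is trivial and the displayed identity for $\|\llbracket\vp_i\rrbracket - \llbracket\vp\rrbracket\|_\omega$ holds. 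The comparison with the $j=1$ term of the $d_\mathcal{S}$-formula for ordered pairs via \cite[Lemma~3.4]{DDL4} and \cite{WN2} is also correct.

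The gap is your parenthetical justification: $d_\mathcal{S}$ in \cite{DDL4} is \emph{not} defined via the ``max-legs'' $d_\mathcal{S}(\vp_i,\max\{\vp_i,\vp\}) + d_\mathcal{S}(\vp,\max\{\vp_i,\vp\})$. What \cite[Prop.~3.5]{DDL4} gives is only the inequality
\[
d_\mathcal{S}(\vp_i,\vp)\ \leq\ (n+1)\bigl[d_\mathcal{S}(\vp_i,\max\{\vp_i,\vp\}) + d_\mathcal{S}(\vp,\max\{\vp_i,\vp\})\bigr],
\]
which is the wrong direction for you. (Indeed, the paper remarks after Proposition~\ref{equivalent} that the reverse bound $\|\cdot\|_\omega \leq C\, d_\mathcal{S}$ is likely \emph{false} for incomparable plurisupported types, so one should not expect a one-line inequality here.) To conclude that the max-legs tend to zero you still need to show $d_\mathcal{S}(\max\{\vp_i,\vp\},\vp)\to 0$, and the available tool for this is precisely the sandwiching theorem \cite[Thm.~5.6]{DDL4}: along a subsequence one has $w_i\leq\vp_i\leq v_i$ with $v_i\searrow$ and $d_\mathcal{S}(v_i,\vp)\to 0$, whence $\vp\leq\max\{\vp_i,\vp\}\leq v_i$ and monotonicity of masses squeezes the $j=1$ term. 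But this is exactly the ingredient the paper's proof invokes; once you add it, your argument and the paper's have the same depth, yours being a tidier repackaging.
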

\begin{proof}
By Proposition \ref{ignore}, the singularity types $\llbracket \vp_i\rrbracket$ are independent of the choice of $\vp_i$. Choose an arbitrary subsequence of the $[\vp_i]$, which still $d_\mathcal{S}$-converges to $[\vp]$. By \cite[Thrm. 5.6]{DDL4}, there exists a further subsequence (which we will still call $[\vp_i]$) such that these $\vp_i$ can be sandwiched between two sequences $w_i \leq \vp_i \leq v_i$, $w_i, v_i\in\PSH(X,\omega)$, with the $w_i$ increasing, the $v_i$ decreasing, and both $d_\mathcal{S}$-converging to $\vp$. By \cite[Lemma 3.7]{DDL4}, we have:
\[
\abs{\int_X \langle \omega_{w_i}\wedge\omega^{n-1}\rangle - \int_X \langle\omega_\vp\wedge\omega^{n-1}\rangle}  = 
\abs{\int_X T_i\wedge\omega^{n-1} - \int_X T\wedge\omega^{n-1}} \rightarrow 0,
\]
where here $T_i$, $T$ are plurisupported representatives of $\llbracket w_i\rrbracket, \llbracket \vp\rrbracket$, respectively. It follows from Corollary \ref{ordering} that $T \leq T_i$, so that:
\[
\norm{\llbracket w_i\rrbracket  - \llbracket \vp\rrbracket }_\omega = \int_X (T_i - T)\wedge\omega^{n-1},
\]
and so $\llbracket w_i\rrbracket$ $\norm{\cdot}$-converges to $\llbracket \vp\rrbracket$; similar reasoning holds for $\llbracket v_i\rrbracket$. Then for any $\e > 0$ and $i$ sufficiently large we have:
\[
\norm{\llbracket \vp_i\rrbracket - \llbracket \vp\rrbracket} \leq  \norm{\llbracket w_i\rrbracket - \llbracket \vp_i\rrbracket} + \e \leq \norm{\llbracket w_i\rrbracket - \llbracket v_i\rrbracket} + \e \leq 3\e.
\]
Since we started by taking an arbitrary subsequence, this finishes the proof.
\end{proof}

\begin{thm}\label{Banach}
The space $\delta\mathcal{T}$ is a Banach space.
\end{thm}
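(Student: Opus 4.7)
The plan is to deduce completeness from the known completeness of $(\mathcal{S}_\delta, d_\mathcal{S})$ \cite[Theorem 4.9]{DDL4}, combined with Theorem \ref{converge}.

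First I would split a Cauchy sequence into its positive and negative parts. Given $(T_i) \subset \delta\mathcal{T}'$ Cauchy in $\norm{\cdot}_\omega$, Lemma \ref{convenient} allows us to write $T_i = A_i - B_i$ with $[A_i], [B_i]$ essentially disjoint plurisupported singularity types. The identity $\norm{T_i - T_j}_\omega = \norm{A_i - A_j}_\omega + \norm{B_i - B_j}_\omega$, which is a direct application of the decomposition claim from the proof of Proposition \ref{strong}, shows that each of $(A_i)$ and $(B_i)$ is Cauchy in $\norm{\cdot}_\omega$ on $\mathcal{T}'$. By the triangle inequality for $\norm{\cdot}_\omega$, it suffices to produce plurisupported limits $A$ and $B$ for these two sequences separately, for then $T := A - B$ will be the desired limit.

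Next, for the sequence $(A_i)$, I would lift each $A_i$ to an element of the finite-energy stratum $\mathcal{S}_\delta$, apply the DDL4 completeness there, and then descend via Theorem \ref{converge}. Since $\norm{A_i}_\omega = [A_i]\cdot [\omega]^{n-1}$ is bounded, the classes $[A_i]$ live in a bounded subset of the pseudoeffective cone, so, replacing $\omega$ by $C\omega$ for $C$ sufficiently large, I may assume $[\omega] - [A_i]$ is K\"ahler with $([\omega] - [A_i])^n \geq \delta > 0$ uniformly in $i$. Choose smooth K\"ahler representatives $\theta_i \in [\omega] - [A_i]$ and let $\vp_i \in \PSH(X,\omega)$ solve $\omega + i\p\pbar\vp_i = A_i + \theta_i$. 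Then $\llbracket \omega_{\vp_i}\rrbracket = A_i$, $\langle \omega_{\vp_i}\rangle = \theta_i$, and $\int_X \langle \omega_{\vp_i}^n\rangle = [\theta_i]^n \geq \delta$, so $[\vp_i] \in \mathcal{S}_\delta$. Repeating the binomial expansion plus Lemma \ref{kkforms} computation from the proof of Proposition \ref{equivalent} yields $d_\mathcal{S}(\vp_i, \vp_j) \leq C(n) \norm{A_i - A_j}_\omega \to 0$, so $(\vp_i)$ is $d_\mathcal{S}$-Cauchy in $\mathcal{S}_\delta$. By \cite[Theorem 4.9]{DDL4} there exists $[\vp] \in \mathcal{S}_\delta$ with $d_\mathcal{S}(\vp_i, \vp) \to 0$, and then Theorem \ref{converge} gives that $\llbracket \vp_i\rrbracket = [A_i]$ converges to $\llbracket\vp\rrbracket$ in $\norm{\cdot}_\omega$, which is the limit $A$ we need. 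The argument for $(B_i)$ is identical.

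The main obstacle is the following tension: plurisupported currents carry zero non-pluripolar mass, so plurisupported singularity types never lie directly in any $\mathcal{S}_\delta$, and DDL4's completeness is not directly applicable. The lifting trick is what resolves this: adding a smooth K\"ahler filler $\theta_i$ in the complementary class $[\omega] - [A_i]$ preserves the plurisupported singularity type of interest while supplying enough non-pluripolar mass to land in $\mathcal{S}_\delta$, where completeness holds. Theorem \ref{converge} is then precisely the bridge that makes this completion compatible with descent back to $\mathcal{T}'$, providing the only quantitative link between $d_\mathcal{S}$-convergence and $\norm{\cdot}_\omega$-convergence we have on the plurisupported side.
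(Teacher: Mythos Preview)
Your proposal is correct and follows essentially the same route as the paper: split into essentially disjoint positive and negative parts using the identity from Proposition \ref{strong}, bound the cohomology classes uniformly to land in a single $\PSH(X,\omega)$, use Proposition \ref{equivalent} to pass to a $d_\mathcal{S}$-Cauchy sequence, invoke \cite[Theorem 4.9]{DDL4}, and descend via Theorem \ref{converge}. Your explicit ``lifting'' (adding a smooth K\"ahler filler $\theta_i$ in $[\omega]-[A_i]$ to force $\int_X\langle\omega_{\vp_i}^n\rangle\geq\delta$) is just an unpacking of what the paper does implicitly when it chooses $\omega$ so that $[\omega\pm T_i]$ dominate a fixed multiple of $[\omega]$; the paper then cites Proposition \ref{equivalent} directly rather than rerunning the binomial/Lemma \ref{kkforms} computation, and closes with one extra line identifying the $d_\mathcal{S}$-limit $[\vp]$ with $\llbracket\vp\rrbracket$, which your formulation sidesteps by taking $A:=\llbracket\vp\rrbracket$ from the outset.
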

\begin{proof}
Suppose that $[\vp_i - \psi_i]$ is a $\norm{\cdot}$-Cauchy sequence, with $[\vp_i], [\psi_i]$ essentially disjoint for each $i$. Let $T_i, S_i$ be plurisupported representatives of $[\vp_i], [\psi_i]$, respectively. In order to apply Proposition \ref{equivalent} and \cite[Theorem 4.9]{DDL4} we need to produce a K\"ahler form $\omega$ such that the classes $[\omega \pm T_i], [\omega\pm S_i]$ each dominate some fixed, small multiple of $[\omega]$.

As mentioned in the proof of Proposition \ref{strong}, if $[\vp_i], [\psi_i]$ and $[\vp_j], [\psi_j]$ are essentially disjoint pairs, then:
\[
\norm{(\vp_i - \psi_i) - (\vp_j - \psi_j)} = \norm{\vp_i - \vp_j} + \norm{\psi_i - \psi_j},
\]
so that the sequences $[\vp_i]$ and $[\psi_i]$ are both $\norm{\cdot}$-Cauchy; in particular, both are $\norm{\cdot}$-bounded, so it follows that the cohomology classes $[T_i], [S_i]$ are all contained in the compact set $\{[\alpha]\in \mathcal{E}(X)\ |\ \int_X \alpha\wedge\omega^{n-1} \leq C\} \subseteq H^{1,1}(X, \R)$, where here $\mathcal{E}(X)$ is the pseudoeffective cone; the same reasoning applies to $[-T_i], [-S_i]$. Hence, since the K\"ahler cone is open, there exists a K\"ahler class $[\omega]$ such that each of the classes $[\omega \pm T_i], [\omega \pm S_i]$ is K\"ahler; increasing it slightly if necessary produces the desired $\omega$.

Hence, we conclude there exists $\vp, \psi\in\PSH(X,\omega)$ such that $[\vp_i]$ and $[\psi_i]$ $d_\mathcal{S}$-converge to $[\vp]$ and $[\psi]$, respectively; by Theorem \ref{converge}, we thus have that $[\vp_i]$ $\norm{\cdot}$-converges to $\llbracket \vp\rrbracket$. By Proposition \ref{equivalent}, $[\vp_i]$ $d_\mathcal{S}$-converges to $\llbracket\vp\rrbracket$ though (after possibly increasing $\omega$), and so we conclude $[\vp] = \llbracket \vp\rrbracket$ -- this also hold for $\psi$, finishing the proof.
\end{proof}

We can summarize the contents of the previous two theorems as follows:

\begin{cor}\label{carebear}
If $[\vp_i]$, $[\vp]$ are plurisupported singularity types, then the sequence $[\vp_i]$ $\norm{\cdot}$-converges to $[\vp]$ if and only if there exists some sufficiently large K\"ahler form $\omega$ such that $\vp_i, \vp\in \PSH(X,\omega)$ and $[\vp_i]$ $d_\mathcal{S}$-converges to $[\vp]$.
\end{cor}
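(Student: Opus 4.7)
The plan is to derive this corollary by combining Theorem \ref{converge} and Proposition \ref{equivalent} symmetrically: each direction of the biconditional uses one of those two results as its engine, with the remaining work being a choice of K\"ahler form and a verification of the non-pluripolar mass hypothesis of Theorem \ref{converge}.

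For the direction asserting that $d_\mathcal{S}$-convergence (for a sufficiently large $\omega$) implies $\norm{\cdot}_\omega$-convergence, the goal is to invoke Theorem \ref{converge}, which requires that $[\vp_i], [\vp] \in \mathcal{S}_\delta$ for some uniform $\delta > 0$. The key computation is that, for a plurisupported singularity type $[\vp_i]$ with $\vp_i \in \PSH(X,\omega)$, one can choose a plurisupported representative $T_i = \sigma_i + i\p\pbar\psi_i$ whose reference form is selected so that $\omega - \sigma_i$ is a smooth K\"ahler form (by choosing a smooth K\"ahler representative of the K\"ahler class $[\omega] - [T_i]$). Then $\omega_{\vp_i} = (\omega - \sigma_i) + T_i$ is essentially the pluripolar Siu decomposition of $\omega_{\vp_i}$, and since $T_i$ is supported on a pluripolar set while $\omega - \sigma_i$ is smooth, the computation of the non-pluripolar mass gives
$$\int_X \langle \omega_{\vp_i}^n\rangle = \int_X (\omega - \sigma_i)^n = [\omega - T_i]^n.$$
Taking $\omega$ sufficiently large (as the hypothesis allows), I would ensure $[\omega - T]^n > 0$; then $d_\mathcal{S}$-convergence of $[\vp_i]$ to $[\vp]$ provides continuity of the non-pluripolar mass and hence an eventual uniform lower bound $[\omega - T_i]^n \geq \delta/2 > 0$, with any finitely many initial terms handled by enlarging $\omega$. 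Theorem \ref{converge} then yields $\llbracket \vp_i\rrbracket = [\vp_i] \to [\vp] = \llbracket \vp\rrbracket$ in $\norm{\cdot}_\omega$.

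For the converse direction, I would reproduce the opening of the proof of Theorem \ref{Banach}: $\norm{\cdot}$-convergence of $[\vp_i]$ to $[\vp]$ forces the cohomology classes of the plurisupported representatives $T_i, T$ to lie in a compact subset of the pseudoeffective cone (their $\omega_0^{n-1}$-masses are uniformly bounded for any fixed K\"ahler $\omega_0$). Since the K\"ahler cone is open, I can select a K\"ahler form $\omega$ such that every $[\omega - T_i]$ and $[\omega - T]$ is K\"ahler, giving $\vp_i, \vp \in \PSH(X,\omega)$ after normalizing additive constants. Inequality \eqref{equivalent_1} of Proposition \ref{equivalent} then immediately produces
$$d_\mathcal{S}(\vp_i, \vp) \leq C(n)\norm{\vp_i - \vp}_\omega \to 0.$$

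The only mildly delicate step is the uniform lower bound on $\int_X \langle \omega_{\vp_i}^n\rangle$ in the first direction, which must be tracked through the choice of plurisupported representative and the enlargement of $\omega$; the continuity of non-pluripolar masses along $d_\mathcal{S}$-convergent sequences is what makes this obstruction go away. Everything else is a direct citation of Theorem \ref{converge}, Proposition \ref{equivalent}, and the cohomology-class argument already used in Theorem \ref{Banach}.
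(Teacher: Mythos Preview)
Your proposal is correct and follows exactly the route the paper intends: the corollary is stated immediately after Theorems \ref{converge} and \ref{Banach} as a ``summary of the contents of the previous two theorems,'' with no separate proof given. Your two directions invoke precisely those results (Theorem \ref{converge} for $d_\mathcal{S}\Rightarrow\norm{\cdot}$, and the cohomology-bounding argument from Theorem \ref{Banach} together with inequality \eqref{equivalent_1} for $\norm{\cdot}\Rightarrow d_\mathcal{S}$), which is the intended argument.

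One small simplification: in the first direction you propose to handle the finitely many initial terms by enlarging $\omega$, but this is unnecessary and slightly awkward (enlarging $\omega$ changes the $d_\mathcal{S}$ metric). Since convergence is a tail property, you can simply drop those finitely many terms; the continuity of non-pluripolar masses under $d_\mathcal{S}$-convergence already gives $\int_X\langle\omega_{\vp_i}^n\rangle\to[\omega-T]^n>0$ (positivity being exactly what ``sufficiently large $\omega$'' buys), so the tail lies in some $\mathcal{S}_\delta$ and Theorem \ref{converge} applies to it directly.
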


We conclude this section with a sufficient condition for the singular and the regular Siu decompositions to agree, to help place the pluripolar Siu decomposition in some more context. As any practitioner can tell you, most explicit currents are $\mathcal{I}$-model, and almost all geometrically interesting ones seem to be as well -- for instance, $V_\theta$ is always $\mathcal{I}$-model. As such, the pluripolar Siu decomposition should really only need to appear if one ``seeks it out," so to say.

\begin{prop}\label{I-sings}
Suppose that $\vp\in\PSH(X,\theta)$ is $\mathcal{I}$-model. By \cite{DX}, it is in the $d_\mathcal{S}$-closure of the space of analytic singularity types. Then $\llbracket \vp\rrbracket$ is a (possibly infinite) sum of currents of integration along divisors.
\end{prop}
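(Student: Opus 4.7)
The plan is to combine the Darvas–Xia density of analytic singularity types inside $\mathcal{I}$-model singularities with Theorem \ref{converge} and the Banach completeness of $\delta\mathcal{T}'$ from Theorem \ref{Banach}. By \cite{DX}, choose $\vp_i\in\PSH(X,\theta)$ with analytic singularities such that $[\vp_i]$ $d_\mathcal{S}$-converges to $[\vp]$; after replacing $\vp_i$ by $\max\{\vp_i,V_\theta-k_i\}$ for suitable $k_i$ and, if necessary, perturbing $\theta$ by a small K\"ahler form (neither operation changes $\llbracket\theta_{\vp_i}\rrbracket$ or $\llbracket\theta_\vp\rrbracket$, by Proposition \ref{ignore}), we may assume $\vp_i,\vp\in\mathcal{S}_\delta$ for some $\delta>0$. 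Theorem \ref{converge} then gives $T_i:=\llbracket\theta_{\vp_i}\rrbracket\to T:=\llbracket\theta_\vp\rrbracket$ in $\norm{\cdot}$. Because $\vp_i$ has analytic singularities, its classical Siu decomposition coincides with its pluripolar Siu decomposition: the classical residual has generic Lelong number zero along every divisor and is smooth outside a codimension-$\geq 2$ analytic subset of $\{\vp_i=-\infty\}$, which a $(1,1)$-current cannot charge, so this residual lies in $\langle\theta_{\vp_i}\rangle$. Thus each $T_i=\sum_j c_j^i\llbracket D_j^i\rrbracket$ is a finite non-negative $\R$-linear combination of currents of integration along divisors.

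Next, extract a subsequence with $\norm{T_{i+1}-T_i}<2^{-i}$ and apply the minimum construction of subsection \ref{subsect_maxmin} to the pair $T_i,T_{i+1}$: letting $\ov{T}_i$ denote the minimum of $T_i$ and $T_{i+1}$, write $T_{i+1}-T_i=U_i^+-U_i^-$ with $U_i^+:=T_{i+1}-\ov{T}_i\geq 0$ and $U_i^-:=T_i-\ov{T}_i\geq 0$. By Lemma \ref{convenient}, $U_i^+$ and $U_i^-$ are essentially disjoint positive plurisupported currents with $\norm{U_i^+}+\norm{U_i^-}=\norm{T_{i+1}-T_i}$. Each $U_i^\pm$ is supported in the finite union of divisors $\mathrm{Supp}(T_i)\cup\mathrm{Supp}(T_{i+1})$, and so has support of locally finite $(2n-2)$-Hausdorff measure; Federer's support theorem \cite[Thm.~4.1.20]{Fed} together with Siu's analytic decomposition then forces each $U_i^\pm$ to itself be a finite non-negative $\R$-linear combination of currents of integration along divisors.

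The telescoping series $P:=T_1+\sum_i U_i^+$ and $N:=\sum_i U_i^-$ are absolutely $\norm{\cdot}$-convergent and, by Theorem \ref{Banach}, define positive plurisupported currents with $T=P-N$. Enumerate the countable set $\{E_m\}_{m\geq 1}$ of all divisors appearing in any summand of $T_1$ or the $U_i^\pm$ and collect coefficients across the monotone increasing partial sums to obtain $P=\sum_m b_m\llbracket E_m\rrbracket$ and $N=\sum_m c_m\llbracket E_m\rrbracket$, both series absolutely $\norm{\cdot}$-convergent (the mass bounds $\sum_m b_m\vol(E_m)\leq\norm{P}<\infty$ and similarly for $N$ are automatic from monotonicity). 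Since distinct divisors are linearly independent as currents, the generic Lelong number of $T=P-N$ along each $E_m$ equals $b_m-c_m$; positivity $T\geq 0$ forces $b_m-c_m\geq 0$, and the claimed identity $T=\sum_m(b_m-c_m)\llbracket E_m\rrbracket$ follows. The main technical hurdle is this final reorganization step: the approximants $T_i$ may involve mutually disjoint divisor sets as $i$ varies, so $T$ is \emph{a priori} only controlled abstractly as a Banach limit, and it is precisely the Federer-based reduction of each \emph{pairwise} difference $U_i^\pm$ (whose support is a finite union of divisors) together with the $\ell^1$-summability $\sum_i\norm{U_i^\pm}<\infty$ that makes the divisor-by-divisor bookkeeping go through.
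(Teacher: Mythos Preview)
Your overall strategy is exactly the paper's: invoke \cite{DX} for analytic approximants, use Theorem~\ref{converge} to pass from $d_\mathcal{S}$-convergence to $\norm{\cdot}$-convergence of the singular parts, deduce strong convergence via Proposition~\ref{strong}, and conclude that a strong limit of divisorial currents is divisorial. The paper compresses all of this into two sentences; your telescoping argument (steps involving $U_i^\pm$, Federer, and the divisor-by-divisor bookkeeping) is a legitimate and more explicit justification of the paper's bare assertion ``this can only converge to a sum of divisors.''

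There is, however, a genuine error in your reduction to $\mathcal{S}_\delta$. You claim that replacing $\vp_i$ by $\max\{\vp_i, V_\theta - k_i\}$ leaves $\llbracket\theta_{\vp_i}\rrbracket$ unchanged, citing Proposition~\ref{ignore}. But Proposition~\ref{ignore} concerns the envelopes $P_\theta[\vp]$ and $\mathcal{C}_\theta(\vp)$, not truncation by $\max$ with $V_\theta - k$. In fact $\max\{\vp_i, V_\theta - k_i\}$ is \emph{less} singular than $\vp_i$, so by Proposition~\ref{sing_ordering} one only has $\llbracket\max\{\vp_i, V_\theta - k_i\}\rrbracket \leq \llbracket\vp_i\rrbracket$, and the inequality is typically strict: if $[\theta]$ is K\"ahler then $V_\theta$ is bounded, the truncation is bounded, and its singular part vanishes entirely---destroying the very divisorial currents you want to track. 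The perturbation of $\theta$ by $\e\omega$ is fine (that genuinely preserves $\llbracket\cdot\rrbracket$, as the singular part depends only on $[\vp]$), but the $\max$ step is not.

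The paper itself glosses over the $\mathcal{S}_\delta$ hypothesis when citing Corollary~\ref{carebear}, so you are not alone in needing to address it; but your proposed fix does not work as written. One route that does work: after perturbing to a big class, note that the $d_\mathcal{S}$-approximants from \cite{DX} can be taken decreasing toward $\vp$, and feed these directly into the monotone half of the proof of Theorem~\ref{converge} (the $w_i$-side), which does not require the full $\mathcal{S}_\delta$ sandwich.
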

\begin{proof}
By Corollary \ref{carebear} and Proposition \ref{strong}, $\llbracket \vp\rrbracket$ is a strong limit of a sequence of currents of integration along divisors; this can only converge to a sum of divisors.
\end{proof}


\section{Estimate on the Pluripolar Mass of Envelopes}


In this section, we generalize the second part of the proof of \cite{WN1}, establishing an inequality for the pluripolar mass of certain envelopes of psh functions (Theorem \ref{ppbound_intro}/ \ref{ppbound}). We then use this to show the McKinnon-Roth inequality \cite{MR} for arbitrary pseudoeffective classes on compact K\"ahler manifolds, and prove directly Demailly's weak transcendental holomorphic Morse inequality when the class $[\beta]$ is only modified nef and plurisupported (c.f. \cite{BFJ} or \cite[Prop. 1.1]{Xiao3}). Lastly, we show that this is enough to repeat the arguments in the appendix of \cite{WN1}, proving the BDPP conjecture under the assumption that $X$ admits a plurisupported K\"ahler class.

\subsection{Pluripolar Mass Inequality}

We begin with the mass inequality, which gives an upper-bound on the pluripolar mass of envelopes when the cohomology class is shifted. Our proof is a generalization of \cite{WN1}, using the recent technical improvements of \cite{DDL2} (see also \cite{DT}, where they use a similar technique in the proof of their main result). 

A weaker version of Theorem \ref{ppbound} can be found in the author's thesis \cite[Theorem 3.2.1]{McC3}; compared with that version, the proof has been simplified (in such a way that it no longer relies on the results in \cite{DT}), and the conclusion has been strengthened, now applying to pseudoeffective classes (\cite[Theorem 3.2.1]{McC3} was only stated for K\"ahler classes).

We begin with a simple lemma, which is essentially just \cite[Lemma 4.4]{DDL4}:
\begin{lem}\label{lemma}
Let $X^n$ be a compact K\"ahler manifold. Suppose that $\theta, \sigma, \rho$ are each smooth, closed real $(1,1)$-forms such that $[\theta], [\sigma],$ and $[\rho]$ are pseudoeffective classes, and additionally:
\[
\theta = \sigma+ \rho.
\]
Then if $\psi\in\PSH(X,\theta), \eta\in\PSH(X,\rho)$ are such that $P_\sigma(\psi-\eta)\not=-\infty$, we have:
\[
\langle \sigma_{P_{\sigma}(\psi-\eta)}^n\rangle \leq \langle\theta_\psi^n\rangle.
\]
\end{lem}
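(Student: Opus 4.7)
The plan is to identify $u := P_\sigma(\psi - \eta)$ and note that $u+\eta \in \PSH(X,\theta)$ satisfies $u+\eta \leq \psi$, so $u + \eta$ is more singular than $\psi$. Then, by interpreting everything on the plurifine contact set, the inequality reduces to the pointwise positivity of the ``cross terms'' in the multinomial expansion of $\theta_{u+\eta}^n = (\sigma_u + \rho_\eta)^n$.

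More precisely, I would proceed in three steps. First, I would invoke the standard ``envelope concentration'' property of non-pluripolar Monge--Amp\`ere measures (as in \cite{DDL4}): since $u$ is the $\sigma$-psh envelope of $\psi - \eta$, the measure $\langle \sigma_u^n\rangle$ vanishes on the plurifine open set $\{u < \psi - \eta\}$. Thus $\langle\sigma_u^n\rangle$ is concentrated on the contact set $C := \{u = \psi - \eta\} \cap \{u > -\infty\} \cap \{\eta > -\infty\}$, modulo the pluripolar set $\{u = -\infty\}\cup\{\eta = -\infty\}$, which has zero non-pluripolar mass. Second, on the plurifine open set $C$ the two $\theta$-psh functions $u+\eta$ and $\psi$ agree, so plurifine locality of the non-pluripolar product \cite{BEGZ} gives
\[
\mathbbm{1}_C \langle \theta_\psi^n\rangle = \mathbbm{1}_C\langle \theta_{u+\eta}^n\rangle.
\]
Third, on $C$ one has $\theta_{u+\eta} = \sigma_u + \rho_\eta$ as a sum of closed positive currents with locally bounded potentials (working plurifine-locally), so one can expand
\[
\langle \theta_{u+\eta}^n\rangle = \sum_{k=0}^n \binom{n}{k} \langle \sigma_u^k \wedge \rho_\eta^{n-k}\rangle \geq \langle \sigma_u^n \rangle,
\]
since each cross term is a non-negative measure.

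Combining these: off $C$ the left-hand side vanishes by Step 1, while on $C$ the chain
\[
\langle \sigma_u^n\rangle \leq \langle \theta_{u+\eta}^n\rangle = \langle \theta_\psi^n\rangle
\]
yields the inequality. Integrating (or simply comparing as measures) gives the conclusion, whichever interpretation is intended.

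The main technical point, as the author indicates by citing \cite[Lemma 4.4]{DDL4}, is verifying that the envelope concentration and plurifine locality machinery go through in this general pseudoeffective, possibly unbounded setting (where $\eta$ is only assumed quasi-psh, not of minimal singularities). Once one grants the BEGZ/DDL plurifine framework for non-pluripolar products, the proof is essentially a one-line multinomial expansion on the contact set together with vanishing off it; the substance lies entirely in the envelope-concentration statement, which is where I would expect to spend the bulk of the verification effort.
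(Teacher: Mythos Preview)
Your proposal is correct and follows essentially the same route as the paper's proof: set $u=P_\sigma(\psi-\eta)$, use the envelope-concentration result from \cite[Lemma 4.4]{DDL4} to localize $\langle\sigma_u^n\rangle$ to the contact set $\{u+\eta=\psi\}$, then apply plurifine locality of the non-pluripolar product together with $\langle\sigma_u^n\rangle\leq\langle(\sigma_u+\rho_\eta)^n\rangle$. One small wording caveat: the contact set $C$ is not itself plurifine open, but since $u+\eta\leq\psi$ the needed locality identity $\chi_C\langle\theta_{u+\eta}^n\rangle=\chi_C\langle\theta_\psi^n\rangle$ is still a standard consequence of the BEGZ framework (via the usual $\max$-approximation), so this does not affect the argument.
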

\begin{proof}
Set $\vp := P_\sigma(\psi-\eta)$. By the proof of \cite[Lemma 4.4]{DDL4} (see also \cite[Prop 3.10]{LN}), the measure $\langle \sigma_\vp^n\rangle$ is supported on the contact set $\{\vp +\eta = \psi\}$.

Now, note that $\vp + \eta\in\PSH(X,\theta)$ and that $\vp + \eta \leq \psi$. So:
\[
\langle\sigma_\vp^n\rangle \leq \chi_{\{\vp + \eta = \psi\}}\langle(\sigma_\vp + \rho_\eta)^n\rangle = \chi_{\{\vp + \eta = \psi\}}\langle\theta_{\vp + \eta}^n\rangle = \chi_{\{\vp + \eta = \psi\}}\langle\theta_{\psi}^n\rangle \leq \langle\theta_\psi^n\rangle,
\]
by pluri-locality of the non-pluripolar product \cite{BEGZ}.
\end{proof}

\begin{thm}\label{ppbound}
Let $X^n$ be a compact K\"ahler manifold, and suppose that $[\theta], [\sigma], [\rho]$ are pseudoeffective classes on $X$ such that $\theta = \sigma+\rho$. Let $\xi, \psi\in\PSH(X,\theta)$ and $\eta\in\PSH(X,\rho)$ be such that $P_\sigma(\psi - \eta) \in\PSH(X,\sigma)$ is not $-\infty$, $\psi\leq\xi$ (so that $P_\sigma(\xi - \eta) \not=-\infty$ also), and $\eta$ has small unbounded locus. Then we have the following estimate:
\begin{equation}\label{blah}
\int_X \langle\sigma^n_{P_\sigma(\xi - \eta)}\rangle - \int_X \langle \sigma_{P_\sigma(\psi - \eta)}^n\rangle \leq \int_X\langle\theta^n_\xi\rangle - \int_X \langle\theta_\psi^n\rangle.
\end{equation}
\end{thm}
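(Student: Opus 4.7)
The plan is to follow the strategy of \cite{WN1}, upgraded with the non-pluripolar calculus of \cite{DDL2, DT}, to establish what is essentially a monotonicity statement for the ``envelope defect.'' Rewriting \eqref{blah} as
\[
D(\xi) \geq D(\psi), \qquad \text{where}\qquad D(\chi) := \int_X \langle\theta_\chi^n\rangle - \int_X \langle\sigma_{P_\sigma(\chi-\eta)}^n\rangle,
\]
and noting that each $D(\chi) \geq 0$ by Lemma \ref{lemma}, the theorem reduces to showing that this defect is non-decreasing as the upper barrier $\chi$ increases from $\psi$ to $\xi$.

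I would start by sharpening Lemma \ref{lemma} using pluri-locality: on the contact set $A_\chi := \{P_\sigma(\chi-\eta) + \eta = \chi\}$, one has the pointwise identity of currents $\sigma_{P_\sigma(\chi-\eta)} + \rho_\eta = \theta_\chi$, and hence by multilinearity of the non-pluripolar product, $\chi_{A_\chi}\langle\theta_\chi^n\rangle$ decomposes as a sum of positive mixed masses $\binom{n}{k}\langle \sigma_{P_\sigma(\chi-\eta)}^{n-k}\wedge\rho_\eta^k\rangle$. Since the proof of Lemma \ref{lemma} already shows $\langle\sigma_{P_\sigma(\chi-\eta)}^n\rangle$ is concentrated on $A_\chi$, the defect $D(\chi)$ breaks cleanly into the off-contact mass $\int_{X\setminus A_\chi}\langle\theta_\chi^n\rangle$ plus a mixed-mass contribution $\sum_{k\geq 1}\binom{n}{k}\int_{A_\chi}\langle\sigma_{P_\sigma(\chi-\eta)}^{n-k}\wedge\rho_\eta^k\rangle$. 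This decomposition is the cleanest route to comparing $D(\xi)$ and $D(\psi)$ since both summands are positive.

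The main step is then to compare these quantities across $\psi$ and $\xi$. Following \cite{WN1}, I would pass to approximations where $\eta$ is replaced by $\eta_k := \max(\eta, V_\rho - k)$ (which is bounded off a small pluripolar set, exploiting the small unbounded locus hypothesis) and the envelopes are handled via the standard integration-by-parts formulae. With these approximations in place, I would use the monotonicity of non-pluripolar products under increasing limits from \cite{DDL2, WN2} to reduce to an integration-by-parts identity along the one-parameter family $\chi_t := (1-t)\psi + t\xi \in \PSH(X,\theta)$. Computing $dD(\chi_t)/dt$ in the regularized setting should express the derivative as an integral of positive terms involving the non-contact regions and the mixed masses; the small unbounded locus of $\eta$ is precisely what permits the required integration by parts.

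The principal obstacle I anticipate is the bookkeeping on the contact sets $A_\chi$: they need not vary monotonically with $\chi$, and the non-pluripolar product is delicate on the pluripolar set $\{\eta = -\infty\}$. I expect the resolution to lie in the rooftop-envelope framework of \cite{DDL2, DT}, where one exploits the fact that the projection operator $P_\sigma$ intertwines nicely with the Monge-Amp\`ere operator under monotone perturbations of both the function and the cohomology class. In particular, I would try to apply Lemma \ref{lemma} iteratively along the path $\chi_t$ (or equivalently, to differences of the form $\chi_{t+\epsilon} - \chi_t$ combined with the envelope-within-an-envelope identities) in order to obtain the inequality at the level of total masses without ever needing a pointwise comparison of the contact sets themselves --- this is the step that cleanly circumvents the K\"ahler-class restriction present in \cite[Theorem 3.2.1]{McC3}.
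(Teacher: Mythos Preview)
Your reformulation as monotonicity of the defect $D(\chi)$ and the contact-set decomposition are both correct and clarifying, but the core of your plan---differentiating $D(\chi_t)$ along the linear path $\chi_t=(1-t)\psi+t\xi$---has a genuine gap. The $\theta$-term is unproblematic (by multilinearity $\int_X\langle\theta_{\chi_t}^n\rangle$ is polynomial in $t$), but the envelope $P_\sigma(\chi_t-\eta)$ is \emph{not} a convex combination of $P_\sigma(\psi-\eta)$ and $P_\sigma(\xi-\eta)$, so there is no integration-by-parts identity of the sort you invoke. Variational formulae for envelope masses do exist in special situations (e.g.\ \cite{WN1} when the obstacle is a divisor potential and the class is ample), but none of them apply to a generic $\eta$ with small unbounded locus in a merely pseudoeffective class, and your proposal to ``apply Lemma~\ref{lemma} iteratively'' does not supply one. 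The contact sets $A_{\chi_t}$ can jump wildly, and nothing in the rooftop calculus of \cite{DDL2,DT} produces the positivity of $dD/dt$ you need.

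The paper's argument sidesteps this entirely, and the key idea is different from anything in your outline. Rather than interpolate, it approximates $\psi$ from above by
\[
\psi_j:=P_\theta\bigl(\min\{\tilde\psi_j,\xi\}\bigr),\qquad \tilde\psi_j\in C^\infty(X),\ \tilde\psi_j\searrow\psi,
\]
so that $[\psi_j]=[\xi]$ and hence $[\vp_j]=[P_\sigma(\xi-\eta)]$ for $\vp_j:=P_\sigma(\psi_j-\eta)$. By Witt Nystr\"om's monotonicity \cite{WN2}, the masses at level $j$ already coincide with the $\xi$-side masses, so there is \emph{nothing to prove before the limit}: the entire inequality is pushed into the passage $j\to\infty$. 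That limit is handled not by regularizing $\eta$, but by the quasi-continuous cut-offs $h_{R,\e}(v)=\max\{v-V_\sigma+R,0\}/(\max\{v-V_\sigma+R,0\}+\e)$ of \cite{DDL2}, combined with Xing's lemma on an exhaustion $U_i$ of the complement of the small unbounded loci of $V_\sigma$ and $\eta$. Lemma~\ref{lemma} enters only once, to bound $\langle\sigma_{\vp_j}^n\rangle\leq\langle\theta_{\psi_j}^n\rangle$ inside the limiting expression; no decomposition into off-contact and mixed masses is needed.
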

\begin{proof}
If $[\sigma]$ is only pseudoeffective, the left-hand side of \eqref{blah} is 0, and so we are done by \cite[Thrm. 1.2]{WN2}. We can thus assume that $[\sigma]$ is big. Without loss of generality, we may also assume that $\psi, \xi, \eta \leq 0$.

Let $\vp := P_\sigma(\psi - \eta)$ and choose a decreasing sequence of smooth functions $\ti{\psi}_j\in C^\infty(X)$ such that $\ti{\psi}_j \searrow \psi$. Define:
\[
\psi_j := P_\theta(\min\{\ti{\psi}_j, \xi\}).
\]
Then each $\psi_j\in\PSH(X,\theta)$ has the same singularity type as $\xi$ and we still have $\psi_j\searrow\psi$ as $j\rightarrow\infty$. Similarly, define:
\[
\vp_j := P_\sigma(\psi_j - \eta)
\]
and note that $[\vp_j] = [P_\sigma(\xi - \eta)]$ and $\vp_j\searrow \vp$.

Let:
\[
V_\sigma := \sup\{v\in\PSH(X,\sigma)\ |\ v\leq 0\}^* := P_\sigma(0)
\]
be the $\sigma$-psh function with minimal singularities; since $[\sigma]$ is big, by Demailly's approximation theorem \cite{Dem3}, $V_\sigma$ has small unbounded locus. We can find then an increasing sequence of open sets $U_i$, $i\rightarrow\infty$, such that $X\setminus\left(\cup_i U_i\right)$ is contained inside a closed, pluripolar set, and such that $V_\sigma \geq V_\sigma + \eta \geq -C_i$ on $U_i$, for some sequence $0 \leq C_i \rightarrow \infty$.

We will now recall the cut-off functions of \cite{DDL2}. Given any $R, \e > 0$ and $v\in\PSH(X,\sigma)$, we write:
\[
h_{R,\e}(v) := \frac{\max\{v - V_\sigma + R, 0\}}{\max\{v - V_\sigma + R, 0\} + \e}.
\]
Note that $0\leq h_{R,\e}(v) \leq \chi_{\{v > V_\sigma -R\}}$, so that $v = \max\{v, V_\sigma - R\}$ on the (pluri-open) set $\{h_{R,\e}(v) > 0\} = \{v > V_\sigma - R\}$. Further observe that $h_{R,\e}(v)$ is quasi-continuous, and increases as $\e\rightarrow0$ to $\chi_{\{v > V_\sigma -R\}}$. Finally, and crucially, note that if a sequence $v_j\in\PSH(X,\sigma)$ converges in capacity to $v$, then the sequence $h_{R,\e}(v_j)$ also converge in capacity to $h_{R,\e}(v)$.

Fix now some index $i$. Then we have:
\[
\int_X \langle \sigma_\vp^n\rangle \geq \int_{U_i} h_{R,\e}(\vp)\langle \sigma_\vp^n\rangle = \int_{U_i} h_{R,\e}(\vp) \sigma_{\max\{\vp, V_\sigma - R\}}^n,
\]
by locality of the non-pluripolar product \cite{BEGZ}. By Xing's convergence lemma \cite{Xing} (c.f. \cite[Thrm. 4.26]{GZ_book}), the right-hand side is then equal to:
\begin{equation}\label{bbb}
\int_{U_i}h_{R,\e}(\vp) \sigma_{\max\{\vp, V_\sigma - R\}}^n = \lim_{j\rightarrow\infty}\int_{U_i}h_{R,\e}(\vp_j)  \sigma_{\max\{\vp_j, V_\sigma - R\}}^n = \lim_{j\rightarrow\infty}\int_{U_i} h_{R,\e}(\vp_j)  \langle\sigma_{\vp_{j}}^n\rangle
\end{equation}
\[
 = \lim_{j\rightarrow\infty} \left(\int_X \langle\sigma_{\vp_{j}}^n\rangle - \int_X (1 - \chi_{U_i} h_{R,\e}(\vp_j))  \langle\sigma_{\vp_{j}}^n\rangle\right)
\]
as $\max\{\vp_j, V_\sigma -  R\} \geq V_\sigma-R \geq -C_i-R$ on $U_i$. 

By Lemma \ref{lemma} we have that:
\[
\langle \sigma_{\vp_j}^n\rangle \leq \langle \theta_{\psi_j}^n\rangle,
\]
and so:
\[
\int_X \langle \sigma_{\vp}^n\rangle  \geq \lim_{j\rightarrow\infty} \left(\int_X \langle\sigma_{\vp_{j}}^n\rangle - \int_X (1 - \chi_{U_i} h_{R,\e}(\vp_j))  \langle\theta_{\psi_{j}}^n\rangle\right).
\]
Recalling that $[\psi_j] = [\xi]$ and $[\vp_j] = [P_\sigma(\xi - \eta)]$, we can apply \cite{WN2} to get:
\[
\int_X \langle \sigma_{\vp}^n\rangle \geq \int_X \langle \sigma_{P_\sigma(\xi-\eta)}^n\rangle - \int_X\langle \theta_\xi^n\rangle + \lim_{j\rightarrow\infty} \int_{U_i} h_{R,\e}(\vp_j)  \langle \theta_{\psi_{j}}^n\rangle.
\]
Now:
\[
\{h_{R,\e}(\vp_j) > 0\} = \{\vp_j > V_\sigma - R\} \subseteq \{\psi_j > V_\sigma + \eta - R\},
\]
so by pluri-locality we again get:
\[
\int_{U_i} h_{R,\e}(\vp_j)  \langle \theta_{\psi_{j}}^n\rangle = \int_{U_i} h_{R,\e}(\vp_j)  \theta_{\max\{\psi_{j}, V_\sigma + \eta - R\}}^n.
\]
On $U_i$ we also have 
that $V_\sigma + \eta - R \geq -C_i - R$, so we can apply Xing's lemma to take the limit again, and, after using pluri-locality once more, get:
\[
\int_X \langle \sigma_{\vp}^n\rangle \geq \int_X \langle \sigma_{P_\sigma(\xi-\eta)}^n\rangle - \int_X\langle \theta_\xi^n\rangle + \int_{U_i} h_{R,\e}(\vp) \langle \theta_{\psi}^n\rangle.
\]
Letting $\e\rightarrow0$ and then $R\rightarrow\infty$, we see that $h_{R,\e}(\vp)\nearrow \chi_{\{\vp > \infty\}}$. Since the non-pluripolar product doesn't charge the pluripolar set $\{\vp = -\infty\}$, we thus have:
\[
\int_X \langle \sigma_{\vp}^n\rangle \geq \int_X \langle \sigma_{P_\sigma(\xi-\eta)}^n\rangle - \int_X\langle \theta_\xi^n\rangle + \int_{U_i}\langle \theta_{\psi}^n\rangle.
\]
Letting $i\rightarrow\infty$ and applying the same reasoning finishes the proof.
\end{proof}

\begin{exmp}\label{ASDFASDF}
We provide an example to show that the inclusion of the function $\eta$ in Theorem \ref{ppbound} is necessary when $[\rho]$ is not nef. Take $X$ to be the blow-up of $\CP^n$, $n > 1$, at a single point;
\[
\mu: X\rightarrow \CP^n.
\]
Let $E\subset X$ be the exceptional divisor and $\omega_{FS}$ the Fubini-Study metric on $\CP^n$. By \cite{DP}, there exists a smooth form $R\in[E]$ such that $\mu^*\omega_{FS} - \e R$ is a K\"ahler form for all $\e > 0$ sufficiently small -- fix a number $t > 0$ such that $\mu^*\omega_{FS} - 2tR$ and $\mu^*\omega_{FS} - tR$ are both K\"ahler forms. Let $\xi$ be the unique (up to normalization) $R$-psh function:
\[
R + i\p\pbar\xi = \llbracket E\rrbracket.
\]

We now define $\sigma = \mu^*\omega_{FS}$ and $\theta = \mu^*\omega_{FS} + tR$, so that $[\theta - \sigma] = t[E]$ is pseudoeffective. It follows that $t\xi$ and $2t\xi$ are both $\theta$-psh and $\sigma$-psh. We have:
\[
\int_X \langle \sigma_{t\xi}^n\rangle - \int_X \langle \sigma_{2t\xi}^n\rangle = \int_X (\mu^*\omega_{FS} - tR)^n - \int_X (\mu^*\omega_{FS} - 2tR)^n.
\]
By a simple intersection number calculation, we have that:
\[
\int_X(\mu^*\omega_{FS} - \e R)^n = \int_{\CP^n}\omega_{FS}^n - \e^n = 1- \e^n,
\]
for all $\e > 0$ sufficiently small, so that:
\[
\int_X \langle \sigma_{t\xi}^n\rangle - \int_X \langle \sigma_{2t\xi}^n\rangle = (1-t^n) - (1- (2t)^n) = (2^n - 1)t^n.
\]
Similarly, we compute that:
\[
\int_X \langle \theta_{t\xi}^n\rangle - \int_X \langle \theta_{2t\xi}^n\rangle = 1 - (1-t^n) = t^n.
\]
As $n > 1$, we see that inequality \eqref{blah} does not hold in this case.

Adding even a large multiple of $\omega_{FS}$ to $\theta$ in the above example will not change the value of the difference; as such, the inequality still fails without $\eta$, even if $[\rho]$ is big with arbitrarily large volume (but not nef).
\end{exmp}


\subsection{The McKinnon-Roth Inequality}


By slightly updating the argument in \cite{McC3}, we can use Theorem \ref{ppbound} to prove the McKinnon-Roth inequality \cite{MR} for any pseudoeffective class $[\theta]$. As with Theorem \ref{ppbound}, this result was shown for K\"ahler classes in \cite{McC3}. 

\begin{thm}
Suppose that $[\theta]$ is a pseudoeffective class on $X$, and that:
\[
\mu: \ti{X} \rightarrow X
\]
is the blow-up of $X$ at a point $x\in X$; let $E$ be the exceptional divisor. Then:
\begin{equation}\label{MR}
\vol_{\ti{X}}(\mu^*\theta - t[E])\geq \vol_X(\theta) - t^n\ \ \forall t \in [0, \zeta_x(\theta)],
\end{equation}
where here $\zeta_x(\theta)$ is the Nakayama constant (also called the pseudoeffective threshold).
\end{thm}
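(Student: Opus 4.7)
The strategy is the one in \cite{McC3} for K\"ahler classes, now upgraded using Theorem~\ref{ppbound}. The essential improvement Theorem~\ref{ppbound} brings is that the auxiliary class $[\rho]$ can be merely pseudoeffective, provided one allows an auxiliary $\eta\in\PSH(\rho)$ with only small unbounded locus -- for the present application this is automatic, since one takes $\eta = t\xi_E$, whose polar set is contained in the divisor $E$.

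First I would record the standard bijection on $\ti{X}$. Under $\vp\mapsto \mu^*\vp - t\xi_E$, Lemma~\ref{subtraction} gives a bijection
\[
\{\vp \in \PSH(X,\theta) \ :\  \nu(\vp,x) \geq t\}\ \longleftrightarrow\ \PSH(\ti{X}, \mu^*\theta - tR),
\]
and on the $\ti{X}$ side the corresponding current equals $\mu^*\theta_\vp - t\llbracket E\rrbracket$. Since $\llbracket E\rrbracket$ is plurisupported and $\mu$ is an isomorphism off $E$, computing non-pluripolar mass on the complement of $E\cup\{\vp\circ\mu = -\infty\}$ yields
\[
\vol_{\ti{X}}(\mu^*\theta - tE) \;=\; \sup_{\vp\in\PSH(X,\theta),\,\nu(\vp,x)\geq t}\int_X \langle \theta^n_\vp\rangle.
\]
Thus it suffices to produce a single $\vp_t\in\PSH(X,\theta)$ with $\nu(\vp_t,x)\geq t$ and $\int_X\langle\theta^n_{\vp_t}\rangle \geq \vol_X(\theta) - t^n$.

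Next I would construct $\vp_t$ as an envelope with a prescribed logarithmic pole at $x$, and use Theorem~\ref{ppbound} to estimate its non-pluripolar Monge-Amp\`ere mass. Working on $\ti{X}$, apply Theorem~\ref{ppbound} in the setup
\[
\theta_{\mathrm{Thm}} = \mu^*\theta,\quad \sigma_{\mathrm{Thm}} = \mu^*\theta - tR,\quad \rho_{\mathrm{Thm}} = tR,\quad \eta_{\mathrm{Thm}} = t\xi_E,
\]
with $\xi_{\mathrm{Thm}} = V_{\mu^*\theta}$ and $\psi_{\mathrm{Thm}} = \mu^*\vp_t$ (lifted to $\ti{X}$). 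The envelope $\vp_t$ is chosen so that its local Monge-Amp\`ere contribution at $x$ is the standard Dirac mass $(i\p\pbar(t\log|z-x|^2))^n = t^n \delta_x$, following the Demailly-type comparison argument that underlies the classical McKinnon-Roth inequality.

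Translating the statement of Theorem~\ref{ppbound} through the bijection above, the $\sigma$-side integrals recover $\vol(\mu^*\theta - tE)$ and $\int_X\langle\theta^n_{\vp_t}\rangle$, while the $\theta$-side integrals recover $\vol(\theta)$ and $\int_X\langle\theta^n_{\vp_t}\rangle$ up to the Demailly Dirac contribution of exactly $t^n$. Rearranging gives the desired lower bound $\int_X\langle\theta^n_{\vp_t}\rangle \geq \vol(\theta) - t^n$, and hence $\vol(\mu^*\theta - tE)\geq \vol(\theta)-t^n$.

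The main obstacle is making the Demailly-type local computation interact cleanly with the (possibly wild) singularities of $V_{\mu^*\theta}$ when $[\theta]$ is only pseudoeffective -- in the K\"ahler case of \cite{McC3}, $V_\theta$ is bounded and the argument is essentially a direct Monge-Amp\`ere calculation, but in the general pseudoeffective case one needs Theorem~\ref{ppbound}'s flexibility in allowing $\eta = t\xi_E$ with small unbounded locus to decouple the local pole at $x$ from the global singularity type of $V_{\mu^*\theta}$. Once this is set up, iterating the estimate over small increments of $t$ (or, equivalently, applying Theorem~\ref{ppbound} directly at the boundary $t=\zeta_x(\theta)$) closes the argument.
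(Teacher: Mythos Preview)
Your setup for Theorem~\ref{ppbound} does not produce the $t^n$ term, so the argument as written has a genuine gap. With your choices $\theta_{\mathrm{Thm}}=\mu^*\theta$, $\sigma=\mu^*\theta-tR$, $\rho=tR$, $\eta=t\xi_E$, $\xi=V_{\mu^*\theta}$, $\psi=\mu^*\vp_t$, the right-hand side of \eqref{blah} is
\[
\int_{\ti X}\langle(\mu^*\theta)^n_{V_{\mu^*\theta}}\rangle-\int_{\ti X}\langle(\mu^*\theta)^n_{\mu^*\vp_t}\rangle
=\vol(\theta)-\int_X\langle\theta^n_{\vp_t}\rangle,
\]
because the integrals in Theorem~\ref{ppbound} are \emph{non-pluripolar} masses, and the non-pluripolar product never sees a Dirac mass at $x$ (or along $E$). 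There is no ``Demailly Dirac contribution of $t^n$'' to pick up here. Meanwhile your left-hand side is (at best) $\vol_{\ti X}(\mu^*\theta-tE)-\int_X\langle\theta^n_{\vp_t}\rangle$. The two occurrences of $\int_X\langle\theta^n_{\vp_t}\rangle$ cancel, and you are left with the trivial inequality $\vol_{\ti X}(\mu^*\theta-tE)\leq\vol(\theta)$, which says nothing. Iterating in $t$ does not help: each step is vacuous.

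The paper's proof differs from yours in exactly the place where the $t^n$ must enter. One works on $X$ (not $\ti X$), fixes a K\"ahler form $\omega$ with $\omega-\theta>0$, and applies Theorem~\ref{ppbound} with the \emph{auxiliary K\"ahler class} playing the role of $\theta_{\mathrm{Thm}}$: take $\theta_{\mathrm{Thm}}=\omega$, $\sigma=\theta$, $\rho=\omega-\theta$, $\xi=\eta=0$, and $\psi=t\psi_x$, where $\psi_x\in\PSH(X,\omega)$ has an isolated log pole at $x$. The right-hand side is then
\[
\int_X\omega^n-\int_X\langle\omega^n_{t\psi_x}\rangle=t^n,
\]
by Demailly's comparison, and the left-hand side is $\vol(\theta)-\int_X\langle\theta^n_{P_\theta(t\psi_x)}\rangle$; the latter integral is identified with $\vol_{\ti X}(\mu^*\theta-tE)$ via Lemma~\ref{hoponpop} (this is the only place the blow-up appears). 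Note that $\eta=0$ here: the small-unbounded-locus flexibility you highlight is not what drives the generalization from K\"ahler to pseudoeffective $[\theta]$; rather, it is that Theorem~\ref{ppbound} allows $[\sigma]=[\theta]$ to be merely pseudoeffective.
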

\begin{proof}
Fix some K\"ahler form $\omega$ such that $\omega - \theta$ is K\"ahler. By explicit construction, for any $c > 0$ sufficiently small there exists a function $\psi_x\in\PSH(X,\omega)$, smooth on $X\setminus\{x\}$, such that:
\[
\psi_x(z) = c\log \abs{z} + h(z)
\]
in local coordinates centered at $x$, where here $h(z)$ is some smooth, bounded function. Up to scaling $\omega$, we may take $c = 1$.

By the Poincare-Lelong formula, we have that:
\begin{equation}\label{ssss}
i\p\pbar\mu^*\psi_x = \llbracket E \rrbracket - \eta,
\end{equation}
for some smooth form $\eta \in [E]$. By Theorem \ref{transfer} (really just Lemma \ref{hoponpop}), we have:
\[
P_{\mu^*\theta}(t\mu^*\psi_x) = t\mu^*\psi_x + V_{\mu^*\theta - t\eta}.
\]
Thus, the left-hand side of \eqref{MR} can be rewritten as:
\begin{align*}
\vol_{\ti{X}}(\mu^*\theta - t[E]) &= \int_{\ti{X}} \langle (\mu^*\theta - t\eta + i\p\pbar V_{\mu^*\theta - t\eta})^n\rangle = \int_{\ti{X}} \langle (\mu^*\theta + i\p\pbar P_{\mu^*\theta}(t\mu^*\psi_x))^n\rangle\\
&=\int_X \langle (\theta + i\p\pbar P_\theta(t\psi_x))^n\rangle,
\end{align*}
using standard properties of modifications. Our desired inequality \eqref{MR} can now be restated as:
\begin{equation}\label{umm}
\vol(\theta) - \int_X \langle (\theta + i\p\pbar P_\theta(t\psi_x))^n\rangle \leq t^n.
\end{equation}

We break the proof into two cases -- $t\leq 1$ and $t > 1$. In the first case, we have that $t\psi_x\in\PSH(X,\omega)$ and so we can compute:
\[
t^n = \int_X \omega^n - \int_X \langle \omega_{t\psi_x}^n\rangle,
\]
by \cite{Dem4}. Our desired inequality \eqref{umm} can thus be rewritten again as:
\[
\vol(\theta) - \int_X \langle (\theta + i\p\pbar P_\theta(t\psi_x))^n\rangle \leq \vol(\omega) - \int_X \langle \omega_{t\psi_x}^n\rangle;
\]
which is just \eqref{blah} with $\xi = \eta = 0$. Similarly, if $t > 1$, then $t\psi_x \in \PSH(X,t\omega)$ and we still have:
\[
t^n = \int_X (t\omega)^n - \int_X \langle (t\omega_{t\psi_x})^n\rangle.
\]
Hence \eqref{umm} follows again from Theorem \ref{ppbound}. 
\end{proof}


\subsection{The Argument of Witt-Nystr\"om}


We can now combine Lemma \ref{subtraction} and Theorem \ref{ppbound} to prove Theorem \ref{WN_MN}.

\begin{thm}\label{WN_MN}
Suppose that $[\alpha]$ and $[\beta]$ are big classes, and that $[\beta]$ also admits a plurisupported current. Then if $[\alpha - \beta]$ is pseudoeffective, we have:
\[
\vol(\alpha - P(\beta)) \geq \vol(\alpha) - n\langle\alpha^{n-1}\rangle\wedge P(\beta),
\]
where $\langle \alpha^{n-1}\rangle = \langle \alpha^{n-1}_{V_\alpha}\rangle$ is the moving product of \cite{BDPP}.
\end{thm}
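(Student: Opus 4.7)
The approach is to follow the scheme of Witt-Nyström \cite{WN1}, where the analogous inequality is proved when $[\beta] = c_1(D)$ for an effective divisor $D$. In that case, the divisor $D$ gives a distinguished $\log|s_D|^2$-type potential which enters into a pluripolar mass estimate; here Theorem \ref{ppbound} plays the role of that mass estimate, and the potential of a plurisupported current $T_\beta \in [\beta]$ replaces $\log|s_D|^2$. Lemma \ref{subtraction} is essential, as it lets us manipulate the singular potential of $T_\beta$ algebraically without losing positivity.

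First I fix smooth representatives $\theta \in [\alpha]$, $\beta_\text{sm} \in [\beta]$, and $\rho \in [P(\beta)]$, with $\sigma := \theta - \rho$ smooth in $[\alpha - P(\beta)]$ (pseudoeffective, since $[\alpha - \beta]$ is and $N(\beta)$ is effective). Boucksom's divisorial Zariski theory ensures $V_\rho$ has small unbounded locus, as needed for Theorem \ref{ppbound}. Write the plurisupported current as $T_\beta = \beta_\text{sm} + i\p\pbar \phi_T$ with $\phi_T$ a plurisupported $\beta_\text{sm}$-psh potential, and form the key auxiliary function $\psi := V_{\theta - \beta_\text{sm}} + \phi_T$, normalized so that $\psi \leq V_\theta$; this is $\theta$-psh by Lemma \ref{subtraction}. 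Two cohomological identifications come for free: on the one hand, expanding $\theta_\psi = (\theta - \beta_\text{sm})_{V_{\theta - \beta_\text{sm}}} + T_\beta$ and using that $\langle T_\beta^k \wedge \cdots \rangle = 0$ for $k \geq 1$ (since $T_\beta$ is plurisupported and the non-pluripolar product doesn't charge pluripolar sets) gives $\int_X \langle \theta_\psi^n\rangle = \vol(\alpha - \beta)$; on the other, since $V_\sigma + V_\rho \in \PSH(X,\theta)$ is bounded above by $V_\theta + C$, the envelope $P_\sigma(V_\theta - V_\rho)$ has minimal singularities in $\PSH(X,\sigma)$, so $\int_X \langle\sigma^n_{P_\sigma(V_\theta - V_\rho)}\rangle = \vol(\alpha - P(\beta))$.

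Applying Theorem \ref{ppbound} with $\xi = V_\theta$, $\eta = V_\rho$, and this $\psi$, and evaluating the envelope mass $\int_X \langle \sigma^n_{P_\sigma(\psi - V_\rho)}\rangle$ via Theorem \ref{transfer} with $\Sigma = \beta_\text{sm}$, $\Psi = \phi_T$, $H = V_\beta$, yields a one-step estimate bounding $\vol(\alpha) - \vol(\alpha - P(\beta))$ in terms of $\vol(\alpha - \beta)$ and a controlled correction. To pass from this one-step inequality to the desired Morse bound, I would apply the estimate not to $\beta$ directly but to $\tfrac{1}{N}\beta$ (which still admits the plurisupported current $\tfrac{1}{N}T_\beta$) and telescope over the family $\vol(\alpha - \tfrac{k}{N}P(\beta))$, $k = 0, \ldots, N$; as $N \to \infty$, each small increment is controlled infinitesimally by $\tfrac{n}{N}\langle(\alpha - \tfrac{k}{N}P(\beta))^{n-1}\rangle \cdot P(\beta) + o(1/N)$, and continuity of the non-pluripolar product in the first factor (\cite{BEGZ, WN2}) produces $n\langle\alpha^{n-1}\rangle \cdot P(\beta)$ in the limit. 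The main obstacle is the precise identification of $\int_X \langle\sigma^n_{P_\sigma(\psi - V_\rho)}\rangle$ via Theorem \ref{transfer}—this is the step where the plurisupported hypothesis on $[\beta]$ really bites, since the analogue for a general pseudoeffective class without a plurisupported representative would require controlling a non-pluripolar contribution that here vanishes. The bookkeeping in the telescoping limit also requires some care to ensure that the approximating classes $[\alpha - \tfrac{k}{N}\beta]$ remain big.
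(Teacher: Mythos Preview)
Your setup for Theorem~\ref{ppbound} is oriented the wrong way, and the resulting inequality does not give the control you need. With your choices $\theta = \alpha$, $\sigma = \alpha - P(\beta)$, $\rho = P(\beta)$, $\xi = V_\theta$, $\psi = V_{\alpha-\beta} + \phi_T$, $\eta = V_\rho$, the four masses in \eqref{blah} are $(I) = \vol(\alpha - P(\beta))$, $(II) = \int_X\langle\sigma^n_{P_\sigma(\psi - V_\rho)}\rangle$, $(III) = \vol(\alpha)$, $(IV) = \vol(\alpha - \beta)$, and the inequality reads
\[
\vol(\alpha - P(\beta)) - (II) \;\leq\; \vol(\alpha) - \vol(\alpha - \beta).
\]
This is a \emph{lower} bound on $(II)$, not an upper bound on $\vol(\alpha) - \vol(\alpha - P(\beta))$; no rearrangement extracts the latter. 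Moreover, when you unwind $(II)$ via Theorem~\ref{transfer} (with $\psi = \phi_T$, $\eta = V_\beta$ or $V_{P(\beta)}$), the resulting envelope lives in the class $[\alpha - P(\beta) - \beta]$ (equivalently $[\alpha - 2P(\beta)]$), which need not be pseudoeffective under the stated hypotheses. So the ``controlled correction'' you allude to is not actually controlled, and the telescoping step---which is otherwise a fine discretization of the calculus lemma---has nothing to telescope.

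The paper's key move is to shift \emph{upward} rather than downward: apply Theorem~\ref{ppbound} with $\theta = \alpha + \beta$, $\sigma = \alpha$, $\rho = \beta$, $\xi = V_\alpha + V_\beta$, $\psi = V_\alpha + \phi_T$, $\eta = V_\beta$. Then $(I) = \vol(\alpha)$ (since $P_\alpha(V_\alpha) = V_\alpha$), $(IV) = \vol(\alpha)$ (since $\beta_{\phi_T}$ is plurisupported, so $\langle(\alpha_{V_\alpha} + \beta_{\phi_T})^n\rangle = \langle\alpha_{V_\alpha}^n\rangle$), and $(II) = \vol(\alpha - P(\beta))$ via Theorem~\ref{transfer}. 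The inequality becomes
\[
\vol(\alpha) - \vol(\alpha - P(\beta)) \;\leq\; \int_X\langle(\alpha+\beta)^n_{V_\alpha + V_\beta}\rangle - \vol(\alpha),
\]
which \emph{is} the desired upper bound. Expanding the right side by multilinearity of the non-pluripolar product, then replacing $\beta$ by $t\beta$ (both the plurisupported current and the divisorial Zariski decomposition scale), one gets a polynomial-in-$t$ lower bound for $\vol(\alpha - tP(\beta))$; a short calculus lemma (or, equivalently, your telescoping with the correct one-step) then isolates the linear term $n\langle\alpha^{n-1}\rangle\cdot P(\beta)$.
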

\begin{proof}
Let $\psi\in\PSH(X,\beta)$ be the potential of a plurisupported current, and let $\rho_\eta = \llbracket N(\beta)\rrbracket$, the negative part of the divisorial Zariski decomposition. It is easy to see that $(\beta - \rho)_{\xi - \eta}$ is a plurisupported current in $P(\beta)$. Theorem \ref{transfer} now gives us that:
\[
P_\alpha(V_\alpha + \psi - V_\beta) = \psi - \eta + P_{\alpha - P(\beta)}(V_\alpha - V_{P(\beta)})
\]
as $V_\beta = V_{P(\beta)} + \eta$, again by Theorem \ref{transfer} (or really just Lemma \ref{hoponpop} for this one). 

Let $V_{P(\beta)} := V_{\beta - \rho}$ and $V_{\alpha - P(\beta)} := V_{\alpha - \beta + \rho}$. By definition, we have $V_{\alpha - P(\beta)} + V_{P(\beta)} \leq V_\alpha$, so:
\[
V_{\alpha - P(\beta)} \leq P_{\alpha - P(\beta)}(V_\alpha - V_{P(\beta)}),
\]
which implies that in fact:
\[
[V_{\alpha - P(\beta)}] = [P_{\alpha - P(\beta)}(V_\alpha - V_{P(\beta)})]
\]
as $V_{\alpha - P(\beta)}$ has minimal singularity type. Thus, $[P_\alpha(V_\alpha + \psi - V_\beta)] = [\psi - \eta + V_{\alpha - P(\beta)}]$.

Now by Theorem \ref{ppbound} with $\theta = \alpha + \beta$, $\sigma = \alpha$, $\rho = \beta$, and $\xi = V_\alpha + V_\beta$, $\psi\ \text{(in Theorem \ref{ppbound})} = V_\alpha + \psi\ \text{(from the start of the proof)}$, $\eta = V_\beta$ we have:
\[
\vol(\alpha) - \int_X \langle \alpha^n_{P_\alpha(V_\alpha + \psi - V_\beta)}\rangle \leq \int_X \langle (\alpha + \beta)_{V_\alpha + V_\beta}^n\rangle - \vol(\alpha),
\]
as $\langle (\alpha_{V_\alpha} + \beta_\psi)^n\rangle = \langle \alpha_{V_\alpha}^n\rangle = \vol(\alpha)$ (by \cite{BEGZ}), and $P_\alpha(V_\alpha) = V_\alpha$. Rearranging this using multilinearity of the non-pluripolar product gives us:
\[
\int_X \langle \alpha^n\rangle - \sum_{k=1}^n {n \choose k} \int_X \langle \alpha^{n-k} \wedge P(\beta)^k \rangle \leq \int_X \langle \alpha^n_{P_\alpha(V_\alpha + \psi - V_\beta)}\rangle,
\]
as $\langle \alpha^{n-k}\wedge P(\beta)^k\rangle = \langle \alpha_{V_\alpha}^{n-k}\wedge \beta^{k}_{V_\beta}\rangle$ by \cite{BEGZ}. By the first paragraph, $[P_\alpha(V_\alpha + \psi - V_\beta)] = [\psi - \eta + V_{\alpha - P(\beta)}]$, so by \cite{BEGZ} we have:
\begin{equation}\label{okie}
\int_X \langle \alpha^n\rangle - \sum_{k=1}^n {n \choose k} \int_X \langle \alpha^{n-k} \wedge P(\beta)^k \rangle \leq \int_X \langle \alpha^n_{\psi - \eta + V_{\alpha - P(\beta)}}\rangle = \int_X \vol(\alpha - P(\beta)).
\end{equation}
In order to conclude, replace $\beta$ by $t\beta$ -- while the moving product is not additive, it is multiplicative (as $V_{t\beta} = tV_\beta$ for $0 < t \leq 1$), and so is the divisorial Zariski decomposition, so \eqref{okie} becomes:
\[
\int_X \langle \alpha^n\rangle - \sum_{k=1}^n t^k{n \choose k} \int_X \langle \alpha^{n-k} \wedge P(\beta)^k \rangle \leq \int_X \vol(\alpha - tP(\beta)).
\]
We can now conclude by using a basic calculus lemma, which can be found in the first version of \cite{WN1} on the arXiv.
\end{proof}


\subsection{The Conjecture of Boucksom--Demailly--Pa\u un--Peternell}


We finally show that a slight modification of the proof of Theorem A.2 in the appendix of \cite{WN1} can be used to show that the BDPP conjecture holds if $X$ admits a plurisupported K\"ahler class.

\begin{thm}\label{crank}
 Let $Y$ be a compact K\"ahler manifold, and suppose that $\mu: X\rightarrow Y$ is a proper modification such that $X$ admits a plurisupported current $T$ with $[T]$ a K\"ahler class. Then the BDPP conjecture holds on $Y$.
\end{thm}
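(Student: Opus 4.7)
The plan is to adapt the argument in the appendix of \cite{WN1} to the K\"ahler setting, using Theorem \ref{WN_MN} in place of \cite[Thm.~1.2]{WN1}; the plurisupported K\"ahler class $[T]$ on $X$ takes the role played by an ample divisor in the projective situation.

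First I would reduce to BDPP on $X$. Since $\mu\colon X \to Y$ is a proper modification, pseudoeffectivity and movability transport in a compatible way under $\mu^*$ and $\mu_*$, so the asymptotic orthogonality statement on $X$ descends to $Y$ (c.f.\ \cite{BDPP}). Thus it suffices to prove BDPP on $X$, which we reformulate (as in the appendix of \cite{WN1} and in \cite{BDPP}) as the asymptotic orthogonality of approximate Zariski decompositions, or equivalently the vanishing $\langle \alpha^{n-1}\rangle \cdot N(\alpha) = 0$ for every big class $[\alpha]$. By linearity this reduces to showing $\langle \alpha^{n-1}\rangle \cdot [D] = 0$ for each prime divisor $D$ in the support of $N(\alpha)$.

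Fix such $\alpha$ and $D$, and let $a_D > 0$ denote the multiplicity of $D$ in $N(\alpha)$. For small parameters $t, \epsilon > 0$, I would apply Theorem \ref{WN_MN} to $[\alpha]$ with the auxiliary class $\beta_{t,\epsilon} := t[D] + t\epsilon[T]$. This class is big (since $t\epsilon[T]$ is K\"ahler), admits the plurisupported current $t\llbracket D\rrbracket + t\epsilon T$, and $[\alpha - \beta_{t,\epsilon}]$ is pseudoeffective for $t$ sufficiently small. Theorem \ref{WN_MN} then yields
\[
\vol\bigl(\alpha - P(\beta_{t,\epsilon})\bigr) \geq \vol(\alpha) - n\,\langle \alpha^{n-1}\rangle \cdot P(\beta_{t,\epsilon}).
\]
As $\epsilon \to 0$, $P(\beta_{t,\epsilon})$ should converge in $H^{1,1}$ to $t[D]$ up to a negligible correction (since $[T]$ is nef, $t\epsilon[T]$ only contributes to the positive part). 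Combined with $\vol(\alpha - tD) = \vol(\alpha)$ for $t \in [0, a_D]$ (a standard property of the divisorial Zariski decomposition, \cite{Bo3}), passing to the limit and then differentiating in $t$ at $0$ forces $\langle \alpha^{n-1}\rangle \cdot [D] \leq 0$; the reverse inequality is immediate from the positivity of the movable product against effective divisors, so we conclude the desired vanishing.

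The main obstacle is controlling the behaviour of $P(\beta_{t,\epsilon})$ as $\epsilon \to 0$ -- in particular, showing that any contribution of $t\epsilon[T]$ to the negative part of $\beta_{t,\epsilon}$ vanishes in the limit -- and interchanging this limit with the volume and intersection computations to extract the sharp linearized inequality. The plurisupported K\"ahler class $[T]$ is critical here: without such a class, the bigness hypothesis in Theorem \ref{WN_MN} could not be arranged along a family of auxiliary classes shrinking to the fixed divisor class $[D]$, and the K\"ahler-case analogue of the WN1 argument would break down exactly at the point where \cite{WN1} invokes an ample perturbation.
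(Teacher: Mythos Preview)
Your reformulation of BDPP is incorrect, and this is where the argument breaks. Asymptotic orthogonality of approximate Zariski decompositions is \emph{not} equivalent to $\langle \alpha^{n-1}\rangle \cdot N(\alpha) = 0$ for all big $\alpha$. The quantity that must vanish is $\langle \alpha^{n-1}\rangle\cdot\alpha - \vol(\alpha) = \lim_k \int_{X_k}\alpha_k^{n-1}\wedge\llbracket E_k\rrbracket$, where $\pi_k^*\alpha = \alpha_k + E_k$ on a tower of modifications $X_k\to X$ with $\alpha_k$ K\"ahler; the effective divisors $E_k$ live on $X_k$ and are in general much larger than the pullback of $N(\alpha)$. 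When $\alpha$ is already modified nef one has $N(\alpha)=0$ and your condition is vacuous, yet the identity $\langle\alpha^{n-1}\rangle\cdot\alpha = \vol(\alpha)$ is precisely what is in question. So the reduction to prime divisors in $N(\alpha)$ collapses before any analysis begins. Even setting this aside, your limit in $\epsilon$ goes the wrong way: a prime component $D$ of $N(\alpha)$ is exceptional in Boucksom's sense, so $P(t[D])=0$ and $N(t[D])=t[D]$; hence $P(t[D]+t\epsilon[T])$ tends toward $0$, not toward $t[D]$, and the inequality from Theorem~\ref{WN_MN} degenerates to $\vol(\alpha)\geq\vol(\alpha)$.

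The paper avoids both issues by working directly with approximate Zariski decompositions $\pi_k\colon X_k\to X$, $\mu_k^*\alpha = \alpha_k + E_k$, and applying Theorem~\ref{WN_MN} on each $X_k$ to the \emph{nef} class $[\alpha_k + t(\pi_k^*T + \llbracket E_k\rrbracket)]$ with $\beta = t\,\pi_k^*T$. Since $[\pi_k^*T]$ is nef one has $P(\beta)=\beta$ and no $\epsilon$-limit is needed; since $\pi_k^*T$ is plurisupported and $[\pi_k^*T]$ is big, the hypotheses of Theorem~\ref{WN_MN} are met uniformly along the tower. Expanding the resulting inequality, bounding the cross-terms uniformly in $k$ via Lemma~\ref{kkforms} (this is where one uses that $[T\pm\mu^*\alpha]$ can be made K\"ahler), and optimising in $t$ yields $\bigl(\int_{X_k}\alpha_k^{n-1}\wedge\llbracket E_k\rrbracket\bigr)^2 \leq C\bigl(\vol(\alpha)-\int_{X_k}\alpha_k^n\bigr)\to 0$, which is exactly the asymptotic orthogonality statement that \cite[Thm.~A.3]{WN1} turns into BDPP.
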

\begin{proof}
Suppose that $[\alpha]\in H^{1,1}(Y, \R)$ is big. Consider a sequence of approximate Zariski decompositions of $[\mu^*\alpha]$:
\[
\pi_k: X_k \rightarrow X,\ \ \ [\mu_k^*\alpha] = [\alpha_k] + [ E_{k}],
\]
where here $\mu_k := \mu\circ \pi_k$, the $\alpha_k$ are K\"ahler forms, and the $E_{k}$ are effective, exceptional $\Q$-divisors. By \cite[Thrm. A.3]{WN1}, it will be enough to show that:
\[
\lim_{k\rightarrow\infty} \int_{X_k}\alpha_k^{n-1}\wedge\llbracket E_k\rrbracket = \langle \alpha^{n-1}\rangle \cdot \alpha - \vol(\alpha)
\]
is equal to 0.

Scale $T$ as necessary so that the classes $[T \pm \mu^*\alpha]$ are K\"ahler, and choose a smooth K\"ahler form $\omega\in[T]$. Then the line segment $[\alpha_k + t(\pi^*_k T + \llbracket E_k\rrbracket)]$, $t\in [0, 1]$, has endpoints in the nef cone, and so by convexity is entirely contained inside the nef cone. Applying Theorem \ref{WN_MN} with $[\beta] = [t \pi_*T]$ gives: 
\begin{align*}
\vol(\alpha) &\geq \vol(\alpha_k + t\llbracket E_k\rrbracket)\\
&\geq \int_{X_k} (\alpha_k + t(\pi_k^*\omega + R_k))^n - nt\int_{X_k} (\alpha_k + t(\pi^*_k \omega + R_k))^{n-1}\wedge \pi_k^*\omega \tag{\theequation}\label{piquwrteqwerrt}\end{align*}
where here $R_k$ is a smooth form in $[E_k]$.

We wish to expand the integrals on the right-hand side of \eqref{piquwrteqwerrt}. Let $0 < \e < 1$. Note that:
\[
[\pi^*_k \omega + 2\alpha_k - R_k] = [\pi^*_k(\omega - \mu^*\alpha) + 3\alpha_k] \text{ and } [\pi^*_k \omega + 2\alpha_k + R_k] = [\pi^*_k (\omega + \mu^*\alpha) + \alpha_k],
\]
are both K\"ahler classes, and so by Lemma \ref{kkforms}, we have:
\begin{align*}
\abs{\int_{X_k} \alpha_k^i\wedge (\pi_k^*\omega + \e\alpha_k)^j \wedge R_k^\ell} &\leq C(n)\int_{X_k} \alpha_k^i\wedge (\pi_k^* \omega + \e\alpha_k)^{j} \wedge(2\alpha_k + \pi^*_k \omega)^\ell\\
&\leq C(n, i, j, \ell) \sum_{r=0}^n \int_{X_k} \alpha_k^r\wedge \pi^*_k\omega^{n-r} 
\end{align*}
Both $\alpha_k$ and $\pi^*_k\omega$ are semi-positive, and so since $[\pi^*_k T - \alpha_k]$ is pseudoeffective we have:
\[
\int_{X_k} \alpha_k^n \leq \int_{X_k} \pi^*_k\omega\wedge\alpha_k^{n-1} \leq \ldots \leq \int_{X_k} \pi_k^*\omega^n = \int_X \omega^n.
\]
Letting $\e\rightarrow 0$ now shows that each of the terms in the expansion of \eqref{piquwrteqwerrt} is bounded independent of $k$:
\[
\abs{\int_{X_k} \alpha_k^i\wedge \pi_k^*\omega^j \wedge R_k^\ell} \leq C
\]

We can now expand the terms on the right-hand side of \eqref{piquwrteqwerrt}:
\begin{align*}
\vol(\alpha) - \vol(\alpha_k) &\geq nt\int_{X_k} (\pi_k^*\omega + R_k)\wedge\alpha_k^{n-1} - nt\int_{X_k} \alpha_k^{n-1}\wedge \pi_k^*\omega - C t^2\\
& = nt \int_{X_k} \alpha_k^{n-1}\wedge \llbracket E_k\rrbracket - Ct^2
\end{align*}
Up to possibly increasing $C$, we can assume that $\int_{X_k} \alpha_k^{n-1}\wedge\llbracket E_k\rrbracket \leq \frac{2C}{n}$, and so setting $t := \frac{n}{2C}\int_{X_k} \alpha_k^{n-1}\wedge\llbracket E_k\rrbracket$ gives:
\[
\left(\int_{X_k} \alpha_k^{n-1}\wedge\llbracket E_k\rrbracket\right)^2 \leq \frac{4C}{n^2} \left(\vol(\alpha) - \int_{X_k} \alpha_k^n\right),
\]
which tends to 0 as $k\rightarrow\infty$, since $\alpha_k$ is an approximate Zariski decomposition for $\mu^*\alpha$ and $\vol(\mu^*\alpha) = \vol(\alpha)$.

\end{proof}


\section{Existence of Plurisupported Currents}


In this section, we attempt to study the existence of plurisupported currents by studying the cone of plurisupported currents in $\mathcal{E}\in H^{1,1}(X,\R)$, the pseduoeffective cone. We show that any big class can be naturally decomposed (in a non-unique way) into a sum of a plurisupported class and a ``deficient" class in Corollary \ref{decomp}. We deduce that such deficient classes present a type of obstruction to the existence of plurisupported classes, but this obstruction is seen to be insufficient to completely determine their existence. We conclude by discussing a starting point for a possible alternate approach.


\subsection{The Singular Decomposition}


We denote by $\mathcal E$ and $\mathcal M$ the pseudoeffective and modified nef cones \cite{Bo3} in $H^{1,1}(X,\R)$, respectively. 

\begin{defn}
We say that a class $[\theta]\in\mathcal{E}$ is {\bf plurisupported} if there exists a plurisupported $T\in[\theta]$ 
\end{defn}

If $[\theta]$ is plurisupported, then there will generally be many plurisupported representatives of $[\theta]$, although this is not always the case (e.g. if $[\theta] = c_1(E)$ for an exceptional divisor $E$, in the sense of \cite{Bo3}).

The goal of this subsection will be to show that, on most manifolds, every big $(1,1)$-class admits at least one decomposition into a plurisupported singular class and what we shall call a ``deficient class," which, as the name suggests, will be a ``deficient" in having singular currents (Corollary \ref{decomp}).

We make the following definitions:

\begin{defn}
We write $\mathcal{S}$ for the cone of plurisupported classes. By linearity of the non-pluripolar product, it is a convex cone.
\end{defn}

\begin{rmk}
It is easy to see that $\mathcal S$ contains the cone of effective divisors. As mentioned in the introduction (Question \ref{ookook}) it is unclear if this is all of $\mathcal S$, though we suspect it is not. It is also unclear currently if $\mathcal{S}$  is closed (or open, for that matter). We expect that it is neither, based on the example of the cone of effective divisors.
\end{rmk}

\begin{defn}
We say that a pseudoeffective class $[\alpha]$ is {\bf deficient} if $\langle T\rangle = T$ for all closed positive currents $T\in[\alpha]$. We say that $[\alpha]$ is a {\bf non-trivial} deficient class if $[\alpha]\not= 0$.

We write $\mathcal{A}$ for the cone of all deficient classes.  It is easy to see that $\mathcal{A}\subseteq\mathcal{M}$ (Proposition \ref{prim}).
\end{defn}

As with $\mathcal{S}$, we do not currently know if $\mathcal{A}$ is closed or not (we will see shortly that it is never open).

\begin{exmp}
Many examples of deficient classes can be constructed using dynamical methods to construct rigid classes, which only have a single closed positive current, and then showing that this current has a bounded potential, so that it cannot charge pluripolar subsets \cite{BT2}.

In \cite{Cantat}, Cantat constructs deficient classes on K3 surfaces by examining eigenclasses of automorphisms with positive entroy; he shows that such classes are rigid and that the unique positive current admits continuous potentials. It was later shown that such deficient classes exist in profusion on many K3's, by work of Sibony-Verbitsky \cite{SibVerb} (who show that irrational classes on the boundary of the big cone are rigid) and Filip-Tosatti \cite{FT2} (who show the currents in these classes have continuous potentials).

A similar proof to \cite{Cantat} should also work more generally on hyperkahler manifolds, producing similar examples in higher dimension. The results in \cite{SibVerb} also hold for hyperkahler manifolds, but it does not appear at all trivial to extend the regularity result of \cite{FT2} to hyperkahlers.

There are also other examples coming from foliations of manifolds, see e.g. \cite[Rmk. 3.5]{FT1}.
\end{exmp}

There are two take-aways from the above examples. The first is that, while deficient classes can be viewed as an obstruction to the existence of plurisupported currents (as we will see shortly -- Corollary \ref{decomp}), their existence does not actually preclude the existence of plurisupported currents, as all the examples are projective. Thus, we see right off the bat that deficient classes are an imperfect obstruction at best.

The second take-away is that the classes mentioned are all significantly nicer than one might expect a ``general" deficient class to be; in particular, they are nef and have bounded potential. This prompts the following question:

\begin{quest}\label{failure}
Are deficient classes actually nef? Can one construct a deficient class $[\alpha]$ such that $V_\alpha$ is unbounded?
\end{quest}

The provided examples are all on hyperkahler manifolds or surfaces, which are generally significantly nicer than arbitrary K\"ahler manifolds; in particular, classical Zariski decompositions always exist for such manifolds, which is not true in general. Since the splitting of a class into a plurisupported and deficient class is in some sense ``reverse" to the Zariski decomposition, one should thus probably expect the answer to Question \ref{failure} to be negative in general.

We continue our discussion of deficient classes by showing that they are usually not big.

\begin{prop}\label{prim}
The cone $\mathcal A$ is a union of extremal faces of $\mathcal E$ and $\mathcal M$. In particular, either $\mathcal A = \mathcal E = \mathcal M$ or $\mathcal A\subseteq \partial \mathcal E\cap\partial \mathcal M$.
\end{prop}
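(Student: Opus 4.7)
The plan is to establish that $\mathcal{A}$ has the extremal face property inside both $\mathcal{E}$ and $\mathcal{M}$, and then extract the stated dichotomy from the convex geometry of a cone with nonempty interior. The key ingredient is the additivity of the pluripolar Siu decomposition at the level of currents: for closed positive $(1,1)$-currents $S, T$, one has $\llbracket S+T\rrbracket = \llbracket S\rrbracket + \llbracket T\rrbracket$. This is the current-level counterpart of Proposition \ref{sing_add} and may be verified directly: writing $S = \theta_1 + i\partial\bar\partial\vp_1$ and $T = \theta_2 + i\partial\bar\partial\vp_2$, the identity $\{\vp_1 + \vp_2 = -\infty\} = \{\vp_1 = -\infty\} \cup \{\vp_2 = -\infty\}$ reduces the verification to the fact (from \cite{BEGZ}) that the non-pluripolar part $\langle S\rangle$ does not charge pluripolar sets, so the cross-terms $\chi_{\{\vp_2 = -\infty\}}S$ and $\chi_{\{\vp_1 = -\infty\}}T$ collapse to $\chi_{\{\vp_2 = -\infty\}}\llbracket S\rrbracket$ and $\chi_{\{\vp_1 = -\infty\}}\llbracket T\rrbracket$, which recombine into $\llbracket S\rrbracket + \llbracket T\rrbracket$.

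Given additivity, the extremal face property is immediate. Suppose $[\alpha] + [\beta] \in \mathcal{A}$ with $[\alpha], [\beta] \in \mathcal{E}$. Taking any closed positive $S \in [\alpha]$ and any closed positive $R \in [\beta]$ (which exist by pseudoeffectivity), we have $S + R \in [\alpha + \beta] \in \mathcal{A}$, so $0 = \llbracket S+R\rrbracket = \llbracket S\rrbracket + \llbracket R\rrbracket$, forcing $\llbracket S\rrbracket = \llbracket R\rrbracket = 0$; since $S$ was arbitrary, $[\alpha] \in \mathcal{A}$, and symmetrically $[\beta] \in \mathcal{A}$. The identical proof, restricted to classes in $\mathcal{M}$, gives the extremal face property for $\mathcal{M}$, provided $\mathcal{A} \subseteq \mathcal{M}$. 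This containment follows from Lemma \ref{mnef} for big deficient classes (which forces $N(\sigma) = \llbracket V_\sigma\rrbracket = 0$); for non-big $[\sigma] \in \mathcal{A}$, I would perturb to $[\sigma + \e\omega]$ for a K\"ahler form $\omega$ and note that by Proposition \ref{sing_ordering}, $\llbracket V_{\sigma + \e\omega}\rrbracket \leq \llbracket V_\sigma\rrbracket = 0$, so $[\sigma + \e\omega]$ is big and modified nef, and $[\sigma] = \lim_{\e \to 0^+}[\sigma + \e\omega] \in \mathcal{M}$ by closedness of $\mathcal{M}$.

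For the dichotomy: if $\mathcal{A}$ contains a big class $[\alpha_0]$, then for any $[\beta] \in \mathcal{E}$ there is some $\e > 0$ with $[\alpha_0] - \e[\beta] \in \mathcal{E}$; writing $[\alpha_0] = ([\alpha_0] - \e[\beta]) + \e[\beta]$ and applying the extremal face property forces $\e[\beta] \in \mathcal{A}$, so $[\beta] \in \mathcal{A}$. Hence $\mathcal{A} = \mathcal{E}$, and combining with $\mathcal{A} \subseteq \mathcal{M} \subseteq \mathcal{E}$ gives $\mathcal{A} = \mathcal{E} = \mathcal{M}$. Otherwise $\mathcal{A}$ avoids $\mathrm{int}(\mathcal{E})$ entirely; since $\mathrm{int}(\mathcal{E}) = \mathrm{int}(\mathcal{M})$ (both equal the big cone), $\mathcal{A}$ also avoids $\mathrm{int}(\mathcal{M})$, yielding $\mathcal{A} \subseteq \partial\mathcal{E} \cap \partial\mathcal{M}$. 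The main obstacle is the additivity lemma at the current level; once that is in hand (and the containment $\mathcal{A} \subseteq \mathcal{M}$ is checked via the perturbation trick above), the remaining steps are standard convex-geometric consequences of extremality together with the lemmas established in Section 2.
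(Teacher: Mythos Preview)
Your proof is correct and follows the same overall architecture as the paper: additivity of $\llbracket\cdot\rrbracket$ gives the extremal face property in $\mathcal{E}$, the containment $\mathcal{A}\subseteq\mathcal{M}$ is then established, and the dichotomy is read off from convex geometry. The one substantive difference is how you verify $\mathcal{A}\subseteq\mathcal{M}$: the paper argues directly that $\llbracket V_\alpha\rrbracket = 0$ forces all generic Lelong numbers $\nu(V_\alpha, D)$ to vanish (via the ordinary Siu decomposition), hence $\nu(\alpha,D)=0$ for every divisor $D$ by \cite[Prop.~3.6]{Bo3}; this handles all pseudoeffective deficient classes at once without a big/non-big split. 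Your route via Lemma~\ref{mnef} plus the perturbation $[\sigma+\e\omega]$ and closedness of $\mathcal{M}$ also works, but is slightly less direct and leans on more machinery (the structure of $E_{nK}$ in the big case, and the closedness of $\mathcal{M}$).
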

\begin{proof}
In order for $\mathcal A$ to be an union of extremal faces of $\mathcal E$, we need to check that if $[\alpha],[\beta]\in\mathcal E$ and $[\alpha + \beta] \in\mathcal A$, then $[\alpha],[\beta]\in\mathcal A$. This is easy to see -- let $T\in[\alpha]$ and $S\in[\beta]$ be closed positive currents. Then $[T + S]\in[\alpha + \beta]$, so by assumption:
\[
\llbracket T + S\rrbracket = \llbracket T\rrbracket + \llbracket S \rrbracket = 0.
\] 
Since $\llbracket T\rrbracket$ and $\llbracket S\rrbracket$ are both positive, they must both be zero.

We now check that $\mathcal A\subseteq \mathcal M$. If $[\alpha]$ is deficient, then:
\[
\llbracket V_\alpha \rrbracket = 0.
\]
Hence, we must have that $\nu(V_\alpha, D)$, the generic Lelong number of $V_\alpha$ along the divisor $D$, must be 0 for every divisor $D\subset X$, by general properties of the (usual) Siu decomposition. By \cite[Prop. 3.6]{Bo3}:
\[
0 \leq \nu(\alpha, D) \leq \nu(V_\alpha, D) = 0\ \ \text{ for all } D,
\]
so that by $[\alpha]\in\mathcal M$ by definition, where $\nu(\alpha, D)$ is the minimal multiplicity of $[\alpha]$ along $D$, as defined by Boucksom \cite{Bo3}.

Finally, since $\mathcal E$ and $\mathcal M$ are both convex cones with non-empty interior, it follows from the definition of an extremal face that either $\mathcal A = \mathcal E = \mathcal M$ or $\mathcal A\subseteq \p\mathcal E\cap\p\mathcal M$.
\end{proof}

\begin{cor}
If $X$ does not admit any plurisupported currents, then every pseudoeffective class on $X$ is primitive. Otherwise, $\mathcal A\subseteq \p\mathcal E\cap\p\mathcal M$. \hfill $\Box$
\end{cor}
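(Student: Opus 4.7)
The plan is to deduce both statements directly from the dichotomy already established in Proposition \ref{prim}, which tells us that either $\mathcal{A} = \mathcal{E} = \mathcal{M}$ or $\mathcal{A}\subseteq \partial \mathcal{E}\cap\partial \mathcal{M}$; the only work left is to match each alternative to the corresponding hypothesis on plurisupported currents.

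First, suppose that $X$ admits no (non-zero) plurisupported currents. Then for every closed positive $(1,1)$-current $T$, the plurisupported component $\llbracket T\rrbracket$ of its pluripolar Siu decomposition is itself a plurisupported current on $X$, and so must vanish. Hence $T = \langle T\rangle$ for every such $T$, which by definition means every pseudoeffective class is deficient; that is, $\mathcal{A} = \mathcal{E}$. Since $\mathcal{E}\supseteq\mathcal{M}\supseteq\mathcal{A}$ by Proposition \ref{prim}, this forces $\mathcal{A} = \mathcal{E} = \mathcal{M}$, i.e.\ every pseudoeffective class is primitive.

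Conversely, if there exists a non-zero plurisupported current $T$ on $X$, then $[T]\in \mathcal{E}$ and $\llbracket T\rrbracket = T\neq 0$, so $[T]\notin \mathcal{A}$. Thus $\mathcal{A}\subsetneq \mathcal{E}$, which rules out the first alternative in Proposition \ref{prim} and forces $\mathcal{A}\subseteq \partial \mathcal{E}\cap\partial \mathcal{M}$.

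There is no real obstacle here: everything of substance, namely that $\mathcal{A}$ is an extremal face of both $\mathcal{E}$ and $\mathcal{M}$, is already packaged into Proposition \ref{prim}, and the corollary amounts to observing which case of that dichotomy is excluded by each hypothesis on plurisupported currents.
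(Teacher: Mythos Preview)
Your argument is correct and is exactly the unpacking the paper intends: the corollary is marked with $\Box$ precisely because both cases are immediate from Proposition \ref{prim} once one observes that the absence of plurisupported currents forces $\llbracket T\rrbracket=0$ for every $T$ (hence $\mathcal A=\mathcal E$), while the existence of one gives a class in $\mathcal E\setminus\mathcal A$. There is nothing to add.
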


From now on, we shall suppose that $[\theta]$ is big. To determine if $[\theta]$ admits a plurisupported current, we investigate the set:
\[
A_\theta := \{ [\langle T\rangle] \in H^{1,1}(X, \R)\ |\ T\in [\theta]\}.
\]
It is easy to see that $A$ will be a convex subset of $(\langle\theta\rangle - \mathcal S)\cap \mathcal{M}$, where we recall that
\[
\langle \theta\rangle = P(\theta) = [\langle T_{\mathrm{min}}\rangle],
\]
for any closed positive current with minimal singularities $T_{\min}\in[\theta]$, since $[\theta]$ is big. Note that in general $A_\theta$ will be neither open nor closed. 

\begin{prop}\label{Frasier}
We have:
\[
A_\theta \supseteq \left((\langle\theta\rangle - \mathcal S)\cap \mathcal M\cap\mathcal E^\circ\right) \cup \left((\langle \theta\rangle - \mathcal S)\cap \mathcal A\right),
\]
for any big class $[\theta]$ on $X$.
\end{prop}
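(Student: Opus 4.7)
The plan is to produce, for any $[\alpha]$ in the right-hand side, an explicit closed positive current $T \in [\theta]$ with $[\langle T\rangle] = [\alpha]$. Since $[\theta]$ is big, Boucksom's divisorial Zariski decomposition gives $[\theta] = \langle\theta\rangle + [N(\theta)]$, where $N(\theta)$ is an effective $\R$-divisor. Set $[\beta] := \langle\theta\rangle - [\alpha]$; by hypothesis $[\beta]\in\mathcal{S}$, so choose a plurisupported current $S\in[\beta]$. Let $N$ denote the current of integration along $N(\theta)$ (also plurisupported, being divisorial). For any closed positive $R\in[\alpha]$, the current $T := R + S + N$ is closed and positive, with $[T] = [\alpha] + [\beta] + [N(\theta)] = \langle\theta\rangle + [N(\theta)] = [\theta]$.

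The entire argument reduces to choosing $R$ such that $\llbracket R\rrbracket = 0$. Granting this, the currentwise analogue of Proposition \ref{sing_add} yields
\[
\llbracket T\rrbracket = \llbracket R\rrbracket + \llbracket S\rrbracket + \llbracket N\rrbracket = S + N,
\]
using that $\llbracket S\rrbracket = S$ and $\llbracket N\rrbracket = N$ since $S$ and $N$ are plurisupported. Thus $\langle T\rangle = T - \llbracket T\rrbracket = R$, and $[\langle T\rangle] = [\alpha]$, as desired.

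It remains to produce $R$ in each of the two cases. If $[\alpha]\in\mathcal{M}\cap\mathcal{E}^\circ$, then $[\alpha]$ is big and modified nef, so $N(\alpha) = 0$ by definition of the modified nef cone. Lemma \ref{mnef} then gives $\llbracket V_\alpha\rrbracket = N(\alpha) = 0$, so we may take $R := \alpha + i\p\pbar V_\alpha$. If $[\alpha]\in\mathcal{A}$, then by definition of a deficient class every closed positive current in $[\alpha]$ is its own regular part, so any such $R$ works; in particular $R := \alpha + i\p\pbar V_\alpha$ suffices, and exists since $\mathcal{A}\subseteq \mathcal{E}$.

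The one genuine technical point I anticipate is verifying the currentwise additivity of $\llbracket\cdot\rrbracket$, which was only stated at the level of singularity types in Proposition \ref{sing_add}. This follows from that proposition together with pluri-locality of the non-pluripolar product: writing $T_i = \theta_i + i\p\pbar\varphi_i$, one has $\{\varphi_1+\varphi_2 = -\infty\} = \{\varphi_1 = -\infty\}\cup\{\varphi_2 = -\infty\}$, and $\chi_{\{\varphi_2=-\infty\}\cap\{\varphi_1>-\infty\}} T_1 = 0$ because $\chi_{\{\varphi_1>-\infty\}} T_1 = \langle T_1\rangle$ does not charge pluripolar sets, whence $\llbracket T_1 + T_2\rrbracket = \llbracket T_1\rrbracket + \llbracket T_2\rrbracket$. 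Everything else is formal once this is in hand.
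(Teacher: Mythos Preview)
Your proof is correct and follows essentially the same route as the paper. Both arguments reduce to producing, for a given $[\alpha]$ in the right-hand side, a closed positive current $R\in[\alpha]$ with $\llbracket R\rrbracket=0$, and both take $R=\alpha_{V_\alpha}$ in the big modified-nef case (you invoke Lemma~\ref{mnef} directly, while the paper unpacks the same fact via $\mathcal I$-model-ness and the Siu decomposition). Your write-up is in fact slightly more careful than the paper's in two places: you explicitly add the current $N=\llbracket N(\theta)\rrbracket$ so that the resulting current lands in $[\theta]$ rather than $\langle\theta\rangle$, and you verify the currentwise additivity $\llbracket T_1+T_2\rrbracket=\llbracket T_1\rrbracket+\llbracket T_2\rrbracket$ directly, which the paper leaves implicit.
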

\begin{proof}
If $X$ admits no plurisupported currents, then this is obvious. So suppose that $T$ is a plurisupported current on $X$ and let $[\gamma] := \langle\theta\rangle - [T]$. We will show that $[\gamma]$ admits a current $S\in[\gamma]$ with $S = \langle S \rangle$ if either $[\gamma]\in \mathcal{A}$ or if $[\gamma]\in\mathcal{M}\cap\mathcal{E}^\circ$; assuming this, we will be done, as $S + T$ will be a closed positive current in $\langle\theta\rangle$ with $[\langle S + T\rangle] = [S] = [\gamma]$.

The first case is obvious, so suppose that $[\gamma] \in \mathcal M\cap \mathcal E^\circ$. As mentioned right before Proposition \ref{I-sings}, $V_\gamma$ will always be $\mathcal{I}$-model, and hence $\llbracket V_\gamma\rrbracket$ will be a sum of divisors; let $S := \gamma_{V_\gamma}$. As a standard consequence of the Siu decomposition, we have that:
\[
\llbracket S\rrbracket = \sum \nu(V_\gamma, D_i) \llbracket D_i\rrbracket;
\]
since $[\gamma]$ is big and modified nef, by \cite[Prop. 3.6]{Bo3} $\nu(V_\gamma, D_i) = 0$ though, so we have $\llbracket S\rrbracket = 0$, or equivalently $\langle S \rangle = S$.
\end{proof}

Recall that the relative interior of a convex set $B$ is defined to be:
\[
B^\circ = \{\lambda x + (1-\lambda)y\ |\ \lambda\in(0,1), x,y\in B\}.
\]

\begin{cor}
We have:
\[
A_\theta^\circ = (\langle\theta\rangle - \mathcal S^\circ)\cap \mathcal M^\circ.
\]
\end{cor}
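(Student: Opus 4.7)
The plan is to combine Proposition \ref{Frasier} with a standard convex-geometric argument, using that $A_\theta$ is sandwiched between the purported relative interior and its ``closure.'' First I would verify that $A_\theta$ is convex. Given closed positive $T_1, T_2\in[\theta]$ with potentials $\vp_1, \vp_2$ and $\lambda\in[0,1]$, set $T := \lambda T_1 + (1-\lambda)T_2$, so $\{T = -\infty\} = \{\vp_1 = -\infty\}\cup\{\vp_2 = -\infty\}$. Using that $\langle T_i\rangle$ charges no pluripolar set, a short computation yields
\[
\langle T\rangle = \chi_{\{\vp_1 > -\infty\}\cap\{\vp_2 > -\infty\}}\bigl(\lambda T_1 + (1-\lambda)T_2\bigr) = \lambda\langle T_1\rangle + (1-\lambda)\langle T_2\rangle,
\]
and passing to cohomology gives convexity of $A_\theta$.

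Next I would establish the sandwich
\[
(\langle\theta\rangle - \mathcal{S}^\circ)\cap \mathcal{M}^\circ \;\subseteq\; A_\theta \;\subseteq\; (\langle\theta\rangle - \mathcal{S})\cap \mathcal{M}.
\]
For the right-hand inclusion, the identity $[\theta] = \langle\theta\rangle + N(\theta)$ combined with $T = \langle T\rangle + \llbracket T\rrbracket$ yields $[\langle T\rangle] = \langle\theta\rangle - ([\llbracket T\rrbracket] - N(\theta))$. Since $V_\theta$ has minimal singularity type, Proposition \ref{sing_ordering} gives $\llbracket T\rrbracket \geq \llbracket V_\theta\rrbracket = N(\theta)$ (via Lemma \ref{mnef}), and Corollary \ref{ordering} then produces a plurisupported representative of $[\llbracket T\rrbracket] - N(\theta)$, showing $[\llbracket T\rrbracket] - N(\theta)\in\mathcal{S}$. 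That $[\langle T\rangle]\in\mathcal{M}$ follows exactly as in Proposition \ref{prim}, using $\llbracket\langle T\rangle\rrbracket = 0$ from Proposition \ref{projection} together with Lemma \ref{mnef}. The left-hand inclusion is immediate from Proposition \ref{Frasier} once we observe $\mathcal{M}^\circ\subseteq \mathcal{E}^\circ$ (a small perturbation of a point in the interior of $\mathcal{M}$ remains pseudoeffective).

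Finally, I would close via a standard convex-geometric identity. Writing $U := (\langle\theta\rangle - \mathcal{S}^\circ)\cap \mathcal{M}^\circ$ and $C := (\langle\theta\rangle - \mathcal{S})\cap \mathcal{M}$, the formula $\mathrm{ri}(B_1\cap B_2) = \mathrm{ri}(B_1)\cap\mathrm{ri}(B_2)$ (applied to $B_1 = \langle\theta\rangle - \mathcal{S}$ and $B_2 = \mathcal{M}$) identifies $U$ with $\mathrm{ri}(C)$. Since $A_\theta$ is convex with $U\subseteq A_\theta\subseteq C$, its affine hull coincides with that of $C$; thus $A_\theta^\circ\subseteq C$ is open in this affine hull, hence contained in $\mathrm{ri}(C) = U$, while the reverse inclusion $U\subseteq A_\theta^\circ$ is immediate from $U$ being open in $A_\theta$.

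The main technical obstacle will be verifying the non-degeneracy hypothesis needed to invoke $\mathrm{ri}(B_1\cap B_2) = \mathrm{ri}(B_1)\cap\mathrm{ri}(B_2)$, namely that $\mathrm{ri}(B_1)\cap\mathrm{ri}(B_2)\neq\emptyset$; this should reduce via bigness of $\langle\theta\rangle$ to checking that $\langle\theta\rangle\in\mathcal{M}^\circ$, and the degenerate cases (where $\mathcal{S}^\circ$ or $\mathcal{M}^\circ$ fails to meet $\langle\theta\rangle - \mathcal{S}^\circ$) can be disposed of separately, as both sides of the asserted equality then collapse to the same small set.
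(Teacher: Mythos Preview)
Your proposal is correct and follows essentially the same route as the paper, which gives only a one-line proof (``This is immediate from the definition and Proposition~\ref{Frasier}''). You have carefully filled in the details the paper glosses over: convexity of $A_\theta$ and the inclusion $A_\theta\subseteq(\langle\theta\rangle-\mathcal S)\cap\mathcal M$ are already asserted by the paper just before the corollary as ``easy to see,'' and the convex-geometric closing step (including the non-degeneracy hypothesis for $\mathrm{ri}(B_1\cap B_2)=\mathrm{ri}(B_1)\cap\mathrm{ri}(B_2)$) is left implicit in the paper. One minor point: your invocation of Lemma~\ref{mnef} for the $\mathcal M$-inclusion is unnecessary---the argument from Proposition~\ref{prim} that you cite uses only $\llbracket\langle T\rangle\rrbracket=0$ and \cite[Prop.~3.6]{Bo3}, not Lemma~\ref{mnef}.
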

\begin{proof}
This is immediate from the definition and Proposition \ref{Frasier}.
\end{proof}

Consider now $\overline{A_\theta}$ for a big class $[\theta]$ -- this is a compact set, and so has a minimal element under the partial ordering induced by $\mathcal E$ (n.b. it may have many, incomparable minima).

\begin{prop}
Any minima of $\overline{A_\theta}$ is deficient. Consequently, any minima of $\overline{A_\theta}$ are also minima of $A_\theta$.
\end{prop}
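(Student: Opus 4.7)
The plan is to argue by contradiction. Suppose $[\gamma]$ is a minimum of $\overline{A_\theta}$ that is not deficient; then there is a closed positive current $S \in [\gamma]$ with $\llbracket S\rrbracket \not= 0$, so the pluripolar Siu decomposition gives
\[
[\langle S\rangle] = [\gamma] - [\llbracket S\rrbracket] <_\mathcal{E} [\gamma]
\]
strictly, as $\llbracket S\rrbracket$ is a nonzero closed positive current. The goal is to exhibit $[\langle S\rangle] \in \overline{A_\theta}$, contradicting minimality.

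The core construction is the following: for any $T \in [\theta]$ with pluripolar Siu decomposition $T = \langle T\rangle + \llbracket T\rrbracket$, set
\[
T' := \langle S\rangle + \llbracket S\rrbracket + \llbracket T\rrbracket.
\]
This is closed positive of class $[\gamma] + [\llbracket T\rrbracket]$. A direct pluri-local computation then shows $\llbracket T'\rrbracket = \llbracket S\rrbracket + \llbracket T\rrbracket$, hence $\langle T'\rangle = \langle S\rangle$: indeed, $\llbracket S\rrbracket + \llbracket T\rrbracket$ is plurisupported and $\leq T'$, so $\leq \llbracket T'\rrbracket$ by Proposition \ref{char_singular_part}, while the reverse inequality follows from $\llbracket\langle S\rangle\rrbracket = 0$ (Proposition \ref{projection}), which forces $\langle S\rangle$ to put no mass on the pluripolar locus where the potential of $T'$ is $-\infty$. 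Now if $[\gamma] \in A_\theta$ we simply pick $T$ with $[\langle T\rangle] = [\gamma]$; then $T' \in [\theta]$ and $[\langle T'\rangle] = [\langle S\rangle] \in A_\theta$, the required contradiction.

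For $[\gamma] \in \overline{A_\theta}\setminus A_\theta$ one must approximate. The key observation is $[\langle S\rangle] \in \mathcal M$: a positive generic Lelong number of a potential of $\langle S\rangle$ along some divisor would force $\langle S\rangle$ to charge that pluripolar set, contradicting $\llbracket\langle S\rangle\rrbracket = 0$. Since $[\theta - \gamma] \in \overline{\mathcal S}$ (from $[\gamma] \in \overline{A_\theta}$), choose $[\eta_n]\in\mathcal S$ with $[\eta_n]\to[\theta - \gamma]$ and set
\[
[\alpha_n] := [\theta] - [\llbracket S\rrbracket] - [\eta_n],
\]
perturbed by a small K\"ahler class if needed to guarantee $[\alpha_n] \in \mathcal M\cap\mathcal E^\circ$. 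Then $[\alpha_n]\to[\langle S\rangle]$, and Proposition \ref{Frasier} together with Lemma \ref{mnef} supplies a non-pluripolar representative $R_n\in[\alpha_n]$ (take $R_n = V_{\alpha_n}$, which has $\llbracket V_{\alpha_n}\rrbracket = N(\alpha_n) = 0$). The current $T_n' := R_n + P_n + \llbracket S\rrbracket$, with $P_n$ plurisupported in $[\eta_n]$, lies in $[\theta]$ and, by the same pluri-local argument as above, satisfies $[\langle T_n'\rangle] = [\alpha_n]\to[\langle S\rangle]$, placing $[\langle S\rangle] \in \overline{A_\theta}$, as needed.

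For the ``consequently'' statement, once a minimum $[\gamma]$ is known deficient, any closed positive $S \in [\gamma]$ (existing by pseudoeffectivity) already satisfies $\langle S\rangle = S$, and the same perturbation construction realizes $[\gamma]$ as $[\langle T\rangle]$ for some $T \in [\theta]$; so every minimum of $\overline{A_\theta}$ lies in $A_\theta$ and is automatically a minimum there. The main obstacle is the perturbation step: arranging the sequence $[\alpha_n]$ to sit simultaneously in $\mathcal M\cap\mathcal E^\circ$ and have plurisupported complement $[\eta_n]$ in $[\theta-\llbracket S\rrbracket]$ requires careful use of the convexity of $\mathcal M$ (which contains $[\langle S\rangle]$) and of the fact that every element of $\overline{\mathcal S}$ can be approximated from $\mathcal S$.
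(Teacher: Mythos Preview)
Your argument for the case $[\gamma]\in A_\theta$ is clean and correct: given $T\in[\theta]$ with $[\langle T\rangle]=[\gamma]$, the current $T'=S+\llbracket T\rrbracket\in[\theta]$ has $\langle T'\rangle=\langle S\rangle$, and you verify this properly. The gap is in the approximation step for $[\gamma]\in\overline{A_\theta}\setminus A_\theta$, which you yourself flag as ``the main obstacle'' but do not actually resolve.

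The problem is that the two constraints on $[\alpha_n]$ are in tension. You need simultaneously (i) $[\alpha_n]\in\mathcal M\cap\mathcal E^\circ$ so that $V_{\alpha_n}$ has $\llbracket V_{\alpha_n}\rrbracket=0$, and (ii) $[\theta]-[\llbracket S\rrbracket]-[\alpha_n]=[\eta_n]\in\mathcal S$ so that the resulting current $T_n'$ lands in $[\theta]$. Perturbing $[\alpha_n]$ by a K\"ahler class breaks (ii): the class of $T_n'$ becomes $[\theta]+\delta_n[\omega]$, and there is no general relationship between $A_{\theta+\delta_n\omega}$ and $A_\theta$. Convexity of $\mathcal M$ does not help either: you only know $[\langle S\rangle]\in\mathcal M\cap\mathcal E$, possibly on the boundary of both, and the direction $[\theta-\gamma]-[\eta_n]$ along which $[\alpha_n]$ approaches $[\langle S\rangle]$ is dictated by $\mathcal S$, with no reason to point into $\mathcal M\cap\mathcal E^\circ$. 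Indeed $[\alpha_n]$ need not even be pseudoeffective.

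The paper sidesteps this entirely by never seeking a non-pluripolar current in an approximating class. Instead, given \emph{any} current $T\in[\alpha]$ (your $S\in[\gamma]$), it takes $S_i\in[\theta]$ with $[\langle S_i\rangle]\to[\alpha]$, and uses bigness of $[\theta]$ to find a small positive corrector $S_i'$ so that $R_i:=T+S_i'+\llbracket S_i\rrbracket$ lies in $(1+c_i)[\theta]$ with $c_i\to 0$. Then $\langle R_i\rangle=\langle T\rangle+\langle S_i'\rangle$, and since $[S_i']\to 0$ one gets $\tfrac{1}{1+c_i}[\langle R_i\rangle]\to[\langle T\rangle]\in\overline{A_\theta}$. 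The scaling trick is exactly what makes the cohomology bookkeeping work; your perturbation would be salvaged by the same device (scale by $[\theta]$ rather than add an unrelated K\"ahler class), but at that point you have reproduced the paper's argument.
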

\begin{proof}
Let $[\alpha]$ be a minima of $\overline{A_\theta}$ and suppose that $T\in[\alpha]$.

By the definition of $\overline{A_\theta}$, there exists a sequence of closed positive currents $S_i\in[\theta]$ such that:
\[
\lim_{i\rightarrow\infty} [\langle S_i\rangle] = [\alpha].
\]
Pick a sequence of positive numbers $c_i\rightarrow 0$ such that $[\alpha - \langle S_i\rangle]\leq c_i\langle\theta\rangle$, which we can do since $\langle \theta\rangle$ is big. Then there exist positive currents:
\[
S'_i\in c_i\langle\theta\rangle - [\alpha - \langle S_i \rangle],
\]
and we have:
\[
T + S'_i \in [\langle S_i\rangle] + c_i \langle\theta\rangle.
\]
Define $R_i\in(1+c_i)[\theta]$ by $R_i := T + S_i' + \llbracket S_i\rrbracket$, so that $\langle R_i\rangle = \langle T\rangle + \langle S_i '\rangle$. Since
\[
0\leq \lim_{i\rightarrow\infty}\langle S'_i\rangle \leq \lim_{i\rightarrow \infty} S'_i = 0,
\]
we have that:
\[
[\langle T\rangle] = \lim_{i\rightarrow\infty} \frac{1}{1+c_i}[\langle R_i\rangle] \in \overline{A_\theta}
\]
by definition. By definition, $[\alpha] - [\langle T \rangle] = [\llbracket T\rrbracket]$. Since $[\alpha]$ is a minima, we must have $\llbracket T\rrbracket = 0$, as desired.
\end{proof}

\begin{cor}\label{decomp}
\[
\mathcal E^\circ \subseteq \mathcal A + \mathcal S.
\]
Alternatively, any big class admits at least one decomposition into the sum of a deficient class and a plurisupported class.
\end{cor}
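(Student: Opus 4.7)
The plan is to extract the corollary directly from the two preceding propositions, which together show that $\overline{A_\theta}$ admits minimal elements that are deficient and actually lie in $A_\theta$ itself. Given $[\theta] \in \mathcal{E}^\circ$, i.e.\ $[\theta]$ big, I would argue as follows.

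First, by the discussion preceding the corollary, $\overline{A_\theta}$ is compact (it is a closed subset of the order-interval $\{[\alpha]\in\mathcal{E} : [\theta] - [\alpha]\in\mathcal{E}\}$, which is bounded in $H^{1,1}(X,\R)$ by pairing with $\omega^{n-1}$ for a fixed Kähler $\omega$). The partial order on $\overline{A_\theta}$ induced by $\mathcal{E}$ is therefore inductive downwards along chains (use compactness plus continuity of pairing against the Kähler class to extract a lower bound for any chain), so Zorn's lemma produces at least one minimal element $[\alpha] \in \overline{A_\theta}$. By the previous proposition, $[\alpha]$ is deficient and is in fact a minimum of $A_\theta$; hence there exists a closed positive current $T \in [\theta]$ with $[\langle T\rangle] = [\alpha]$.

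Taking the pluripolar Siu decomposition $T = \langle T\rangle + \llbracket T\rrbracket$ from Definition \ref{pSiuDecom_curr}, we get
\[
[\theta] = [\langle T\rangle] + [\llbracket T\rrbracket] = [\alpha] + [\llbracket T\rrbracket].
\]
Since $\llbracket T\rrbracket$ is plurisupported by construction, its class lies in $\mathcal{S}$ (interpreting $0 \in \mathcal{S}$ in the degenerate case $\llbracket T\rrbracket = 0$), while $[\alpha] \in \mathcal{A}$. This gives the desired decomposition $[\theta] \in \mathcal{A} + \mathcal{S}$.

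There is no real obstacle here, since the heavy lifting is already done by the preceding proposition. The only minor points requiring care are (i) invoking compactness of $\overline{A_\theta}$ to guarantee the existence of a minimal element — this could alternatively be done by running a descending sequence $[\alpha_k]$ with $\int_X \alpha_k\wedge\omega^{n-1}$ decreasing to the infimum and extracting a weakly convergent subsequence — and (ii) observing that zero is allowed on either side of the decomposition, so the argument also covers the edge cases where $[\theta]$ is itself deficient or itself plurisupported.
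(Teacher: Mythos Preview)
Your argument is correct and is exactly the implicit proof the paper has in mind: the corollary is stated without proof because it follows immediately from the preceding proposition (minima of $\overline{A_\theta}$ are deficient and lie in $A_\theta$) together with the compactness of $\overline{A_\theta}$ asserted just before it. Your use of Zorn's lemma to extract a minimal element merely spells out the step the paper leaves implicit.
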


\begin{cor}
If $X$ has no non-trivial deficient classes, then every big class admits a plurisupported current.
\end{cor}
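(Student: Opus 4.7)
The plan is to deduce this corollary directly from Corollary \ref{decomp} above. By hypothesis, the cone $\mathcal{A}$ of deficient classes is trivial: $\mathcal{A} = \{0\}$. Applying Corollary \ref{decomp}, for any big class $[\theta] \in \mathcal{E}^\circ$, we obtain a decomposition
\[
[\theta] = [\alpha] + [\beta], \quad [\alpha] \in \mathcal{A},\ [\beta] \in \mathcal{S}.
\]
Since $\mathcal{A} = \{0\}$, we must have $[\alpha] = 0$, hence $[\theta] = [\beta] \in \mathcal{S}$, which by definition means that $[\theta]$ admits a plurisupported current.

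Since essentially the entire argument is contained in Corollary \ref{decomp}, there is no real obstacle here; the only subtlety worth noting is that $\mathcal{A}$ being a cone, ``no non-trivial deficient classes'' should be interpreted as $\mathcal{A} = \{0\}$, which is indeed the contrapositive of the existence of any $[\alpha] \neq 0$ in $\mathcal{A}$. One could, if desired, phrase the proof slightly more symmetrically by noting that $\mathcal{S}$ is a cone, so the inclusion $\mathcal{E}^\circ \subseteq \mathcal{S}$ follows directly, without the need to choose any specific representative $T \in [\theta]$.
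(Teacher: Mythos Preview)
Your proof is correct and is exactly the intended one: the paper states this corollary without proof, as it is an immediate consequence of Corollary \ref{decomp}, and you have spelled out precisely that deduction.
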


As suggested before, it is unclear if such manifolds even exist, and even if they did, verifying this condition seems particularly challenging without further tools.


\subsection{Currents with Maximal Singularity Type}


We discuss a starting point to a possible alternate approach to determine the existence of plurisupported currents. Inspired by Corollary \ref{ordering}, we make the following definition:

\begin{defn}
Suppose that $[\theta]$ is a pseudoeffective $(1,1)$-class. We say that a function $\vp \in\PSH(X,\theta)$ has {\bf maximal singularity type} if there does not exist any $\psi\in\PSH(X,\theta)$ such that $[\psi]$ is strictly more singular than $[\vp]$. We employ similar terminology for the corresponding current $\theta_\vp$.
\end{defn}

A simple application of Lemma \ref{subtraction} shows that if $T\in[\theta]$ is plurisupported, then $T$ has maximal singularity type in $[\theta]$; hence, one possible idea to determine if/when plurisupported currents exist is to try and characterize currents with maximal singularity type. Unfortunately, it is not clear that currents with maximal singularity type exist either -- hence, we make another definition:

\begin{defn}
Suppose that $[\theta]$ is a pseudoeffective $(1,1)$-class. We say that a function $\vp \in\PSH(X,\theta)$ has {\bf maximal model singularity type} if it has model singularity type, in the sense of \cite{DDL4} and if further there does not exist any $\psi\in\PSH(X,\theta)$ with model singularity type, such that $[\psi]$ is strictly more singular than $[\vp]$. We employ similar terminology for the corresponding current $\theta_\vp$.
\end{defn}

It is easy to see that plurisupported $\vp$ also have model singularity type (Proposition \ref{ignore}), and are hence also maximally model (in general though, it is unclear if the notions of maximal singularity type and maximal model singularity type should agree).

The reason for making this alternate definition is that one can show that maximal model singularity types always exist in big classes, thanks to \cite{DDL4}:

\begin{prop}
Suppose that $[\theta]$ is a big cohomology class. Then there exists some $\vp\in\PSH(X,\theta)$ with maximal model singularity type.
\end{prop}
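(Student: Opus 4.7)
\emph{Proof Plan.} The natural approach is Zorn's lemma applied to the poset $\mathcal{M}_\theta$ of model singularity types in $\PSH(X,\theta)$, ordered by the singularity order (so $[\psi] \geq [\vp]$ when $\psi$ is at least as singular as $\vp$). A maximal element of $\mathcal{M}_\theta$ is precisely a maximal model singularity type, so the task reduces to showing that every totally ordered chain has an upper bound, i.e., a model singularity type at least as singular as every chain element.

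Given a chain $\{[\vp_\alpha]\}_{\alpha\in I}$, I would first extract a countable cofinal subchain (using separability of $d_\mathcal{S}$ on the space of model singularity types, as in \cite{DDL4}), and then inductively choose representatives so that $\vp_1 \geq \vp_2 \geq \cdots$ pointwise on $X$. The crux is to show that the pointwise decreasing limit $\vp := (\lim_j \vp_j)^{*}$ is not identically $-\infty$. Here bigness of $[\theta]$ is essential: the minimal-singularity potential $V_\theta$ exists and has small unbounded locus by Demailly's regularization theorem, so on a suitable dense open set all $\vp_j$ admit a uniform upper bound coming from $V_\theta$. After normalizing the representatives appropriately (for instance, aligning their suprema on this locus), one can choose $x_j \in X$ approximately attaining $\sup \vp_j$ and extract a subsequential limit $x_j \to x_*$; upper semicontinuity together with the monotonicity $\vp_i(x_j) \geq \vp_j(x_j)$ for $j \geq i$ then forces $\vp(x_*) > -\infty$.

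Once $\vp \in \PSH(X,\theta)$ is established, the model envelope $\mathcal{C}_\theta(\vp)$ (equivalently $P_\theta[\vp]$, compare Proposition \ref{ignore}) is a model singularity type with $[\mathcal{C}_\theta(\vp)] \geq [\vp] \geq [\vp_j]$ for every $j$, providing the required upper bound. The main obstacle is precisely the non-triviality of the limit: for general pseudoeffective (non-big) classes, a chain of increasingly singular model potentials can degenerate entirely, and it is the existence of $V_\theta$ with small unbounded locus --- a hallmark of bigness --- that rules this out. A secondary technical point is verifying that the upper-semicontinuous regularization and the model-envelope operation are compatible enough that $\mathcal{C}_\theta(\vp)$ is genuinely an upper bound for the entire original chain; this should follow from the behavior of $\mathcal{C}_\theta$ under decreasing limits established in \cite{DDL4}.
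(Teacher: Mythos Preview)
Your approach is essentially the paper's: Zorn's lemma on model singularity types, with chains handled by passing to a decreasing sequence of representatives, taking the limit, and invoking \cite{DDL4} to see that the limit is again model. The one substantive difference is that you overcomplicate the step showing the limit is not identically $-\infty$. The paper's key observation is that the canonical model representative $\mathcal{C}_\theta(\vp)$ (equivalently $P_\theta[\vp]$) automatically satisfies $\sup_X \mathcal{C}_\theta(\vp) = 0$, so if $[\vp_i]$ is an increasing chain of model types, the functions $\mathcal{C}_\theta(\vp_i)$ are \emph{already} a pointwise decreasing, sup-normalized sequence in $\PSH(X,\theta)$; standard $L^1$-compactness of sup-normalized $\theta$-psh functions then gives the limit in $\PSH(X,\theta)$ immediately, and \cite[Cor.~4.7]{DDL4} shows it is model. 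There is no need to invoke the small unbounded locus of $V_\theta$, to renormalize by hand, or to run the upper-semicontinuity argument with points $x_j\to x_*$ --- and in fact your proposed renormalization is in tension with having already chosen the $\vp_j$ pointwise decreasing. Relatedly, your diagnosis of where bigness enters is off: the non-triviality of the decreasing limit does not require bigness (sup-normalized compactness holds for any pseudoeffective class); bigness is used instead to access the results of \cite{DDL4} on model types and $d_\mathcal{S}$.

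On the other hand, you are right to flag the reduction to countable chains, which the paper's proof leaves implicit by only treating sequences. One clean way to handle an arbitrary chain is to note that the model representatives $\{\mathcal{C}_\theta(\vp_\alpha)\}$ form a pointwise decreasing net of sup-normalized $\theta$-psh functions, so $(\inf_\alpha \mathcal{C}_\theta(\vp_\alpha))^*\in\PSH(X,\theta)$ by the same compactness, and is more singular than every $\vp_\alpha$; your appeal to separability of $d_\mathcal{S}$ is not obviously available and is in any case unnecessary.
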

\begin{proof}
By \cite[Thrm. 3.3]{DDL4}, the classes in $(\mathcal{S}, d_\mathcal{S})$ are in one-to-one correspondence with the $\vp\in\PSH(X,\theta)$ with model singularity type. It is also clear that $[\vp] \leq [\psi]$ if and only if we have the pointwise inequality $\mathcal{C}_\theta(\vp) \geq \mathcal{C}_\theta(\psi)$. Moreover, we also clearly have $\sup_X \mathcal{C}_\theta(\vp) = 0$ for any $[\vp]$. 

Thus, if $[\vp_i]$ is a sequence of singularity types such that $[\vp_i] \leq [\vp_{i+1}]$, we see that the sequence $\mathcal{C}_\theta(\vp_i)$ is a decreasing sequence of sup-normalized $\theta$-psh functions, and hence has a limit $\vp\in\PSH(X,\theta)$. By \cite[Cor. 4.7]{DDL4}, $\vp = \mathcal{C}_\theta(\vp)$, and clearly $[\vp_i] \leq [\vp]$ for any $i$. Hence, by Zorn's Lemma, we conclude there exists at least one maximally model singularity type in $(\mathcal{S}, d_\mathcal{S})$, as claimed.

\end{proof}

Unfortunately, characterizing maximally model singularity types appears to be quite difficult. For instance, one naive idea is to ask if they can be characterized by the vanishing of their non-pluripolar product; an easy example on $\CP^1\times\CP^1$ shows this is false.

\begin{exmp}
Let $x\in \CP^1$ and $\psi_x$ be the unique $\omega_{FS}$-metric with $\omega_{FS} + i\p\pbar\psi_x = \delta_x$. Let $\pi_1, \pi_2$ be the natural projections from $\CP^1\times\CP^1$. Then $\omega := \pi_1^*\omega_{FS} + \pi^*_2\omega_{FS}$ is a K\"ahler form, and we see:
\[
\int_{\CP^1\times\CP^1} \langle (\omega + i\p\pbar\pi_1^*\psi_x)^2\rangle = \int_{\CP^1\times\CP^1} \pi_2^*\omega_{FS}^2 = 0.
\]
But $\pi_1^*\psi_x$ is clearly not maximally singular, as, for instance, $\pi_1^*\psi_x + \pi^*_2\psi_x\in\PSH(\CP^1\times\CP^1, \omega)$ is more singular than it.
\end{exmp}

Thus, there can be whole hierarchies of singularity types with no Monge-Amp{\`e}re mass. Already studying such singularity types has proven to be quite difficult, since many of the standard tools and techniques in pluripotential theory fail to provide useful information without the positive mass assumption.

\end{document}